\documentclass[11pt]{article}
\usepackage{amsmath, latexsym, amsfonts, amssymb, amsthm, amscd}
\usepackage{amsmath}
\usepackage{mathtools} 
\usepackage[left=2.5cm, right=2.5cm, top=2.5cm, bottom=2.5cm]{geometry}

\usepackage{upquote} 
\usepackage{color}
\usepackage{setspace} 
\usepackage[labelfont=bf]{caption} 
\usepackage{stmaryrd} 
\usepackage{multicol} 
\usepackage[dvipsnames]{xcolor}
\usepackage{lipsum}
\usepackage{kpfonts} 
\usepackage{dsfont} 
\usepackage{url}
\usepackage[english]{babel} 
\usepackage[utf8]{inputenc}
\usepackage[T1]{fontenc}
\usepackage{enumerate}
\usepackage{graphicx}
\usepackage{color,soul}
\usepackage{comment}
\usepackage[toc,page]{appendix}
\usepackage[mathlines]{lineno}
\usepackage{etoolbox} 
\newcommand*\linenomathpatch[1]{%
	\cspreto{#1}{\linenomath}%
	\cspreto{#1*}{\linenomath}%
	\csappto{end#1}{\endlinenomath}%
	\csappto{end#1*}{\endlinenomath}%
}
\newcommand*\linenomathpatchAMS[1]{%
	\cspreto{#1}{\linenomathAMS}%
	\cspreto{#1*}{\linenomathAMS}%
	\csappto{end#1}{\endlinenomath}%
	\csappto{end#1*}{\endlinenomath}%
}
\expandafter\ifx\linenomath\linenomathWithnumbers
\let\linenomathAMS\linenomathWithnumbers
\patchcmd\linenomathAMS{\advance\postdisplaypenalty\linenopenalty}{}{}{}
\else
\let\linenomathAMS\linenomathNonumbers
\fi

\linenomathpatch{equation}
\linenomathpatchAMS{gather}
\linenomathpatchAMS{multline}
\linenomathpatchAMS{align}
\linenomathpatchAMS{alignat}
\linenomathpatchAMS{flalign}

\usepackage{bm} 

\usepackage{enumitem} 

\usepackage{xcolor}
\definecolor{Maroon}{HTML}{ad2231}
\definecolor{webgreen}{HTML}{008000}
\usepackage{dsfont}
\usepackage{hyperref}
\usepackage{lipsum}
\hypersetup{colorlinks, breaklinks, urlcolor=black, linkcolor=black, citecolor=webgreen} 
\allowdisplaybreaks
\usepackage{marginnote}

\usepackage{amsthm}
\newtheorem{theorem}{Theorem}

\newtheorem{proposition}[theorem]{Proposition}
\newtheorem{lemma}[theorem]{Lemma}
\newtheorem{remark}[theorem]{Remark}

\theoremstyle{definition}

\usepackage{todonotes}

\usepackage{chngcntr}
\usepackage{apptools}
\newcommand{\email}[1]{\gdef\@email{\url{#1}}}

\newcommand{\N}{\mathbb{N}}

\newcommand{\R}{\mathbb{R}}
\newcommand{\W}{\mathbb{W}}
\newcommand{\Z}{\mathbb{Z}}

\newcommand{\PP}{\mathbb{P}}
\newcommand{\E}{\mathbb{E}}

\DeclareMathAlphabet{\mathpzc}{OT1}{pzc}{m}{it}
\newcommand{\ee}{\textnormal{e}}
\newcommand{\dd}{\textnormal{d}}
\newcommand{\jj}{\textnormal{j}}

\newcommand{\1}{\mathbf{1}}

\newcommand{\wt}{\widetilde}
\newcommand{\ovl}{\overline}

\newcommand{\eqsp}{\;\;\;\;}
\makeatletter
\def\widebreve{\mathpalette\wide@breve}
\def\wide@breve#1#2{\sbox\z@{$#1#2$}%
	\mathop{\vbox{\m@th\ialign{##\crcr
				\kern0.08em\brevefill#1{0.8\wd\z@}\crcr\noalign{\nointerlineskip}%
				$\hss#1#2\hss$\crcr}}}\limits}
\def\brevefill#1#2{$\m@th\sbox\tw@{$#1($}%
	\hss\resizebox{#2}{\wd\tw@}{\rotatebox[origin=c]{90}{\upshape(}}\hss$}
\makeatletter
\allowdisplaybreaks
\begin{document}
		\title{Poissonian potential measures  for refracted-reflected L\'evy processes}
	\author{
		Noah Beelders\footnote{Department of Mathematical Sciences,
			University of Liverpool, \texttt{N.Beelders@liverpool.ac.uk}}, \,   \, Lewis Ramsden\footnote{School for Businesses and Society, Univeristy of York, \texttt{lewis.ramsden@york.ac.uk}} \;   \;  \&
		\, Apostolos D. Papaioannou\footnote{Department of Mathematical Sciences,
			University of Liverpool, \texttt{papaion@liverpool.ac.uk}}   \;
	}
	
	\maketitle
	
	\tableofcontents
	
		\vspace{0.1in}
	
	\begin{abstract} 
		In this paper we study  the potential measures and the Laplace transforms of the occupation times of a refracted-reflected spectrally negative  L\'evy process when the process is observed at the arrival epochs of two independent Poisson processes. In this case, the rates of observing the underlying process differ in time which deviates from the classical theory of Poissonian observations.  Explicit expressions for the so-called Poissonian potential measures and the Poissonian  occupation times are derived  in terms of (known) scale  functions.  Other fluctuation identities  are also derived. 
	\end{abstract}
	
	\noindent  {\sc Keywords}: Refracted-reflected   L\'evy processes, Potential measures, Occupation times, Fluctuation theory, Scale functions.  
	
	
	\section{Introduction}
	The refracted-reflected L\'evy process  can be described  as a combination  of a L\'evy process reflected at a lower boundary  and  a refracted L\'evy process. The former can be expressed as the difference between the underlying L\'evy process and its running infimum (see \cite{B1996, K2014, P2004}), and the latter as the underlying process below a fixed level and a drift changed process above this level (see \cite{KL2010}). Refracted-reflected L\'evy processes have several applications in applied probability, queuing theory and insurance mathematics. For example, they are used for the bail-out model in the presence of dividends, where the surplus is a L\'evy process for which dividends are paid at a fixed level and capital is injected to prevent the process going below 0 (see for e.g. \cite{ JMPY2019, PY2017, PY2018}). 
	
	Recently, motivated by insurance mathematics  and  reliability theory problems, developments in fluctuation theory have been made when the process is monitored at Poissonian time epochs (that are independent from the process). The applicability  of these Poissonian observations  can be found, for example,  in the insurance context where  the ruin time  or the time of dividend payments do not occur at the exact times the surplus process crosses a certain level (for instance 0), but rather at some time afterwards which more closely reflects the behaviour of financial institutions, see \cite{ACT2011, ACT2013,   LRZ2011}.  
	In the L\'evy context, Poissonian observations were first introduced by \cite{AIZ2016}, in which explicit expressions for the exit problems were derived, whilst  in \cite{LLWX2018} explicit expressions for the Poissonian potential measures were established. Poissonian observations are also relevant  in queueing contexts, see for example \cite{BBR2009}. 
	
	Potential measures are known to play a fundamental role in fluctuation theory of L\'evy processes (both under continuous and Poissonian observation frameworks), see for example \cite{KL2010,P2005} to mention a few. Surprisingly, analytic expressions for the   potential measures and the Laplace transforms of the occupation times  for the  refracted-reflected L\'evy process under Poissonian observations do not exist in the literature. 
	
	In this paper, we derive explicit expressions in terms of the so-called scale functions (see Section \ref{prel}) for the Poissonian potential measures and the occupation times for  a refracted-reflected   spectrally negative L\'evy process when the underlying process  is observed at the arrival epochs of two independent Poisson processes under the assumption that the rates of these observations differ. It will be seen that these derived expressions generalise those from the models in \cite{ACT2011, ACT2013, AIZ2016, LLWX2018}, and yield analogous forms to the Poissonian observation fluctuation identities in the literature. 
	With regards to the applicability of such a model,  the observation structure  yields a more accurate representation of reality in problems arising in insurance, queueing systems or finance: (i) In insurance, the proposed process can be used as a risk model  with dividends and capital injections for an insurance firm, in which the different Poissonian observations represent two (independent) observers that check the surplus solvency - a typical example in practice being the compliance officers (within the firm) and the regulators (outside the firm). This mechanism of multiple independent observers extends the current literature, see \cite{ACT2011, ACT2013, LLWX2018}.  (ii) In queuing systems,  the  proposed model is used  in systems where the workload is a L\'evy driven fluid queue (see \cite{KM2015b}, \cite{PVZ2014}) with state dependent services and non-negative constraints. Indeed, the Poisson clocks represent two different observers  looking for  inspection or demand  events  and the  restocking or service activation.  (iii) In finance, for a Lévy-driven asset, where  one clock may represent a liquidation agent who monitors the asset for distress signals and the other a regulatory authority conducting periodic compliance checks.  
	
	The paper is organised as follows. In Section \ref{prel}, we review the theory of scale functions and develop new simplifying formulae between known scale functions as well as some results that will be used in the subsequent sections and potential future works. In Section \ref{main},  we compute the two-sided  killed Poissonian potential measures, which we use in Section \ref{Sec:LimitsofMainTheorems} to provide expressions for the  one-sided killed Poissonian potential measures. Finally, in Section \ref{sec:occup}, we compute the Laplace transform of the Poissonian occupation times.

	\section{Preliminaries on scale functions}
	\label{prel}
	Let $X = \{X_{t}\}_{t \geq 0}$ be a spectrally negative L\'evy process (SNLP) defined on  $(\Omega, \mathcal{F}, \{\mathcal{F}_{t}\}_{t \geq 0}, \mathbb{P})$, where the filtration $\{\mathcal{F}_{t}\}_{t \geq 0}$ is assumed to satisfy the usual assumptions of right-continuity and completion. We shall denote $\PP_x$ to be the probability measure given that the process under consideration starts at $x$,  and  by  $\E_x$ the corresponding  expectation. We denote by $\PP$ and $\E$ the case when $x=0$. A L\'evy process with no positive jumps (the case of monotone paths is excluded) has a Laplace exponent  $\psi(\vartheta):[0,\infty)\rightarrow \mathbb R$ such that $	\psi(\vartheta) := \log \E[e^{\vartheta X_1}]$,  $\forall \vartheta\geqslant 0$, where $ \psi(\vartheta) = \mu\vartheta + \frac{\vartheta^2 \sigma^2}{2} + 
	\int_{(-\infty, 0)}^{} (e^{\vartheta x} -1- \vartheta x \mathbf{1}_{\{x >- 1\}}) \nu(\mathrm{d}x)$, 
	with  $\mu \in \R$, $\sigma \geq 0$ 
	and $\nu$, the L\'evy measure, is a $\sigma$-finite measure 
	concentrated on $(-\infty,0)$ satisfying $\int_{(-\infty,0)}(1 \wedge |x|^2) \nu(\mathrm{d}x)  <\infty$. 
	
	It is well-known that the fluctuation theory for $X$ relies heavily on the so-called $W$ and $Z$ \emph{scale functions} (see Chapter 8 in \cite{K2014}). For any $q\geq 0$, define   the  so-called scale functions  $W^{(q)}: \R \to [0, \infty)$ and $Z^{(q)}: \mathbb{R} \rightarrow[1, \infty)$ as 
	\begin{equation}
		\int_{0}^{\infty} e^{-\vartheta x}W^{(q)}(x)\mathrm{d}x = \frac{1}{\psi_q(\vartheta)}, \quad  \vartheta > \Phi_q, \quad \text{and}\quad  Z^{(q)}(x)=1+q \int_0^x W^{(q)}(y) \mathrm{d} y, 
		\label{eq:LTofscaleW}
	\end{equation}
	where $\Phi_q:= \sup \{ \vartheta \geq 0: \psi(\vartheta) = q\}$, $\psi_q(\vartheta) := \psi(\vartheta) -q$ and $W^{(q)}(x)=0$ for $x<0$. In the rest of the paper, we write $W$ or $\psi$ instead of $W^{(0)}$ or $\psi_0$ for convenience. We define also the  bivariate generalisation $Z^{(q)}: \mathbb{R} \times [0,\infty) \rightarrow[1, \infty)$  having the forms 
	\begin{equation} 	Z^{(q)}(x,\vartheta)=\ee^{\vartheta x} \Bigl(1-\psi_q(\vartheta)  \int_0^x \ee^{-\vartheta y}W^{(q)}(y) \mathrm{d} y \Bigr)=\psi_q(\vartheta)  \int_0^\infty \ee^{-\vartheta y}W^{(q)}(x+y) \mathrm{d} y, \quad x \geq 0, \label{eq:GeneralisedZFunc}
	\end{equation}
	\color{black} 
	where $Z^{(q)}(x,0) = Z^{(q)}(x)$ and $Z^{(q)}(x,\vartheta) = \ee^{\vartheta x}$ for $x \leq 0$.
	With regards to the limits of scale functions, it is well-known (see, for instance, Eqs.~(2.12) and (2.13) in \cite{LLWX2018}) that 
	\begin{equation}
		\lim_{a \rightarrow \infty} \frac{W^{(q)}(a+x)}{W^{(q)}(a)} = e^{\Phi_q x}, \; \quad \lim_{a \rightarrow \infty} \frac{Z^{(q)}(a)}{W^{(q)}(a)} = \frac{q}{\Phi_q}. \label{eq:LimitofRatioofScaleFuncs}
	\end{equation}
	In addition, the following useful identities for convolutions of the scale functions will be used throughout the paper. For any $p, q, x \geq 0$ and $p \neq q$, it holds that  (see \cite{LLWX2018} or \cite{LRZ2014})   
	\begin{align} 
		(p-q)\int_0^x W^{(p)}(x-y) W^{(q)}(y) \mathrm{d} y&=W^{(p)}(x)-W^{(q)}(x),  \label{eq:LoeffenLTIdentity1} \\
		(p-q)\int_0^x W^{(p)}(x-y) Z^{(q)}(y,\vartheta) \mathrm{d} y&=Z^{(p)}(x;\vartheta)-Z^{(q)}(x,\vartheta) .	\label{eq:LoeffenLTIdentity2} 
	\end{align}  
	
	\noindent Occupation time fluctuation identities rely on some more general scale functions, see  \cite{LRZ2011, LRZ2014}, namely, for $p, p+q \geq 0$ and $u, x \in \mathbb{R}$, we  define 	
		\begin{align} 
			\overline{W}_u^{(p, q)}(x)  := & \; W^{(p+q)}(x)-q \int_0^u W^{(p+q)}(x-y) W^{(p)}(y) \mathrm{d} y=   W^{(p)}(x)+q \int_u^x W^{(p+q)}(x-y) W^{(p)}(y) \mathrm{d} y,  \label{eq:SecondGenScaleFunc1}		
			\\
			\label{eq:SecondGenScaleFunc2}
			\overline{Z}_u^{(p, q)}(x)  := & \;  Z^{(p+q)}(x)-q \int_0^u W^{(p+q)}(x-y) Z^{(p)}(y) \mathrm{d} y 
			=    Z^{(p)}(x)+q \int_u^x W^{(p+q)}(x-y) Z^{(p)}(y) \mathrm{d} y. 
		\end{align}  
		Based on the above scale functions, the two-sided exit problem and the potential measure of an SNLP have the following forms. Let $\tau_{a,Z}^{+(-)}:=\inf \left\{t>0: Z_t>(<) \; a\right\}$,
		for  any process  $Z=\{Z_t\}_{t\geq 0}$ (by convention $\inf \varnothing = \infty$).  $Z$ may change in the rest of the paper, depending  on the underlying process used, without otherwise altering the notion of the stopping times.     Then, for  $x \leq a$, $a>0$  and $ q \geq 0$ (see Chapter 8 of \cite{K2014}), 
		\begin{align}
			&	\mathbb{E}_x\left(\mathrm{e}^{-q \tau_{a,X}^{+}} \mathbf{1}_{\left\{\tau_{a,X}^{+}<\tau_{0,X}^{-}\right\}}\right)=\frac{W^{(q)}(x)}{W^{(q)}(a)}, \quad  \mathbb{E}_x\left(\mathrm{e}^{-q \tau_{0,X}^{-}} \mathbf{1}_{\left\{\tau_{0,X}^{-}<\tau_{a,X}^{+}\right\}}\right)=Z^{(q)}(x) - \frac{W^{(q)}(x)}{W^{(q)}(a)}Z^{(q)}(a), \label{eq:ClassicalExitfromAbove}\\
			&	\mathbb{E}_x \Bigl( \int_0^{\infty} \mathrm{e}^{-q t} \1_{\{X_t \in \mathrm{d} y, t<\tau_{a,X}^{+} \wedge \tau_{0,X}^{-}\}} \mathrm{d} t \Bigr) = \Bigl( \frac{W^{(q)}(x) }{W^{(q)}(a)}W^{(q)}(a-y)-W^{(q)}(x-y) \Bigr) \, \mathrm{d} y, \quad  x, y \in[0, a].  \label{eq:ClassicalKilledPotential}
		\end{align}
		
		\noindent \textbf{Reflected SNLP}. The \emph{SNLP reflected at the lower boundary} 0,  is a strong Markov process defined by 
		\begin{equation}
			\wt{X}_t := X_t + \sup_{0 \leq s \leq t}\left(-X_s\right) \vee 0, \quad t \geq 0, \notag \label{eq:Regulator-ReflectionfromBelow}
		\end{equation}
		and has values on $[0,\infty)$ since the  supremum pushes the process upwards whenever it attempts to down-cross 0, see also in \cite{B1996, K2014}. A typical  application of the  reflected SNLP is for the so-called bail-out model, in which capital is injected to prevent the surplus of a firm from going below 0. 
		
		For the fluctuation identities of $\widetilde{X}$, we have (see Theorem 1 (i) and Proposition 2 (i) of \cite{P2004}) for $q,b \geq 0$, any Borel set $B$ and $x,y \in [0,b]$,
		\begin{equation}\label{eq:ReflectedBelowExitfromAbove}
			\mathbb{E}_x\left(e^{-q \tau_{b,\widetilde{X}}^{+}} \1_{\{\tau_{b,\widetilde{X}}^{+} < \infty \}}\right)=\frac{Z^{(q)}(x)}{Z^{(q)}(b)}, 
			\; \; \; 
			\mathbb{E}_x\Bigl(\int_0^{\tau_{b,\widetilde{X}}^{+}} e^{-q t} \1_{\{\wt{X}_t \in B\}} \mathrm{d} t\Bigr)= \int_B \Bigl( \frac{Z^{(q)}(x)}{Z^{(q)}(b)} {W}^{(q)}(b-y)-{W}^{(q)}(x-y) \Bigr) \dd y.  
		\end{equation}
		For a complete introduction to reflected L\'evy processes, the reader is pointed to Chapter 6 in \cite{B1996} and Chapter 8 in \cite{K2014}.
		
		\vspace{2ex}
		
		\noindent \textbf{Refracted SNLP}. The \emph{refracted SNLP} is a variant of an SNLP reflected from above, and was first introduced in \cite{KL2010}. Informally speaking, a linear drift at rate $\delta>0$ is subtracted from the increments of the underlying SNLP $X$ whenever it exceeds a pre-specified (fixed) positive level $b>0$.  Due to this characteristic, the refracted SNLP is often used to model dividend problems, where dividends are paid on a constant rate $\delta>0$, see for e.g.~\cite{JMPY2019, KLP2012}. More formally, it is the unique strong solution to the stochastic differential equation given by
		\begin{equation*}
			V_t = X_t-\delta \int^t_0 1_{\left\{V_s>b\right\}} \mathrm{d} s, \quad t \geq 0. \label{eq:RefractedLevySDE}
		\end{equation*}
		When deriving its fluctuation identities, it is important that the \emph{drift changed process} 
		$		Y_t:=X_t-\delta t$,  $t \geq 0$, 
		is again an SNLP that is not the negative of a subordinator. Hence, in \cite{KLP2012} and \cite{KL2010}, the standing assumption 
		\begin{enumerate}
			\item[] \textbf{(H)} $\quad 0< \delta<c, \; c=\mu+\int_{(-1,0)} |x|\nu(\dd x),\quad$ if $X$ has paths of bounded variation, 
		\end{enumerate}
		is imposed to show that  a unique strong solution of Eq.~\eqref{eq:RefractedLevySDE} exists, for which the unbounded variation solution is solved by using so-called strong approximation (see Theorem 1 in \cite{KL2010} for details).  
		We will denote the scale functions of $Y$ as $\mathbb{W}^{(p)}$ and $\mathbb{Z}^{(p)}$ for each $p \geq 0$ ($\overline{\W}^{(p,q)}$ and $\overline{\Z}^{(p,q)}$ for $p+q \geq 0$) which are to be interpreted as the counterparts of the scale functions ${W}^{(p)}$ and ${Z}^{(p)}$ (resp.~$\overline{W}^{(p,q)}$ and $\overline{Z}^{(p,q)}$) associated with $X$. Observe also that $\W^{(q)}(0) = (c-\delta)^{-1}$ for $Y$ of bounded variation and $\W^{(q)}(0) = 0$ for $Y$ of unbounded variation. Additionally, if $\delta = 0$, then $Y = X$ which yields that $\W^{(p)} = W^{(p)}$ ($\overline{\W}^{(p,q)} = \overline{W}^{(p,q)}$) and similarly for $\Z^{(p)}$ ($\overline{\Z}^{(p,q)}$). 
	Furthermore, for $q \geq 0$, the Laplace exponent of $Y$ will be denoted as $\psi_q^*(\vartheta):=\psi(\vartheta)-\delta \vartheta -q $, with a corresponding right-inverse $\varphi_q=\sup \left\{\vartheta \geq 0: \psi_0^*(\vartheta)=q\right\}$. Then, from \cite{KL2010}, we have the following fluctuation identities for the refracted L\'evy process.
		For $a,q \geq 0$, any Borel set $B$, $x,b \in [0,a]$ and $y \geq 0$,
		\begin{equation}
			\mathbb{E}_x\Bigl(\int_0^{\infty} e^{-q t} 1_{\left\{V_t \in B, t < \tau_{0,V}^{-} \wedge \tau_{a,V}^{+}\right\}} \mathrm{d} t\Bigr) = \int_B \Bigl( \frac{w^{(q)}(x)}{w^{(q)}(a)} w^{(q)}(a;y) -  w^{(q)}(x;y)\Bigr) \dd y, \label{eq:Refracted-KilledPotential}
		\end{equation}
		where 
		\begin{equation}
			w^{(q)}(x;y) :=  \Bigl(W^{(q)}(x - y) + \delta \int^x_b \W^{(q)}(x-u) W^{(q)\prime}(u-y) \dd u \Bigr) \1_{\{y \in [0,b)\}} + \W^{(q)}(x-y) \1_{\{y \in [b,\infty)\}},  \label{eq:RefractedWScaleFunc}
		\end{equation}
		and for which we have used the convention that $w^{(q)}(x) := w^{(q)}(x;0)$. Moreover,
		\begin{equation}
			\mathbb{E}_x\Bigl( e^{-q \tau_{a,V}^{+}} 1_{ \left\{\tau_{a,V}^{+} < \tau_{0,V}^{-} \right\}}\Bigr) = \frac{w^{(q)}(x)}{w^{(q)}(a)},
			\quad \mathbb{E}_x\Bigl( e^{-q \tau_{0,V}^{-}} 1_{ \left\{\tau_{0,V}^{-} < \tau_{a,V}^{+} \right\}}\Bigr) = z^{(q)}(x) - z^{(q)}(a) \frac{w^{(q)}(x)}{w^{(q)}(a)} , \notag  \notag
		\end{equation}
		where
		\begin{equation}
			z^{(q)}(x) := Z^{(q)}(x) + \delta q \int^x_b \W^{(q)}(x-u) W^{(q)}(u) \dd u. \label{eq:RefractedZScaleFunc}
		\end{equation}
		The refracted SNLP produces a collection of useful identities with regards to the convolutions of the scale functions $w^{(p)}$ and $z^{(p)}$ and    can be seen as generalisations of the identities from Eqs.~\eqref{eq:LoeffenLTIdentity1} and \eqref{eq:LoeffenLTIdentity2} but w.r.t.~the refracted process $V$. These, along with some fluctuation identities for the  downward crossings of the refracted process that will be used in the forthcoming, are presented in the following lemma (its  proof can be found in the Appendix). 
		\begin{lemma}\label{lem:RefractedConvolutionIdentities}
			Let $a \geq b \geq 0$. Then, for $p, p+q \geq 0$ and $x \in [0,a]$, we have the following:
			\begin{itemize}
				\item[\upshape{(i)}] For $u \in [0,b)$, \begin{equation}
					q \int_0^x \mathbb{W}^{(p+q)}(x-y) w^{(p)}(y;u) \mathrm{d} y = 	\W^{(p+q)}(x-u) - w^{(p)}(x;u) - \delta \int_{[0,b-u)} \W^{(p+q)}(x-u-y) W^{(p)} (\dd y), \label{eq:RefractedWConvolutionIdentity}
				\end{equation}
				where $W^{(p)}(\dd y)$ is the measure defined on $[0,\infty)$ associated with $W^{(p)}(a,b]:= W^{(p)}(b) - W^{(p)}(a)$ for $p \geq 0$ and $-\infty < a \leq b < \infty$ (see Chapter 8 of \cite{K2014} for more details).
				\item[\upshape{(ii)}] 	\begin{equation}
					q \int_0^x \mathbb{W}^{(p+q)}(x-y) z^{(p)}(y) \mathrm{d} y = 	\Z^{(p+q)}(x) - z^{(p)}(x) - \delta p \int^b_0 \W^{(p+q)}(x-y) W^{(p)} (y) \dd y. \label{eq:RefractedZConvolutionIdentity}
				\end{equation}
				
				\item[\upshape{(iii)}] Let $u \geq 0$ and define 
				\begin{equation}
					\ovl{w}_b^{(p,q)}(x;u) :=  w^{(p)}(x;u) +q \int_b^x \mathbb{W}^{(p+q)}(x-y) w^{(p)}(y;u) \mathrm{d} y, \label{eq:SecondGenFunc-RefractedWTilde1}
				\end{equation}
				with the convention that $\ovl{w}_b^{(p,q)}(x) = \ovl{w}_b^{(p,q)}(x;0)$. Then, we have also that
				\begin{equation}
					\begin{aligned}
						\ovl{w}_b^{(p,q)}(x;u) &= \W^{(p+q)}(x-u) -q \int_0^{b-u} \mathbb{W}^{(p+q)}(x-u-y) W^{(p)}(y) \mathrm{d} y \\
						&\eqsp - \delta \int_{[0,b-u)} \W^{(p+q)}(x-u-y) W^{(p)} (\dd y).
					\end{aligned} \label{eq:SecondGenFunc-RefractedWTilde2}
				\end{equation}
				
				\item[\upshape{(iv)}] Define
				\begin{equation}
					\ovl{z}_b^{(p,q)}(x) := z^{(p)}(x)+q \int_b^x \mathbb{W}^{(p+q)}(x-y) z^{(p)}(y) \mathrm{d} y. \label{eq:SecondGenFunc-RefractedZTilde1}
				\end{equation}
				Then, we have also that
				\begin{equation}
					\ovl{z}_b^{(p,q)}(x) =  \Z^{(p+q)}(x) -q \int_0^b \mathbb{W}^{(p+q)}(x-y) Z^{(p)}(y) \mathrm{d} y - \delta p \int^b_0 \W^{(p+q)}(x-y) W^{(p)} (y) \dd y. \label{eq:SecondGenFunc-RefractedZTilde2}
				\end{equation}
				\item[\upshape{(v)}]
				For $u \in [0,b]$,
				\begin{equation}
					\begin{aligned}
						{\E}_x \bigl(\mathrm{e}^{-(p+q) \tau_{b,Y}^{-} }\1_{\{ \tau_{b,Y}^{-}<\tau_{a,Y}^{+}\}} W^{(p)}(Y_{\tau_{b,Y}^{-}}-u) \bigr)=\ovl{w}_b^{(p,q)}(x;u)-\frac{\mathbb{W}^{(p+q)}(x-b)}{\mathbb{W}^{(p+q)}(a-b)} \ovl{w}_b^{(p,q)}(a;u),
					\end{aligned} \label{eq:W(q)JumpDownwardsinYDynamics}
				\end{equation}
				and
				\begin{equation}
					\begin{aligned}
						{\E}_x \bigl(\mathrm{e}^{-(p+q) \tau_{b,Y}^{-} }\1_{\{ \tau_{b,Y}^{-}<\tau_{a,Y}^{+}\}} Z^{(p)}(Y_{\tau_{b,Y}^{-}}) \bigr)=\ovl{z}_b^{(p,q)}(x) -\frac{\mathbb{W}^{(p+q)}(x-b)}{\mathbb{W}^{(p+q)}(a-b)} \ovl{z}_b^{(p,q)}(a), 
					\end{aligned} \label{eq:Z(q)JumpDownwardsinYDynamics}
				\end{equation}
				where $\ovl{w}_b^{(p,q)}$ and $	\ovl{z}_b^{(p,q)}$ are given in Eqs.~\eqref{eq:SecondGenFunc-RefractedWTilde1} and \eqref{eq:SecondGenFunc-RefractedZTilde1}, respectively.
			\end{itemize}
		\end{lemma}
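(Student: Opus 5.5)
The natural tool throughout is the Laplace transform in the space variable, combined with the two convolution identities \eqref{eq:LoeffenLTIdentity1}--\eqref{eq:LoeffenLTIdentity2} and their analogue for $\W$ versus $W$. The key auxiliary fact is the drift-change convolution identity
\[
\delta\int_0^x \W^{(p)}(x-y)W^{(p)}(y)\,\dd y=\int_0^x \W^{(p)}(y)\,\dd y-\int_0^x W^{(p)}(y)\,\dd y .
\]
It follows since $1/\psi^*_p(\vartheta)-1/\psi_p(\vartheta)=\delta\vartheta/(\psi_p\psi^*_p)$. Combined with \eqref{eq:LoeffenLTIdentity1} for $\W$, this gives mixed identities of the form
\[
q\,\W^{(p+q)}*W^{(p)}=\W^{(p+q)}-W^{(p)}-\delta\,\W^{(p+q)}*W^{(p)\prime},
\]
since $\psi^*_{p+q}=\psi_p-\delta\vartheta-q$. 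I would first establish this transform identity, interpreting $\W^{(p+q)}*W^{(p)\prime}$ as a convolution with the measure $W^{(p)}(\dd y)$, so that an atom $W^{(p)}(0)$ is allowed in the bounded variation case.

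For (i), fix $u\in[0,b)$ and substitute the definition \eqref{eq:RefractedWScaleFunc}. On $[0,b)$ we have $w^{(p)}(y;u)=W^{(p)}(y-u)$. For $y\ge b$ there is the extra term $\delta\int_b^y\W^{(p)}(y-s)W^{(p)\prime}(s-u)\,\dd s$.

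The plan is to compute $q\int_0^x\W^{(p+q)}(x-y)W^{(p)}(y-u)\,\dd y$ using the mixed identity above. This yields $\W^{(p+q)}(x-u)-W^{(p)}(x-u)-\delta\int_{[0,x-u)}\W^{(p+q)}(x-u-y)W^{(p)}(\dd y)$.

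Next, treat the double convolution $q\int_b^x\W^{(p+q)}(x-y)\int_b^y\W^{(p)}(y-s)W^{(p)\prime}(s-u)\,\dd s\,\dd y$ by Fubini. Applying \eqref{eq:LoeffenLTIdentity1} (for $\W$) to the inner $y$-integral turns it into $\delta\int_b^x(\W^{(p+q)}-\W^{(p)})(x-s)W^{(p)\prime}(s-u)\,\dd s$. The piece $-\delta\int_b^x\W^{(p)}(x-s)W^{(p)\prime}(s-u)\,\dd s$ combines with $-W^{(p)}(x-u)$ to give exactly $-w^{(p)}(x;u)$. The $\W^{(p+q)}$ piece cancels the part $s\in[b,x)$ of the measure integral, leaving only $y\in[0,b-u)$, which is \eqref{eq:RefractedWConvolutionIdentity}.

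Part (ii) follows in the same way from \eqref{eq:RefractedZScaleFunc}. Here one uses $q\,\W^{(p+q)}*Z^{(p)}=\Z^{(p+q)}-Z^{(p)}-\delta p\,\W^{(p+q)}*W^{(p)}$, which comes from the transforms of $Z^{(p)}$ and $\Z^{(p+q)}$. The $\delta p\int_b^x$ term is then handled by \eqref{eq:LoeffenLTIdentity1} exactly as above.

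Parts (iii) and (iv) are then algebraic consequences. I would write $q\int_b^x=q\int_0^x-q\int_0^b$ in \eqref{eq:SecondGenFunc-RefractedWTilde1}. On $[0,b)$ we have $w^{(p)}(y;u)=W^{(p)}(y-u)$, and (i) is then applied to the full integral. The case $u\ge b$ is immediate, since then $w^{(p)}=\W^{(p)}$ and \eqref{eq:LoeffenLTIdentity1} applies. Part (iv) follows from (ii) in the same way.

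For (v), I would follow the standard argument of \cite{LRZ2014}, going through the killed potential measure. Let $f(y)=W^{(p)}(y-u)$ for $y\le b$. Apply the compensation formula for the jump of $Y$ across $b$, or equivalently use the strong Markov property with the resolvent \eqref{eq:ClassicalKilledPotential} for $Y$ on $[b,a]$ at rate $p+q$. This writes the expectation as
\[
f(x)+q\int_b^a\Bigl(\tfrac{\W^{(p+q)}(x-b)}{\W^{(p+q)}(a-b)}\W^{(p+q)}(a-y)-\W^{(p+q)}(x-y)\Bigr)w^{(p)}(y;u)\,\dd y ,
\]
subject to one point that must be verified: the process $e^{-p t}w^{(p)}(V_t;u)$-type harmonicity must hold for the refracted dynamics, and this is where \eqref{eq:Refracted-KilledPotential} enters. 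A cleaner route, which I would try first, is to verify that both sides are $(p+q)$-harmonic for $Y$ killed at exiting $(b,a)$ and agree on the boundary. The right-hand side equals $W^{(p)}(x-u)$ for $x\le b$, since $\ovl w_b^{(p,q)}(x;u)=w^{(p)}(x;u)=W^{(p)}(x-u)$ and $\W^{(p+q)}(x-b)=0$ there, and it vanishes at $x=a$. Harmonicity follows from the representation \eqref{eq:SecondGenFunc-RefractedWTilde2} as $\W^{(p+q)}(x-\cdot)$ convolved with a measure supported on $[0,b)$ (shifted by $u$), combined with the known martingale property of $\W^{(p+q)}(Y-z)$ for $z\le b$. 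The $Z$ case is identical using \eqref{eq:SecondGenFunc-RefractedZTilde2}.

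The main obstacle is this harmonicity and uniqueness step, especially in the unbounded variation case. As a fallback, I would take the limit of the bounded variation case via the strong approximation of \cite{KL2010}.
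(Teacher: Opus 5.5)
\textbf{Parts (i)--(iv).} Here you follow the paper. The Appendix gets the mixed identity \eqref{eq:WangMainConvolutionIdentity} by differentiating Renaud's identity \eqref{eq:RenaudConvolutionIdentity}. You instead read it, and its $Z$-analogue, off from $\psi_p-\psi^*_{p+q}=\delta\vartheta+q$ and $\int_{[0,\infty)}\ee^{-\vartheta y}W^{(p)}(\dd y)=\vartheta/\psi_p(\vartheta)$. The Fubini step with \eqref{eq:LoeffenLTIdentity1} for $\W$, and the splitting $q\int_b^x=q\int_0^x-q\int_0^b$ in (iii)--(iv), are the same as the paper's. Your $Z$-identity, with $Z^{(p)}$ on the left, is the correct version of the misprinted \eqref{eq:WangZConvolutionIdentity}.

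\textbf{Part (v).} This is where you genuinely differ: the paper only cites Lemma~1 of \cite{F2014}, while your ``cleaner route'' is a self-contained proof, and it is easier than you fear.
By \eqref{eq:SecondGenFunc-RefractedWTilde2}, $\ovl{w}_b^{(p,q)}(\cdot;u)=\int\W^{(p+q)}(\cdot-z)\,\mu(\dd z)$ for a finite signed measure $\mu$ supported in $[u,b]$. Set $\tau:=\tau_{b,Y}^-\wedge\tau_{a,Y}^+$. For each $z\le b$, the process $\ee^{-(p+q)(t\wedge\tau)}\W^{(p+q)}(Y_{t\wedge\tau}-z)$ is a bounded martingale: it is the standard one stopped at $\tau_{z,Y}^-\wedge\tau_{a,Y}^+\ge\tau$.
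Fubini plus optional stopping then give \eqref{eq:W(q)JumpDownwardsinYDynamics} directly. The only inputs are $Y_{\tau_{a,Y}^+}=a$, the two-sided exit formula for $Y$, and $\ovl{w}_b^{(p,q)}(y;u)=W^{(p)}(y-u)$ for $y<b$ (and at $y=b$ in the creeping case). No uniqueness result is needed, and path variation plays no role.
One small correction: your claim that the right-hand side of (v) equals $W^{(p)}(x-u)$ holds only for $x<b$, since $\W^{(p+q)}(0)\neq0$ in bounded variation. This is harmless, because $Y$ then exits strictly below $b$.
The $Z$-case is identical, using \eqref{eq:SecondGenFunc-RefractedZTilde2} and $\ee^{-(p+q)t}\Z^{(p+q)}(Y_t)$ stopped at $\tau_{0,Y}^-\wedge\tau_{a,Y}^+\ge\tau$; this is where $b\ge0$ enters.
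Your route gives a proof in the paper's own notation, including the shift by $u$, and explains why the forms (iii)--(iv) are the useful ones; the citation is merely shorter.

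\textbf{Your alternative resolvent route.} As displayed, it has the wrong sign and is missing a term. Start from the $q=0$ case of (v), which the representation argument supplies, and stop $\ee^{-(p+q)t}w^{(p)}(Y_t;u)+q\int_0^t\ee^{-(p+q)s}w^{(p)}(Y_s;u)\,\dd s$ at $\tau$. This gives
\[
w^{(p)}(x;u)-\frac{\W^{(p+q)}(x-b)}{\W^{(p+q)}(a-b)}\,w^{(p)}(a;u)-q\int_b^a\Bigl(\frac{\W^{(p+q)}(x-b)}{\W^{(p+q)}(a-b)}\W^{(p+q)}(a-y)-\W^{(p+q)}(x-y)\Bigr)w^{(p)}(y;u)\,\dd y,
\]
which regroups exactly into the right-hand side of \eqref{eq:W(q)JumpDownwardsinYDynamics}. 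So the resolvent term must carry a minus sign, as it must since the left-hand side of (v) decreases in $q$, and the $w^{(p)}(a;u)$ term is needed.
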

		\begin{remark}\upshape{
			Observe that putting $\delta = 0$ yields that $\ovl{w}_b^{(p,q)}(x) = \ovl{W}_b^{(p,q)}(x)$ and $\ovl{z}_b^{(p,q)}(x) = \ovl{Z}_b^{(p,q)}(x)$. }
		\end{remark} 
		\noindent The reader is referred to \cite{KLP2012,KL2010,   KPP2014, F2014} for further details on refracted SNLPs.
		
		\vspace{2ex}
		\noindent \textbf{Refracted-reflected  SNLP}.
		In this section, we provide a brief exposition of the \emph{refracted-reflected L\'evy} process. For the construction of the process, we refer to  \cite{PY2018}.
		
		For fixed $b>0$ and $\delta > 0$ such that the condition \textbf{(H)}  holds, we define a refracted-reflected L\'evy process $U=\{U_t\}_{t\geq 0}$, a process for which a linear drift at rate $\delta$ is subtracted from its increments when it is above the level $b$ (refracted) and which is reflected at $0$. Hence, for  $R_t$,  a non-decreasing and right continuous process that represents the cumulative amounts  up to $t$ that pushes the process upwards when it attempts to go below 0, $U$ is the solution of
		%
		%
			%
			%
			%
			%
			%
			\begin{equation}
				U_t=X_t+{R}_t-\delta \int^t_0 \1_{\{U_s > b\}} \dd s, \quad t \geq 0,
			\end{equation}
			for which, in the particular case of bounded variation, we have
			\begin{equation}
				{R}_t = \sum_{t \geq 0: U_{t-} + \Delta X_t < 0} | U_{t-} + \Delta X_t |, \quad t \geq 0. \notag
			\end{equation}
			If, in addition, $\PP_x(U_t = y) = 0$ for $y \in [0,\infty)$ and Lebesgue a.e.~$t>0$ and the condition \textbf{(H)} is satisfied, then the unbounded variation solution for $U$ can be determined by strong approximation (see Proposition 2.1 of \cite{PY2018}). Hence, for  $q \geq 0$, any Borel set $B$ and $x,y,b \in [0,a]$, we have (see Theorem 4.1 and Corollary 4.2 in \cite{PY2018})
			\begin{equation}
				\mathbb{E}_x\Bigl( e^{-q \tau_{a,U}^+} 1_{\left\{\tau_{a,U}^+ < \infty \right\}} \Bigr)=\frac{z^{(q)}(x)}{z^{(q)}(a)}, \quad
				\mathbb{E}_x\Bigl(\int_0^{\infty} e^{-q t} 1_{\{U_t \in B, \; t < \tau_{a,U}^{+} \}} \mathrm{d} t\Bigr)= \int_B \Bigl(\frac{z^{(q)}(x)}{z^{(q)}(a)} {w}^{(q)}(a;y)-{w}^{(q)}(x ; y) \Bigr) \dd y. \label{eq:ReflectedRefractedPotentialMeasure}
			\end{equation}
			where $w^{(q)}$ and $z^{(q)}$ are defined in Eqs.~\eqref{eq:RefractedWScaleFunc} and \eqref{eq:RefractedZScaleFunc}, respectively.

		\section{Poissonian potential measures with two-sided killing}
		\label{main}
		In this section, we derive the Poissonian potential measures for $U$ and denote by the constant $b$ its corresponding refraction barrier unless specified otherwise. To do this  we denote by $S^\lambda_i$,  $S^\gamma_i$, for $i \in \N$, the arrival times of two independent Poisson processes $N_1^{\lambda}$, $N_2^{\gamma}$,  with rates $\lambda > 0$ and $\gamma>0$, respectively. We also assume that  $N_1^\lambda$ and  $N_2^\gamma$ are independent from the  underlying process $U$ and that the first observation occurs at either $S^\lambda_1$ or $S^\gamma_1$. Moreover,  for any process $Z$, we define for  $a>0$ the first passage Poissonian observation times 
		\begin{equation*}
			{T}_{a,Z}^{+ (-),k}= \; \min \{S^k_i: {Z}_{S^k_i}> (<) \; a \},\quad k=\lambda, \gamma,
		\end{equation*}
		where as before  $Z$ in the subscript indicates the underlying process (by convention $\min \varnothing = \infty$). We note that $Z$ may change in the following sections to $\wt{X}$, $Y$ and $U$ depending on the process under consideration.  Clearly, it holds that $\tau_{a,X }^{+ (-)} \leq T_{a,X}^{+ (-),k}$, where $k = \lambda,\gamma$, and similar inequalities hold for the stopping times with corresponding subscripts  $\wt{X}$, $Y$, and $U$,  where for the reader's convenience, we also recall that $\{X_t\}_{t\geq0}$ denotes a SNLP, $\{Y_t\}_{t\geq0}$ with $Y_t = X_t - \delta t$ denotes the drift-changed process, $\{\wt{X}_t\}_{t \geq 0}$ denotes the SNLP reflected from below at $0$ and $\{U_t\}_{t\geq 0}$ denotes the refracted-reflected SNLP.

		Our goal is to derive the potential densities of
		\begin{align}
			&\mathbb{E}_x \Bigl(\int_0^{\infty} \ee^{-q t} 1_{\left\{U_t \in \dd u, t < T_{b,U}^{-,\gamma} \wedge T_{a,U}^{+,\lambda} \right\}} \mathrm{d} t\Bigr), \quad \quad  \mathbb{E}_x \Bigl(\int_0^{\infty} \ee^{-q t} 1_{\left\{U_t \in \dd u, t < T_{b,U}^{+,\gamma} \wedge T_{a,U}^{+,\lambda} \right\}} \mathrm{d} t\Bigr), \notag \\
			&	\mathbb{E}_x \Bigl(\int_0^{\infty} \ee^{-q t} 1_{\left\{U_t \in \dd u, t < T_{b,U}^{+,\gamma} \wedge T_{a,U}^{-,\lambda} \right\}} \mathrm{d} t\Bigr), \quad \quad \mathbb{E}_x \Bigl(\int_0^{\infty} \ee^{-q t} 1_{\left\{U_t \in \dd u, t < T_{b,U}^{-,\gamma} \wedge T_{a,U}^{-,\lambda} \right\}} \mathrm{d} t\Bigr), \notag
		\end{align}
		and to take appropriate limits of $\gamma$ and $\lambda$ to derive further identities, see Section \ref{Sec:LimitsofMainTheorems} for more details. 
	The quantities above consider that each observer is interested in the process  behaviour above and below one of the model barriers respectively, i.e. the barrier $b$ for the $\gamma$-rate observer and barrier $a$ for the $\lambda$-rate observer. We are interested in these particular quantities due to their potential applications in insurance and other areas. For example, in the context of insurance, the $\gamma$-rate observer may be concerned with payments of dividends which occur based on the level $b$ and the $\lambda$-observer with excess capital levels (above level $a$) which could be invested or transferred to other business lines.  
		
		\noindent To derive them, we will require the next two lemmas, whose proofs can be found in the Appendix.
		
		\begin{lemma} \label{lem:PoissonPotentialsAndFluctuations}
			Let $0 \leq b \leq a$, $q \geq 0$ and $0 < \lambda < \infty$. Then the following identities hold:
			\begin{enumerate}
				\item[\upshape{(i)}] For $ x \in [b,a]$ and $ y \in [b,\infty)$,
				\begin{equation}
					\E_x\Bigl( \int^\infty_0 e^{-q t} \1_{\{Y_t \in \dd y, \; t < T_{a,Y}^{+,\lambda} \wedge \tau_{b,Y}^- \}} \dd t \Bigr) = \Bigl( \frac{\W^{(q)}(x-b)}{\Z^{(q)}(a-b, \varphi_{q+\lambda})} \Z^{(q)}(a-y, \varphi_{q+\lambda}) - \W^{(q)}(x-y)\Bigr) \dd y, \label{eq:Landriault-Identity1}
				\end{equation}
				and, consequently, for $ y \in (a,\infty)$,	
				\begin{equation}
					\E_x\bigl( \ee^{-q T_{a,Y}^{+,\lambda}} \1_{\{Y_{T_{a,Y}^{+,\lambda}} \in \dd y, \; T_{a,Y}^{+,\lambda} < \tau_{b,Y}^-\}} \bigr) = \lambda \frac{\W^{(q)}(x-b)}{\Z^{(q)}(a-b,\varphi_{q+\lambda})} \ee^{\varphi_{q+\lambda}(a-y)} \dd y.  \label{eq:Landriault-Identity2}
				\end{equation}
				
				\item [\upshape{(ii)}] For $ x,y \in [b,\infty)$,	
				\begin{equation}
					\E_x \Bigl( \int^\infty_0 \ee^{-q t} \1_{\{Y_t \in \dd y, \; t < T_{a,Y}^{-,\lambda} \wedge \tau_{b,Y}^- \}} \dd t \Bigr) = \Bigl(\frac{\Z^{(q+\lambda)}(a-y, \varphi_{q})}{\Z^{(q+\lambda)}(a-b, \varphi_{q})} \W^{(q+\lambda)}(x-b) - \W^{(q+\lambda)}(x-y) \Bigr) \dd y, \label{eq:Landriault-Identity3}
				\end{equation}
				and consequently, for $y \in (b,a)$,
				\begin{equation}
					\E_x \Bigl( \ee^{-q T_{a,Y}^{-,\lambda}} \1_{\{Y_{T_{a,Y}^{-,\lambda}} \in \dd y,\; T_{a,Y}^{-,\lambda} < \tau_{b,Y}^- \}} \Bigr) = \lambda \Bigl(\frac{\Z^{(q+\lambda)}(a-y, \varphi_{q})}{\Z^{(q+\lambda)}(a-b, \varphi_{q})} \W^{(q+\lambda)}(x-b) - \W^{(q+\lambda)}(x-y) \Bigr) \dd y. \label{eq:Landriault-Identity4}
				\end{equation}
			\end{enumerate}
		\end{lemma}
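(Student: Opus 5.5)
The plan is to shift the state space so that $b$ becomes $0$ and then use the known Poissonian-observation identities for a spectrally negative L\'evy process killed on exiting an interval (cf.~\cite{LLWX2018}). Since $Y$ is an SNLP with scale functions $\W^{(q)}$, $\Z^{(q)}$ and right-inverse $\varphi_q$, the process $Y-b$ started at $x-b$ satisfies all those results with $a$ replaced by $a-b$. For a self-contained argument I would instead derive each identity directly by a renewal/strong Markov decomposition at the first observation epoch, using the potentials in \eqref{eq:ClassicalKilledPotential} and their one-sided limits.

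\textbf{Part (i).} Let $v(x,\dd y)$ be the left side of \eqref{eq:Landriault-Identity1}. Before $\tau_{b,Y}^-$, killing occurs at the first $\lambda$-epoch at which $Y>a$. The memoryless property then splits the potential into two pieces. The first is the potential of $Y$ killed at $\tau_{b,Y}^-$ only (no upper barrier), at rate $q$; by \eqref{eq:ClassicalKilledPotential} with $a\to\infty$ this is $(\ee^{-\varphi_q b}\ldots)$, more precisely $\bigl(\ee^{\varphi_q(x-b)\cdot 0}\ldots\bigr)$, that is $\bigl(\W^{(q)}(x-b)\ee^{-\varphi_q (y-b)}-\W^{(q)}(x-y)\bigr)\dd y$. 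The second piece subtracts the mass killed by $\lambda$-observations above $a$: it equals $\lambda\int_a^\infty(\text{potential of } Y \text{ killed at } \tau_b^- \text{ at rate } q+\lambda)(x,\dd z)\, v(z,\dd y)$. The cleaner route is a Laplace transform in $y$, or equivalently a guess-and-verify. Take the right-hand side as an ansatz $A(x)\Z^{(q)}(a-y,\varphi_{q+\lambda})-\W^{(q)}(x-y)$. Check that it solves the resolvent equation
\begin{align*}
v^{(q)}=v^{(q+\lambda)}+\lambda\,v^{(q+\lambda)}\mathbf 1_{[b,a]}v^{(q)},
\end{align*}
where the killed potential lives on $[b,a]$ between observations. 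Use \eqref{eq:LoeffenLTIdentity1} and \eqref{eq:LoeffenLTIdentity2} (for $Y$) to collapse the convolutions. The constant $A(x)$ is fixed by the boundary behaviour at $x=b$ and by continuity, and comes out as $\W^{(q)}(x-b)/\Z^{(q)}(a-b,\varphi_{q+\lambda})$.

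\textbf{Consequences.} Given the potential, \eqref{eq:Landriault-Identity2} follows at once. The event $\{T^{+,\lambda}_{a,Y}\in\dd t,\ Y_t\in\dd y\}$ with $y>a$ has density $\lambda$ times the potential density at $y$, and for $y>a$ we have $\Z^{(q)}(a-y,\varphi_{q+\lambda})=\ee^{\varphi_{q+\lambda}(a-y)}$ and $\W^{(q)}(x-y)=0$ because $x\le a<y$. Part (ii) is identical in structure with the roles reversed. Killing now happens at $\lambda$-epochs below $a$, so on $[b,a)$ the effective rate is $q+\lambda$ and on $[a,\infty)$ it is $q$. The same resolvent argument, interchanging $q$ and $q+\lambda$ and using \eqref{eq:LoeffenLTIdentity2} with $p=q+\lambda$ and $\varphi_q$, gives \eqref{eq:Landriault-Identity3}. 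Again $\lambda$ times the density on $(b,a)$ gives \eqref{eq:Landriault-Identity4}.

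\textbf{Main obstacle.} The hard step is verifying the resolvent identity, where the convolution of $\Z^{(q)}(\cdot,\varphi_{q+\lambda})$ against $\W^{(q+\lambda)}$ over a half-line has to reduce via \eqref{eq:LoeffenLTIdentity2}. The step needs the fact that $\Z^{(q+\lambda)}(x,\varphi_{q+\lambda})=\ee^{\varphi_{q+\lambda}x}$-type simplifications hold, since $\psi^*_{q+\lambda}(\varphi_{q+\lambda})=0$. Identifying $A(x)$ also requires care in the unbounded variation case, where $\W^{(q)}(0)=0$; there I would argue via the bounded-variation case and an approximation.
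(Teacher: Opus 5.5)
Your primary route is exactly the paper's proof: shift by $b$ using spatial homogeneity of $Y$, quote the Poissonian potential measures of \cite{LLWX2018}, and read off \eqref{eq:Landriault-Identity2} and \eqref{eq:Landriault-Identity4} as $\lambda$ times the potential density, using $\Z^{(q)}(a-y,\varphi_{q+\lambda})=\ee^{\varphi_{q+\lambda}(a-y)}$ and $\W^{(q)}(x-y)=0$ for $y>a\ge x$.

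Your self-contained resolvent route is genuinely different. It removes the dependence on \cite{LLWX2018} and shows directly why $\Z^{(q)}(\cdot,\varphi_{q+\lambda})$ appears, but it needs two repairs.

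\emph{The ``second piece'' mixes rates.} Let $u^{(p)}(x,z)=\W^{(p)}(x-b)\ee^{-\varphi_p(z-b)}-\W^{(p)}(x-z)$ be the density of $Y$ killed only at $\tau_{b,Y}^-$. The correct identity is $u^{(q)}-v=\lambda\,u^{(q)}\1_{(a,\infty)}v$, not one with $u^{(q+\lambda)}$. The equation you then use, $v=u^{(q+\lambda)}+\lambda\,u^{(q+\lambda)}\1_{[b,a]}v$, is right provided your ``$v^{(q+\lambda)}$'' means $u^{(q+\lambda)}$.

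\emph{$A$ is not fixed by boundary behaviour at $x=b$.} Insert the ansatz and use \eqref{eq:LoeffenLTIdentity1} together with $\lambda\int_0^s\ee^{-\varphi_{q+\lambda}w}\W^{(q)}(w)\,\dd w=1-\ee^{-\varphi_{q+\lambda}s}\Z^{(q)}(s,\varphi_{q+\lambda})$. The latter comes from \eqref{eq:GeneralisedZFunc} for $Y$, since $\psi^*_q(\varphi_{q+\lambda})=\lambda$. The $y$-dependence then factors out, leaving $A(x)=\ee^{-\varphi_{q+\lambda}(a-b)}\W^{(q+\lambda)}(x-b)+\lambda\int_b^a u^{(q+\lambda)}(x,z)A(z)\,\dd z$. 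The choice $A=C\,\W^{(q)}(\cdot-b)$ solves this iff $C\,\Z^{(q)}(a-b,\varphi_{q+\lambda})=1$. This is purely algebraic, so no bounded-variation approximation is needed when $\W^{(q)}(0)=0$. What you do need, and omit, is uniqueness for the resolvent equation. A contraction of norm at most $\lambda/(q+\lambda)$ handles $q>0$; then let $q\downarrow 0$.

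\textbf{Part (ii).} The equation that closes on $[b,a]$ is $v=u^{(q)}-\lambda\,u^{(q)}\1_{[b,a]}v$. Uniqueness here is not a contraction argument. Instead use that $I-\lambda v\1_{[b,a]}$ is a left inverse of $I+\lambda u^{(q)}\1_{[b,a]}$, by the other-order identity $v=u^{(q)}-\lambda\, v\1_{[b,a]}u^{(q)}$. The same computation, with $B(x)=\W^{(q+\lambda)}(x-b)/\Z^{(q+\lambda)}(a-b,\varphi_q)$, verifies \eqref{eq:Landriault-Identity3}, but only for $x\in[b,a]$. For $x>a$ the term $\lambda\int_y^a\W^{(q)}(x-z)\W^{(q+\lambda)}(z-y)\,\dd z$ is truncated at $a<x$ and no longer collapses via \eqref{eq:LoeffenLTIdentity1}.

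This is not a weakness of your method: \eqref{eq:Landriault-Identity3} is false for $x>a$. Take $Y$ a standard Brownian motion, so $\varphi_p=\sqrt{2p}$ and $\W^{(p)}(s)=(\ee^{\varphi_p s}-\ee^{-\varphi_p s})/\varphi_p$. Let $b=0<a$ and $q>0$. For $y\in[a,a+1]$ the right-hand side equals $\ee^{\varphi_{q+\lambda}x}k(y)/\varphi_{q+\lambda}+O(1)$, where $k(y)=\ee^{\varphi_q(a-y)}/\Z^{(q+\lambda)}(a,\varphi_q)-\ee^{-\varphi_{q+\lambda}y}$. Here $k>0$, because one computes $\Z^{(q+\lambda)}(a,\varphi_q)=\ee^{\varphi_{q+\lambda}a}-\frac{\varphi_{q+\lambda}-\varphi_q}{\varphi_{q+\lambda}}\sinh(\varphi_{q+\lambda}a)<\ee^{\varphi_{q+\lambda}a}$. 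So the right-hand side tends to $+\infty$ as $x\to\infty$, whereas the left-hand side has total mass at most $1/q$.

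The range in (ii) must therefore be $x\in[b,a]$, $y\ge b$. That is all the paper later uses, in Lemma~\ref{lem:SimilartoRonnieLemma}(ii) and Theorem~\ref{Thm:SecondMain-ReflectedRefractedDoubleKilledPoissonPotential}, and neither the citation nor your argument can deliver more.
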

		
		\noindent Lastly, we define the auxiliary functions
		\begin{align}
			\alpha_1^{(q,p)}(b;u) 
			& := \ee^{-\varphi_q u} \Bigl( 1 + (p-q)\int_0^{b-u} \ee^{-\varphi_q y}  W^{(p)}(y) \mathrm{d} y - \delta \int_{[0,b-u)} \ee^{-\varphi_q y} W^{(p)} (\dd y)  \Bigr), \label{eq:LimitOfWBar} \\
			\alpha_2^{(q,p)}(b) 
			&:= \frac{q}{\varphi_q} + (p-q)\int_0^{b} \ee^{-\varphi_q y}  Z^{(p)}(y) \mathrm{d} y - \delta p \int_0^b \ee^{-\varphi_q y} W^{(p)} (y) \dd y , \label{eq:LimitOfZBar}
		\end{align}
		and have the following. 
		\begin{lemma}\label{lem:SimilartoRonnieLemma}
			Let $\lambda > 0$, $p,q \geq 0$, $x \in [b,a]$ and $u \in [0,b)$. Further, define $f^{(p)}_{1,1}(x) := W^{(p)}(x-u)$, $f^{(p)}_{1,2}(x) := Z^{(p)}(x)$, $f^{(p,q)}_{2,1}(x) := \ovl{w}_b^{(p,q-p)}(x;u)$ and $f^{(p,q)}_{2,2}(x) := \ovl{z}_b^{(p,q-p)}(x)$. Then, for $\mathrm{j} =1,2$, it holds that 
			\begin{align}
				\text{\upshape{(i)}}&\quad\E_x\bigl( \ee^{-q\tau_{b,Y}^-} \1_{\{\tau_{b,Y}^- < T_{a,Y}^{+,\lambda}\}} f^{(p)}_{1,\jj}(Y_{\tau_{b,Y}^-}) \bigr) = f^{(p,q)}_{2,\jj}(x) - \frac{\W^{(q)}(x-b)}{\Z^{(q)}(a-b,\varphi_{q+\lambda})}\lambda \int^\infty_0 \ee^{-\varphi_{q+\lambda}y}f^{(p,q)}_{2,\jj}(y+a)\dd y,  \label{eq:MainLemmaResultLikeRonnie} \\
				\text{\upshape{(ii)}}&\quad \E_x\bigl( \ee^{-q\tau_{b,Y}^-} \1_{\{\tau_{b,Y}^- < T_{a,Y}^{-,\lambda}\}} f^{(p)}_{1,\jj}(Y_{\tau_{b,Y}^-}) \bigr) = f^{(p,q)}_{2,\jj}(x) + \lambda \int^x_b \W^{(q+\lambda)}(x-y)f^{(p,q)}_{2,\jj}(y)\dd y \notag \\
				&\quad \quad \quad - \frac{\W^{(q+\lambda)}(x-b)}{\Z^{(q+\lambda)}(a-b,\varphi_{q})}\Bigl( c_\jj^{(q,p)} e^{\varphi_q (a-b)} + \lambda \int^a_b \Z^{(q+\lambda)}(a-y,\varphi_q) f_{2,\jj}^{(p,q)}(y) \dd y \Bigr), \label{eq:MainLemmaResultKindaLikeRonnie}
			\end{align}
			where we have denoted $c_1^{(q,p)} := \ee^{\varphi_q b} \alpha_1^{(q,p)}(b;u)$ and $c_2^{(q,p)} := \ee^{\varphi_q b} \alpha_2^{(q,p)}(b)$.
		\end{lemma}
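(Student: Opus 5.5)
The approach is to condition on the first exit of $Y$ from the relevant domain, using the continuous-observation identity of Lemma \ref{lem:RefractedConvolutionIdentities}(v) together with the Poissonian potential identities of Lemma \ref{lem:PoissonPotentialsAndFluctuations}. Write $g_\jj(x)$ for the left-hand side of (i).

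\textbf{Part (i).} For a given $\jj$, the first step is to find a function $h_\jj$ defined on $[b,\infty)$ that plays the role of the continuous-observation answer with no upper barrier. It should satisfy
\begin{equation*}
h_\jj(Y_{t\wedge\tau^-_{b,Y}})\,\ee^{-q(t\wedge \tau_{b,Y}^-)} \text{ is a martingale}, \qquad h_\jj = f^{(p)}_{1,\jj} \text{ below } b .
\end{equation*}
By Lemma \ref{lem:RefractedConvolutionIdentities}(v), with the pair $(p,q-p)$ in place of $(p,q)$, the candidate is $h_\jj=f^{(p,q)}_{2,\jj}$. Indeed, (v) gives
\begin{equation*}
\E_x\bigl(\ee^{-q\tau^-_{b,Y}}\1_{\{\tau^-_{b,Y}<\tau^+_{c,Y}\}}f_{1,\jj}^{(p)}(Y_{\tau^-_{b,Y}})\bigr)=f_{2,\jj}(x)-\frac{\W^{(q)}(x-b)}{\W^{(q)}(c-b)}f_{2,\jj}(c)
\end{equation*}
for every $c\ge a$. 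Here $\ovl w^{(p,q-p)}_b$ and $\ovl z^{(p,q-p)}_b$ reduce to $W^{(p)}(\cdot-u)$ and $Z^{(p)}$ below $b$, since the integral in \eqref{eq:SecondGenFunc-RefractedWTilde1} vanishes there and $w^{(p)}(x;u)=W^{(p)}(x-u)$ for $x\le b$. One should check that this also holds when $p>q$, where $q-p<0$; the identities only require $p+(q-p)\ge 0$.

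The second step uses the strong Markov property at the Poissonian stopping time. On $\{T^{+,\lambda}_{a,Y}<\tau^-_{b,Y}\}$ the process restarts from $Y_{T^{+,\lambda}_{a,Y}}=y>a$. On the complementary event, $\tau^-_{b,Y}$ comes first. The martingale property of $\ee^{-qt}f_{2,\jj}(Y_t)$ killed at $\tau^-_{b,Y}$ (obtained by letting $c\to\infty$ in the display above, after the usual localisation) then gives
\begin{equation*}
f_{2,\jj}(x)=g_\jj(x)+\E_x\Bigl(\ee^{-qT^{+,\lambda}_{a,Y}}\1_{\{T^{+,\lambda}_{a,Y}<\tau^-_{b,Y}\}}f_{2,\jj}\bigl(Y_{T^{+,\lambda}_{a,Y}}\bigr)\Bigr).
\end{equation*}
Inserting the overshoot density \eqref{eq:Landriault-Identity2} and substituting $y\mapsto y+a$ gives exactly \eqref{eq:MainLemmaResultLikeRonnie}.

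To avoid delicate uniform-integrability issues, I would instead work with $\tau^+_{c,Y}$ for finite $c$ and let $c\to\infty$ at the end. Alternatively, I would write $g_\jj$ as the sum of $\E_x\bigl(\ee^{-q\tau^-_{b,Y}}\cdots\bigr)$ without the Poisson constraint and a correction term. The correction is $\lambda\int\ee^{-qt}\,\E_x\bigl(\1_{\{Y_t\in \dd y,\, t<T\wedge\tau^-\}}\bigr)\,\1_{\{y>a\}}\,f_{2,\jj}(y)$, which uses \eqref{eq:Landriault-Identity1} directly. This potential-density route is cleaner.

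\textbf{Part (ii).} The argument is the same, but now the observer kills at rate $\lambda$ while $Y<a$. Write
\begin{equation*}
g_\jj(x)=\E_x\bigl(\ee^{-q\tau^-_{b,Y}}f_{1,\jj}\bigr)-\lambda\int_b^a \E_x\Bigl(\int_0^\infty \ee^{-qt}\1_{\{Y_t\in \dd y,\; t<T^{-,\lambda}_{a,Y}\wedge\tau^-_{b,Y}\}}\dd t\Bigr)\,f_{2,\jj}(y).
\end{equation*}
The first term is $f_{2,\jj}(x)$, the $c\to\infty$ limit of the display above. This limit requires $\W^{(q)}(x-b)f_{2,\jj}(c)/\W^{(q)}(c-b)\to \W^{(q)}(x-b)\,c^{(q,p)}_\jj\ee^{-\varphi_q b}$, which follows from \eqref{eq:SecondGenFunc-RefractedWTilde2} and \eqref{eq:SecondGenFunc-RefractedZTilde2} together with \eqref{eq:LimitofRatioofScaleFuncs}; these limits are precisely where $\alpha_1$ and $\alpha_2$ come from. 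Hence the first term is really $f_{2,\jj}(x)-\W^{(q)}(x-b)\,\ee^{-\varphi_q b}c_\jj$.

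Inserting \eqref{eq:Landriault-Identity3} in the second term and then using the convolution identity \eqref{eq:LoeffenLTIdentity1} for $\W$ to convert $\W^{(q)}(x-b)$ into $\W^{(q+\lambda)}$ terms should produce the stated combination, $\lambda\int_b^x \W^{(q+\lambda)}f_{2,\jj}$ together with the $\Z^{(q+\lambda)}(a-b,\varphi_q)$ denominator.

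\textbf{Main obstacle.} I expect the hardest part to be this final algebraic matching. In particular, one must show that $\W^{(q)}(x-b)\ee^{\varphi_q(a-b)}$-type terms recombine through $\Z^{(q+\lambda)}(\cdot,\varphi_q)$, using \eqref{eq:LoeffenLTIdentity2} with $\vartheta=\varphi_q$, and one must justify the $c\to\infty$ asymptotics.
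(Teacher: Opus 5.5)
\paragraph{Part (i).} Your argument here works and is a mild shortcut of the paper's. $f_{2,\jj}$ is $q$-harmonic for $Y$ killed below $b$; this is Lemma~\ref{lem:RefractedConvolutionIdentities}(v) with $(p,q-p)$, applied on every $[b,c]$. After localising at $\tau^+_{c,Y}$, the boundary term vanishes as $c\to\infty$: the rate-$\lambda$ killing above $a$ makes $\E_x\bigl(\ee^{-q\tau^+_{c,Y}}\1_{\{\tau^+_{c,Y}<T^{+,\lambda}_{a,Y}\wedge\tau^-_{b,Y}\}}\bigr)$ decay like $\ee^{-\varphi_{q+\lambda}c}$, while $f_{2,\jj}(c)=O(\W^{(q)}(c))$.

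However, your premise that $f_{2,\jj}$ is ``the continuous-observation answer with no upper barrier'' is false. Letting $c\to\infty$ in \eqref{eq:W(q)JumpDownwardsinYDynamics}--\eqref{eq:Z(q)JumpDownwardsinYDynamics} gives
\[
h_\jj(x):=\E_x\bigl(\ee^{-q\tau^-_{b,Y}}\1_{\{\tau^-_{b,Y}<\infty\}}f^{(p)}_{1,\jj}(Y_{\tau^-_{b,Y}})\bigr)=f_{2,\jj}(x)-c^{(q,p)}_\jj\W^{(q)}(x-b),
\]
because $f_{2,\jj}(c)/\W^{(q)}(c-b)\to c^{(q,p)}_\jj$. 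Your stated limit $c^{(q,p)}_\jj\ee^{-\varphi_q b}$ is off by a factor. The paper uses $h_\jj$ in the strong Markov decomposition. The $c_\jj$-terms then cancel since $\lambda\int_a^\infty\ee^{\varphi_{q+\lambda}(a-y)}\W^{(q)}(y-b)\,\dd y=\Z^{(q)}(a-b,\varphi_{q+\lambda})$. That cancellation is why using $f_{2,\jj}$ happens to give the right answer in (i).

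\paragraph{Part (ii).} Here there is a genuine gap. Nothing kills the process above $a$, so the martingale $\ee^{-q(t\wedge\tau^-_{b,Y})}\W^{(q)}(Y_{t\wedge\tau^-_{b,Y}}-b)$ is not uniformly integrable up to $T^{-,\lambda}_{a,Y}\wedge\tau^-_{b,Y}$. Consequently the choice of harmonic function now matters.

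The strong Markov property at $T^{-,\lambda}_{a,Y}$ gives $g_\jj(x)=h_\jj(x)-\lambda\int_b^a u(x,y)h_\jj(y)\,\dd y$, with $u$ the density in \eqref{eq:Landriault-Identity3} and $h_\jj$ in \emph{both} places. Your display uses $h_\jj(x)$ for the first term (after your correction) but $f_{2,\jj}$ in the integrand. It therefore drops $c^{(q,p)}_\jj\lambda\int_b^a u(x,y)\W^{(q)}(y-b)\,\dd y$. By \eqref{eq:LoeffenLTIdentity1}, by \eqref{eq:LoeffenLTIdentity2} with $\vartheta=\varphi_q$, and by $\Z^{(q)}(z,\varphi_q)=\ee^{\varphi_q z}$, this dropped term equals
\[
c^{(q,p)}_\jj\Bigl(\W^{(q)}(x-b)-\frac{\W^{(q+\lambda)}(x-b)}{\Z^{(q+\lambda)}(a-b,\varphi_q)}\ee^{\varphi_q(a-b)}\Bigr),
\]
which is not zero in general. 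So the ``final algebraic matching'' you anticipate cannot succeed: no identity turns your $-c_\jj\W^{(q)}(x-b)$ into the stated $-c_\jj\ee^{\varphi_q(a-b)}\W^{(q+\lambda)}(x-b)/\Z^{(q+\lambda)}(a-b,\varphi_q)$. If instead you use $f_{2,\jj}$ in both places, as in your part (i), the $c_\jj$-term disappears entirely. Equivalently, in your localised version the boundary term at $\tau^+_{c,Y}$ no longer vanishes as $c\to\infty$ in (ii). Your ``alternative'' potential-density route in (i) has the same inconsistency if its first term is the true infinite-horizon expectation.

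The fix is to use $h_\jj$ consistently. The $c_\jj$-contributions $-c_\jj\W^{(q)}(x-b)+c_\jj\lambda\int_b^a u(x,y)\W^{(q)}(y-b)\,\dd y$ then combine to exactly the stated term. The $f_{2,\jj}$-part yields $\lambda\int_b^x\W^{(q+\lambda)}(x-y)f_{2,\jj}(y)\,\dd y$ and the $\Z^{(q+\lambda)}(a-y,\varphi_q)$ integral directly from \eqref{eq:Landriault-Identity3}.
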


		\begin{remark} \label{rem:InitialValueTheorem}\upshape{
			\begin{itemize}
				\item[\upshape(i)] 	By using the initial value theorem for Laplace transforms (see, for instance, Theorem 2.34 p.~88 in \cite{S1999}), we get
				\begin{equation}
					\lim_{\lambda \rightarrow \infty} \frac{\varphi_{q+\lambda}}{\lambda}\Z^{(q)}(a-b,\varphi_{q+\lambda}) = \W^{(q)}(a-b), \quad \text{ and } \quad
					\lim_{\lambda \rightarrow \infty} \varphi_{q+\lambda} \int^\infty_0 \ee^{-\varphi_{q+\lambda}y}f^{(p,q)}_{2,\jj}(y+a)\dd y = f^{(p,q)}_{2,\jj}(a), \notag
				\end{equation}
				for $\jj \in \{1,2\}$. Thus, by using the above equations along with Eq.~\eqref{eq:MainLemmaResultLikeRonnie} and dominated convergence,
				\begin{equation}
					\E_x\bigl( \ee^{-q\tau_{b,Y}^-} \1_{\{\tau_{b,Y}^- < \tau_{a,Y}^+\}} f^{(p)}_{1,\jj}(Y_{\tau_{b,Y}^- }) \bigr) = \lim_{\lambda \rightarrow \infty} \E_x\bigl( \ee^{-q\tau_{b,Y}^-} \1_{\{\tau_{b,Y}^- < T_{a,Y}^{+,\lambda}\}} f^{(p)}_{1,\jj}(Y_{\tau_{b,Y}^- }) \bigr) = f^{(p,q)}_{2,\jj}(x) - \frac{\W^{(q)}(x-b)}{\W^{(q)}(a-b)}f^{(p,q)}_{2,\jj}(a),  \notag
				\end{equation}
				which coincides with Eqs.~\eqref{eq:W(q)JumpDownwardsinYDynamics} and \eqref{eq:Z(q)JumpDownwardsinYDynamics}, as expected.
				
				\item[\upshape{(ii)}] The requirements of the initial value theorem (see Theorem 2.7 p.~54 in \cite{S1999}) are that the underlying functions are sufficiently smooth and of  exponential order which are satisfied in all our applications of it. Indeed, we need only consider the underlying scale functions $W$ and $\W$ since all other scale functions considered are compositions thereof. It is well known (see Lemma 2.3 of \cite{KKR2012} as well as Section 3.5 therein for further discussions on smoothness) that the scale functions $W$ and $\W$ are almost everywhere differentiable, and furthermore from Chapter 8 of \cite{K2014} that $W^{(q)}(x) = \ee^{\Phi_q x} W_{\Phi_q}(x)$ and $\W^{(q)}(x) = \ee^{\varphi_q x} \W_{\varphi_q}(x)$ which are indeed of exponential order. 
			\end{itemize}}
		\end{remark}
		\noindent We now state and prove our main theorems.
		
		\begin{theorem}\label{Thm:Main-ReflectedRefractedDoubleKilledPoissonPotential}
			Let $q \geq 0$. Then, for $\lambda, \gamma > 0$, $x,b \in [0,a]$ and $u \in [0,\infty)$,
			\begin{align}
				&\mathbb{E}_x \Bigl(\int_0^{\infty} e^{-q t} 1_{\left\{U_t \in \dd u, t < T_{b,U}^{-,\gamma} \wedge T_{a,U}^{+,\lambda} \right\}} \mathrm{d} t\Bigr) \big/ \dd u \notag \\
				&= \frac{\lambda \int^\infty_0 \ee^{-\varphi_{q+\lambda}y}\overline{w}_b^{(q+\gamma,-\gamma)}(y+a;u) \dd y \1_{\{u \in [0,a]\}} + \ee^{-\varphi_{q+\lambda}(u-a)}\1_{\{u \in (a,\infty)\}}}{\lambda \int^\infty_0 \ee^{-\varphi_{q+\lambda}y}\overline{z}_b^{(q+\gamma,-\gamma)}(y+a) \dd y} \overline{z}_b^{(q+\gamma,-\gamma)}(x) -  \overline{w}_b^{(q+\gamma,-\gamma)}(x;u), \label{eq:ReflectedRefracted-DoubleKilledPotential}
			\end{align}
			where $\overline{w}_b^{(\cdot,\cdot)}$ and $\overline{z}_b^{(\cdot,\cdot)}$ are defined in Eqs.~\eqref{eq:SecondGenFunc-RefractedWTilde1} and \eqref{eq:SecondGenFunc-RefractedZTilde1}, respectively. 
		\end{theorem}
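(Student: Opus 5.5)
We argue through the strong Markov property at the first passage of $U$ across the refraction barrier $b$, in both directions, combined with the fact that below $b$ the process $U$ is the reflected process $\wt X$ and above $b$ it behaves like $Y$. Set
\[
g(x;u):=\mathbb{E}_x \Bigl(\int_0^{\infty} e^{-q t} 1_{\{U_t \in \dd u, t < T_{b,U}^{-,\gamma} \wedge T_{a,U}^{+,\lambda} \}} \mathrm{d} t\Bigr) / \dd u .
\]
Below $b$, only the $\gamma$-clock can kill, and it does so at rate $\gamma$ while $U<b$. Hence for $x\in[0,b]$, up to $\tau_{b,\wt X}^+$, the quantity $g$ equals the reflected potential with rate $q+\gamma$. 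Since $\wt X$ creeps upward, $U_{\tau_b^+}=b$. Using Eq.~\eqref{eq:ReflectedBelowExitfromAbove} with $q$ replaced by $q+\gamma$, this gives
\[
g(x;u)=\Bigl(\tfrac{Z^{(q+\gamma)}(x)}{Z^{(q+\gamma)}(b)}W^{(q+\gamma)}(b-u)-W^{(q+\gamma)}(x-u)\Bigr)\1_{\{u<b\}}+\tfrac{Z^{(q+\gamma)}(x)}{Z^{(q+\gamma)}(b)}g(b;u).
\]
Above $b$, i.e.\ for $x\in[b,a]$, only the $\lambda$-clock kills, and only while $U>a$; there $U=Y$ until $\tau_{b,Y}^-$. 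Using Lemma~\ref{lem:PoissonPotentialsAndFluctuations}(i), we obtain
\[
g(x;u)=\Bigl(\tfrac{\W^{(q)}(x-b)}{\Z^{(q)}(a-b,\varphi_{q+\lambda})}\Z^{(q)}(a-u,\varphi_{q+\lambda})-\W^{(q)}(x-u)\Bigr)\1_{\{u\ge b\}}+\E_x\bigl(e^{-q\tau_{b,Y}^-}\1_{\{\tau_{b,Y}^-<T_{a,Y}^{+,\lambda}\}}g(Y_{\tau_{b,Y}^-};u)\bigr).
\]
Here we should take care that $u>a$ is allowed, because $Y$ can sit above $a$ between observations.

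Next we substitute the first expression into the second. The undershoot $Y_{\tau_b^-}$ lies in $[0,b]$ (or is $b$ in the creeping case), and possibly below $0$. Below $0$, reflection pushes $U$ to $0$, so $g(y)=g(0)$ for $y<0$, and this is consistent with $Z^{(q+\gamma)}(y)=1$ and $W^{(q+\gamma)}(y-u)=0$ there. The expectations that arise are of exactly the type in Lemma~\ref{lem:SimilartoRonnieLemma}(i) with $p=q+\gamma$, namely with $f_{1,1}=W^{(q+\gamma)}(\cdot-u)$ and $f_{1,2}=Z^{(q+\gamma)}$. These produce $\ovl{w}_b^{(q+\gamma,-\gamma)}(x;u)$ and $\ovl{z}_b^{(q+\gamma,-\gamma)}(x)$, minus the $\lambda$-integral boundary terms. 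We then evaluate at $x=b$. Since $\W^{(q)}(0)\cdot(\text{stuff})$ may vanish in the unbounded variation case, we do not evaluate at $x=b$ directly; instead we solve for $g(b;u)$ by matching the two representations. Concretely, we use the second identity for general $x\in[b,a]$ and treat $C:=g(b;u)/Z^{(q+\gamma)}(b)$ as the unknown. The resulting expression has the form
\[
C\,\ovl{z}(x) - \ovl{w}(x;u) + \text{(terms)}\,\W^{(q)}(x-b)+\W^{(q)}\text{-potential terms}.
\]
Consistency at $x=b$ (by continuity, or by the regular/irregular cases) determines $C$.

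The key simplification uses Lemma~\ref{lem:RefractedConvolutionIdentities}(iii) in the form of Eq.~\eqref{eq:SecondGenFunc-RefractedWTilde1}, written with $p=q+\gamma$ and $q\to-\gamma$. This shows that for $u\ge b$ we have $\ovl{w}_b^{(q+\gamma,-\gamma)}(x;u)=\W^{(q)}(x-u)$. Moreover,
\[
\lambda\int_0^\infty e^{-\varphi_{q+\lambda}y}\W^{(q)}(y+a-u)\dd y
\]
combines with $\Z^{(q)}(a-u,\varphi_{q+\lambda})$ via Eq.~\eqref{eq:GeneralisedZFunc}. Since $\psi^*_q(\varphi_{q+\lambda})=\lambda$, this yields $e^{-\varphi_{q+\lambda}(u-a)}$ for $u>a$ and the $\ovl{w}$-integral for $u\le a$. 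Thus the $u\ge b$ terms merge with the $u<b$ terms into the unified $\ovl{w}$ form, and we read off
\[
C=\frac{\lambda\int_0^\infty e^{-\varphi_{q+\lambda}y}\ovl{w}(y+a;u)\dd y+\dots}{\lambda\int_0^\infty e^{-\varphi_{q+\lambda}y}\ovl{z}(y+a)\dd y}.
\]
Finally, we extend the result to $x\in[0,b]$ through the first identity, noting that $\ovl{w}=W^{(q+\gamma)}(\cdot-u)$ and $\ovl{z}=Z^{(q+\gamma)}$ on $[0,b]$.

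The main obstacle is rigorously justifying the unknown-constant step, i.e.\ determining $g(b;u)$ when $\W^{(q)}(0)=0$, where the process starting at $b$ immediately re-enters $(0,b)$. We would first handle the bounded variation case, where $x=b$ can be plugged in directly and the equation is solved linearly. The unbounded variation case would then follow by the strong approximation argument cited for $U$ (Proposition 2.1 of \cite{PY2018}). Alternatively, we would use the identity at $x=b$ as $x\downarrow b$ with an $\varepsilon$-shifted barrier argument, using $\tau_{b+\varepsilon}^-$, and then let $\varepsilon\to0$.
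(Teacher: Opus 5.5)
Your proposal is correct and follows essentially the same route as the paper: the strong Markov property at $\tau_{b,U}^+$ (giving the reflected potential with rate $q+\gamma$) and at $\tau_{b,Y}^-$ (using Lemma~\ref{lem:PoissonPotentialsAndFluctuations}(i) and Lemma~\ref{lem:SimilartoRonnieLemma}(i) with $p=q+\gamma$), then merging the $u\ge b$ terms via $\ovl{w}_b^{(q+\gamma,-\gamma)}(x;u)=\W^{(q)}(x-u)$ and $\psi^*_q(\varphi_{q+\lambda})=\lambda$, solving for $g(b;u)$ at $x=b$ in bounded variation, and passing to unbounded variation by strong approximation. One caveat: your intermediate idea of fixing $C$ by ``continuity at $x=b$'' cannot work on its own when $\W^{(q)}(0)=0$, because the $x=b$ equation then reduces to $0=0$; your fallback (bounded variation first, then strong approximation) is exactly what the paper does.
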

		
		%
			\begin{remark}\label{Rem:AboutHowFormsAreTheSame}\upshape{
				The identity $\mathbb{E}_x \bigl(\int_0^{\infty} \ee^{-q t} \1_{\left\{U_t \in \dd u, t < T_{b,U}^{+,\gamma} \wedge T_{a,U}^{+,\lambda} \right\}} \mathrm{d} t\bigr) \big/ \dd u$ can be proven by using similar arguments as in the proof below  with the necessary adjustments. The form of this identity can be obtained by replacing all instances of $q$ and $\gamma$ in the r.h.s.~of Eq.~\eqref{eq:ReflectedRefracted-DoubleKilledPotential} with $q+\gamma$ and $-\gamma$, respectively. To be more precise, we have that instances of $\varphi_{q+\lambda}$, $\overline{w}_b^{(q+\gamma,-\gamma)}$ and $\overline{z}_b^{(q+\gamma,-\gamma)}$ are to be replaced by $\varphi_{q +\gamma+\lambda}$, $\overline{w}_b^{(q,\gamma)}$ and $\overline{z}_b^{(q,\gamma)}$, respectively, in the r.h.s.~of Eq.~\eqref{eq:ReflectedRefracted-DoubleKilledPotential} to obtain the desired Poissonian potential measure.}
			\end{remark}

			\begin{proof}[Proof of Theorem \ref{Thm:Main-ReflectedRefractedDoubleKilledPoissonPotential}]
				We consider first the bounded variation case. Let  $e_\gamma \sim \text{Exp}(\gamma)$ be independent of all other random variables and define  $R^{(q)}_{\lambda,\gamma}(x,\dd u) :=\; \E_x\bigl( \int^\infty_0 e^{-q t} 1_{\{U_t \in \dd u, t < T_{b,U}^{-,\gamma} \wedge T_{a,U}^{+,\lambda} \}} \dd t \bigr)$. Recall that $\{U_t, t < \tau_{b,U}^+\} \overset{\dd}{=} \{\wt{X}_t, t < 
				\tau_{b,\wt{X}}^+\}$ w.r.t. $\PP_x$ for $x \in [0,b)$ and, moreover, observe that $T_{b,U}^{-,\gamma} \wedge T_{a,U}^{+,\lambda} \wedge \tau_{b,U}^+ =_\dd e_\gamma \wedge \tau_{b,U}^+$. Hence, by conditioning on $\tau_{b,U}^+$ and using the strong Markov property, it follows that 
				\begin{align}
					R^{(q)}_{\lambda,\gamma}(x,\dd u)
					=& \; \E_x\left( \int^\infty_0 \ee^{-q t} \1_{\{U_t \in \dd u, \; t < T_{b,U}^{-,\gamma} \wedge \tau_{b,U}^{+} \}} \dd t \right) + \E_x\left( \ee^{-q \tau_{b,U}^{+}} \1_{\{\tau_{b,U}^{+} < T_{b,U}^{-,\gamma}\}} \right) R^{(q)}_{\lambda,\gamma}(b, \dd u) \notag \\
					=& \; \E_x\left( \int^\infty_0 \ee^{-q t} \1_{\{U_t \in \dd u, \; t < e_{\gamma} \wedge \tau_{b,U}^{+} \}} \dd t \right) + \E_x\left( \ee^{-q \tau_{b,U}^{+}} \1_{\{\tau_{b,U}^{+} < e_\gamma \}} \right) R^{(q)}_{\lambda,\gamma}(b, \dd u) \notag \\
					=& \; \E_x\left( \int^\infty_0 \ee^{-(q+\gamma) t} \1_{\{\wt{X}_t \in \dd u, \; t < \tau_{b,\wt{X}}^{+} \}} \dd t \right) + \E_x\left( \ee^{-(q+\gamma) \tau_{b,\wt{X}}^{+}} \1_{\{\tau_{b,\wt{X}}^{+} < \infty \}} \right) R^{(q)}_{\lambda,\gamma}(b, \dd u)  \notag \\
					=& \; \frac{Z^{(q+\gamma)}(x)}{Z^{(q+\gamma)}(b)} \Bigl( {W}^{(q+\gamma)}(b-u) \1_{\{u \in [0,b)\}} \dd u + R^{(q)}_{\lambda,\gamma}(b, \dd u) \Bigr) - {W}^{(q+\gamma)}(x-u) \1_{\{u \in [0,b)\}} \dd u, \label{eq:Thm-SMPx<bIdentity1}
				\end{align}
				where the last equality follows by using the identities in Eq.~\eqref{eq:ReflectedBelowExitfromAbove}.
				
				Now, considering $x \in [b,a]$ and noticing that $\{U_t, t < \tau_{b,U}^-\} \overset{\dd}{=} \{Y_t, t < \tau_{b,Y}^-\}$ w.r.t.~$\PP_x$ for these $x$-values, we condition on $\tau_{b,Y}^-$ and use the strong Markov property to get
				\begin{align}
					R^{(q)}_{\lambda,\gamma}(x,\dd u) 
					=& \; \E_x\left( \int^\infty_0 e^{-q t} \1_{\{Y_t \in \dd u, \; t < \tau_{b,Y}^- \wedge T_{a,Y}^{+,\lambda} \}} \dd t \right) + \E_x\Bigl( e^{-q \tau_{b,Y}^-} \1_{ \{ \tau_{b,Y}^- < T_{a,Y}^{+,\lambda} \}} R^{(q)}_{\lambda,\gamma}(Y_{\tau_{b,Y}^-},\dd u) \Bigr) \notag \\
					=& \; \Bigl( \frac{\W^{(q)}(x-b)}{\Z^{(q)}(a-b, \varphi_{q+\lambda})} \Z^{(q)}(a-u, \varphi_{q+\lambda}) - \W^{(q)}(x-u)\Bigr) \1_{\{u \in [b,\infty)\}}  \dd u \notag \\
					&+ \frac{1}{Z^{(q+\gamma)}(b)} \Bigl( {W}^{(q+\gamma)}(b-u) \1_{\{u \in [0,b)\}} \dd u + R^{(q)}_{\lambda,\gamma}(b, \dd u) \Bigr) \; \E_x\Bigl( e^{-q \tau_{b,Y}^-} \1_{ \{ \tau_{b,Y}^- < T_{a,Y}^{+,\lambda} \}} Z^{(q+\gamma)}(Y_{\tau_{b,Y}^-}) \Bigr) \notag \\
					&- \E_x\Bigl( e^{-q \tau_{b,Y}^-} \1_{ \{ \tau_{b,Y}^- < T_{a,Y}^{+,\lambda} \}} W^{(q+\gamma)}(Y_{\tau_{b,Y}^-} - u) \Bigr) \1_{\{u \in [0,b)\}} \dd u , \notag
				\end{align}
			where, for the last equality, the first term follows by using Eq.~\eqref{eq:Landriault-Identity1} and the remaining terms follow by a substitution of Eq.~\eqref{eq:Thm-SMPx<bIdentity1}.
		
		We observe for the equation above that the first and second expectation terms can be evaluated by using Lemma \ref{lem:SimilartoRonnieLemma} (i) which yields
		\begin{align}
			R^{(q)}_{\lambda,\gamma}(x,\dd u) &= \; \Bigl( \frac{\W^{(q)}(x-b) }{\Z^{(q)}(a-b, \varphi_{q+\lambda})} \Z^{(q)}(a-u, \varphi_{q+\lambda})- \W^{(q)}(x-u)\Bigr) \1_{\{u \in [b,\infty)\}} \dd u \notag \\
			& \eqsp +  \frac{1}{Z^{(q+\gamma)}(b)} \Bigl( {W}^{(q+\gamma)}(b-u) \1_{\{u \in [0,b)\}} \dd u + R^{(q)}_{\lambda,\gamma}(b, \dd u) \Bigr) \notag \\
			& \eqsp \times \Bigl( \overline{z}_b^{(q+\gamma,-\gamma)}(x) - \frac{\W^{(q)}(x-b) }{\Z^{(q)}(a-b, \varphi_{q+\lambda})} \lambda \int^\infty_0 \ee^{-\varphi_{q+\lambda}y}\overline{z}_b^{(q+\gamma,-\gamma)}(y+a) \dd y \Bigr) \notag \\
			& \eqsp - \Bigl( \overline{w}_b^{(q+\gamma,-\gamma)}(x;u) - \frac{\W^{(q)}(x-b) }{\Z^{(q)}(a-b, \varphi_{q+\lambda})} \lambda \int^\infty_0 \ee^{-\varphi_{q+\lambda}y} \overline{w}_b^{(q+\gamma,-\gamma)}(y+a;u) \dd y \Bigr) \1_{\{u \in [0,b)\}} \dd u \notag \\
			&= \frac{1}{Z^{(q+\gamma)}(b)} \Bigl( {W}^{(q+\gamma)}(b-u) \1_{\{u \in [0,b)\}} \dd u + R^{(q)}_{\lambda,\gamma}(b, \dd u) \Bigr) \notag \\
			& \eqsp \times \Bigl( \overline{z}_b^{(q+\gamma,-\gamma)}(x) - \frac{\W^{(q)}(x-b) }{\Z^{(q)}(a-b, \varphi_{q+\lambda})} \lambda \int^\infty_0 \ee^{-\varphi_{q+\lambda}y}\overline{z}_b^{(q+\gamma,-\gamma)}(y+a) \dd y \Bigr) \notag \\
			& \eqsp - \Bigl( \Bigl[ \overline{w}_b^{(q+\gamma,-\gamma)}(x;u)\1_{\{u \in [0,b)\}} + \W^{(q)}(x-u)\1_{\{u \in [b,\infty)\}}   \Bigr] \notag \\
			& \eqsp - \frac{\W^{(q)}(x-b) }{\Z^{(q)}(a-b, \varphi_{q+\lambda})} \Bigl[ \lambda \int^\infty_0 \ee^{-\varphi_{q+\lambda}y} \overline{w}_b^{(q+\gamma,-\gamma)}(y+a;u) \1_{\{u \in [0,b)\}} \dd y  \notag \\
			&\eqsp + \Z^{(q)}(a-u, \varphi_{q+\lambda})\1_{\{u \in [b,\infty)\}} \Bigr] \Bigr) \dd u. \label{Thm-eq:AlmostLastRenewalEq}
		\end{align}
			Considering the terms of the above  equation separately so that we may simplify them, we notice from Eqs.~\eqref{eq:SecondGenFunc-RefractedWTilde1} and \eqref{eq:SecondGenFunc-RefractedWTilde2} that 
		\begin{align}
			\overline{w}_b^{(q+\gamma,-\gamma)}(x;u) \dd u 
			&= \overline{w}_b^{(q+\gamma,-\gamma)}(x;u)\1_{\{u \in [0,b)\}} \dd u +  \W^{(q)}(x-u)\1_{\{u \in [b,\infty)\}} \dd u, \label{eq:WBarUsefulIdentity1}
		\end{align}
		and from Eq.~\eqref{eq:GeneralisedZFunc} that 
		\begin{equation}
			\Z^{(q)}(a-u,\varphi_{q+\lambda})\1_{\{u \in [b,\infty)\}} \dd u = \; \Bigl( \lambda \int_0^\infty \ee^{-\varphi_{q+\lambda} y} \W^{(q)}(y + a - u) \dd y \1_{\{u \in [b,a]\}} + \ee^{-\varphi_{q+\lambda}(u-a)} \1_{\{u \in (a,\infty)\}} \Bigr) \dd u. \notag
		\end{equation}
		Hence, the last term of Eq.~\eqref{Thm-eq:AlmostLastRenewalEq} becomes
		\begin{align}
			& \frac{\W^{(q)}(x-b) }{\Z^{(q)}(a-b, \varphi_{q+\lambda})} \Bigl( \lambda \int^\infty_0 \ee^{-\varphi_{q+\lambda}y} \overline{w}_b^{(q+\gamma,-\gamma)}(y+a;u) \1_{\{u \in [0,b)\}} \dd y + \Z^{(q)}(a-u, \varphi_{q+\lambda})\1_{\{u \in [b,\infty)\}}  \Bigr) \dd u \notag \\
			&= \frac{\W^{(q)}(x-b) }{\Z^{(q)}(a-b, \varphi_{q+\lambda})}  \Bigl(\lambda \int_0^\infty \ee^{-\varphi_{q+\lambda} y}  \overline{w}_b^{(q+\gamma,-\gamma)}(y+a;u) \1_{\{u \in [0,a]\}} \dd y + \ee^{-\varphi_{q+\lambda}(u-a)} \1_{\{u \in (a,\infty)\}} \Bigr) \dd u. \notag
		\end{align}
			Therefore, using the above three equations,  Eq.~\eqref{Thm-eq:AlmostLastRenewalEq} becomes
		\begin{align}
			R^{(q)}_{\lambda,\gamma}(x,\dd u) &= \frac{1}{Z^{(q+\gamma)}(b)} \Bigl( {W}^{(q+\gamma)}(b-u) \1_{\{u \in [0,b)\}} \dd u + R^{(q)}_{\lambda,\gamma}(b, \dd u) \Bigr) \notag \\
			& \eqsp \times \Bigl( \overline{z}_b^{(q+\gamma,-\gamma)}(x) - \frac{\W^{(q)}(x-b) }{\Z^{(q)}(a-b, \varphi_{q+\lambda})} \lambda \int^\infty_0 \ee^{-\varphi_{q+\lambda}y}\overline{z}_b^{(q+\gamma,-\gamma)}(y+a) \dd y \Bigr) \notag \\
			& \eqsp - \Bigl( \overline{w}_b^{(q+\gamma,-\gamma)}(x;u) - \frac{\W^{(q)}(x-b) }{\Z^{(q)}(a-b, \varphi_{q+\lambda})} \Bigl[ \lambda \int_0^\infty \ee^{-\varphi_{q+\lambda} y} \overline{w}_b^{(q+\gamma,-\gamma)}(y + a; u) \dd y \1_{\{u \in [0,a]\}} \notag \\
			& \eqsp \eqsp + \ee^{-\varphi_{q+\lambda}(u-a)} \1_{\{u \in (a,\infty)\}} \Bigr] \Bigr) \dd u , \label{Thm-eq:AlmostLastRenewalEq2}
		\end{align}
		and hence what remains is to determine $R^{(q)}_{\lambda,\gamma}(b,\dd u)$. To do this, we note from  Eq.~\eqref{eq:RefractedWScaleFunc} and \eqref{eq:SecondGenFunc-RefractedWTilde1} that
		\begin{equation}
			\begin{aligned}
				\overline{w}_b^{(q+\gamma,-\gamma)}(b;u) \dd u &= w^{(q+\gamma)}(b;u)\1_{\{u \in [0,b)\}} \dd u + \W^{(q)}(b-u)\1_{\{u \in [b,\infty)\}} \dd u \\
				&= W^{(q+\gamma)}(b-u)\1_{\{u \in [0,b)\}} \dd u,
			\end{aligned} \notag
		\end{equation}
		where we have used that $\W^{(q)}(b-u) = 0$ for $u \in (b,\infty)$ and that $\{ u \in \{b\}\}$ has Lebesgue measure $0$. In addition, by using Eq.~\eqref{eq:SecondGenFunc-RefractedZTilde1} and then Eq.~\eqref{eq:RefractedZScaleFunc}, we have that $\overline{z}_b^{(q+\gamma,-\gamma)}(b) = Z^{(q+\gamma)}(b)$. Since $\W^{(q)}(0) \neq 0$ in the bounded variation case, letting $x=b$ in Eq.~\eqref{Thm-eq:AlmostLastRenewalEq2}, using the above two expressions for $\overline{w}_b^{(q+\gamma,-\gamma)}(b;u)$, $\overline{z}_b^{(q+\gamma,-\gamma)}(b)$ and solving w.r.t.~$R^{(q)}_{\lambda,\gamma}(b,\dd u)$ yields
			\begin{equation}
			\begin{aligned}
				&R^{(q)}_{\lambda,\gamma}(b,\dd u) \\
				&= \Bigl( \frac{\lambda \int^\infty_0 \ee^{-\varphi_{q+\lambda}y}\overline{w}_b^{(q+\gamma,-\gamma)}(y+a;u) \dd y \1_{\{u \in [0,a]\}} + \ee^{-\varphi_{q+\lambda}(u-a)}\1_{\{u \in (a,\infty)\}}}{\lambda \int^\infty_0 \ee^{-\varphi_{q+\lambda}y}\overline{z}_b^{(q+\gamma,-\gamma)}(y+a) \dd y} {Z}^{(q+\gamma)}(b) -  W^{(q+\gamma)}(b-u)\1_{\{u \in [0,b)\}}\Bigr) \dd u.
			\end{aligned} \notag
		\end{equation}
		The results for $x \in [0,b)$ and $x \in [b,a]$ then follow by substituting the above equation into Eqs.~\eqref{eq:Thm-SMPx<bIdentity1} and \eqref{Thm-eq:AlmostLastRenewalEq2}, respectively.
		To prove the unbounded variation case, we use strong approximation. 
		First recall that Proposition 2.1 in \cite{PY2018} proves that there exists a sequence of processes $\{(U_s^{(n)})_{s \geq 0}: n \geq 1\}$ that strongly approximates $U$; i.e.~that $\lim_{n \uparrow \infty} \sup_{0 \leq s \leq t}\bigl|  U_s - U_s^{(n)}\bigr| = 0$ for any $t >0$ a.s.~(see p.~210 of \cite{B1996} and  Definition 11 of \cite{KL2010} for more details).
		For $i \geq 1$, we denote $T_{b,U}^{-,\gamma}(n) := \min \{T_i^\gamma: U^{(n)}_{T_i^\gamma} < b \}$ and	$T_{a,U}^{+,\lambda}(n) := \min \{T_i^\lambda: U^{(n)}_{T_i^\lambda} > a \}$ 
		the stopping times corresponding to each process $U^{(n)}$. Then, it also holds for any time $s>0$ $\PP_x$-a.s.~(see proof of Lemma 4 (i) in \cite{BRP2026}) that $T_{b,U}^{-,\gamma}(n) \wedge s \rightarrow T_{b,U}^{-,\gamma} \wedge s$ and $T_{a,U}^{+,\lambda}(n) \wedge s \rightarrow T_{a,U}^{+,\lambda} \wedge s$ since the processes $N^\gamma$, $N^\lambda$ and $U^{(n)}$ are mutually independent for every $n \geq 1$. 
		
			Next we shall show that the potential measure of Theorem \ref{Thm:Main-ReflectedRefractedDoubleKilledPoissonPotential} (i.e.~the left-hand side of Eq.~\eqref{eq:ReflectedRefracted-DoubleKilledPotential}) of the bounded variation case converges to the one of unbounded variation. 
		By using the triangle inequality, we observe for $s >0$ that 
			\begin{align}
			&\Bigl| \int^\infty_0 \ee^{-qt} \PP_x\bigl( U^{(n)}_t \in \dd u, \; t < T_{b,U}^{-,\gamma} (n)\wedge T_{a,U}^{+,\lambda}(n) \bigr) \dd t - \int^\infty_0 \ee^{-qt} \PP_x\bigl( U_t \in \dd u, \; t < T_{b,U}^{-,\gamma} \wedge T_{a,U}^{+,\lambda} \bigr) \dd t \Bigr| \notag \\
			& \leq \int^\infty_s \ee^{-qt} \PP_x\bigl( U^{(n)}_t \in \dd u, \; t < T_{b,U}^{-,\gamma}(n) \wedge T_{a,U}^{+,\lambda}(n) \bigr) \dd t +  \int^\infty_s \ee^{-qt} \PP_x\bigl( U_t \in \dd u, \; t < T_{b,U}^{-,\gamma} \wedge T_{a,U}^{+,\lambda} \bigr) \dd t \notag \\
			& \eqsp + \int^\infty_0 \ee^{-qt} \Bigl| \PP_x\bigl( U^{(n)}_t \in \dd u, \; t < T_{b,U}^{-,\gamma}(n) \wedge T_{a,U}^{+,\lambda} (n)\wedge s \bigr) - \PP_x\bigl( U_t \in \dd u, \; t < T_{b,U}^{-,\gamma} \wedge T_{a,U}^{+,\lambda} \wedge s \bigr) \Bigr| \dd t,  \label{eq:Thm1-TriangleIneq2}
		\end{align}
		where the last inequality follows by noticing that 
		\begin{align*}
			&	\PP_x\bigl( U^{(n)}_t \in \dd u, \; t < T_{b,U}^{-,\gamma}(n) \wedge T_{a,U}^{+,\lambda}(n) \bigr) - \PP_x\bigl( U^{(n)}_t \in \dd u, \; t < T_{b,U}^{-,\gamma} (n)\wedge T_{a,U}^{+,\lambda}(n) \wedge s \bigr)\\
			&	 = \PP_x\bigl( U^{(n)}_t \in \dd u, \; t < T_{b,U}^{-,\gamma}(n) \wedge T_{a,U}^{+,\lambda}(n) \bigr) \1_{\{s < t\}}, 
		\end{align*}
		and similarly for the second integral term of the last inequality. 
		Now, since strong approximation yields that the last integral term in Eq.~\eqref{eq:Thm1-TriangleIneq2} converges to zero as $n \rightarrow \infty$, we can use Fatou's lemma and  Eq.~\eqref{eq:Thm1-TriangleIneq2} to get
		\begin{align}
			\limsup\limits_{n\rightarrow \infty} & \Bigl| \int^\infty_0 \ee^{-qt} \PP_x\bigl( U^{(n)}_t \in \dd u, \; t < T_{b,U}^{-,\gamma}(n) \wedge T_{a,U}^{+,\lambda}(n) \bigr) \dd t - \int^\infty_0 \ee^{-qt} \PP_x\bigl( U_t \in \dd u, \; t < T_{b,U}^{-,\gamma} \wedge T_{a,U}^{+,\lambda} \bigr) \dd t \Bigr| \notag \\
			&\leq  \int^\infty_s \ee^{-qt} \limsup\limits_{n\rightarrow \infty} \PP_x\bigl( U^{(n)}_t \in \dd u, \; t < T_{b,U}^{-,\gamma}(n) \wedge T_{a,U}^{+,\lambda}(n) \bigr) \dd t\notag \\
			&\quad  + \int^\infty_s \ee^{-qt} \PP_x\bigl( U_t \in \dd u, \; t < T_{b,U}^{-,\gamma} \wedge T_{a,U}^{+,\lambda} \bigr) \dd t, \notag
		\end{align}
			and thus, choosing $s$ large enough, we see
		\begin{equation}
			\int^\infty_0 \ee^{-qt} \PP_x\bigl( U^{(n)}_t \in \dd u, \; t < T_{b,U}^{-,\gamma} (n)\wedge T_{a,n}^{+,\lambda}(n) \bigr) \dd t  \rightarrow \int^\infty_0 \ee^{-qt} \PP_x\bigl( U_t \in \dd u, \; t < T_{b,U}^{-,\gamma} \wedge T_{a,U}^{+,\lambda} \bigr) \dd t. \notag
		\end{equation}
			It remains to show that the scale functions of the r.h.s.~of Eq.~\eqref{eq:ReflectedRefracted-DoubleKilledPotential} in the bounded variation case converges to its unbounded variation equivalent, which is shown in Section 3.2 of \cite{F2014}.
		\end{proof}
		\begin{theorem}\label{Thm:SecondMain-ReflectedRefractedDoubleKilledPoissonPotential}
			Let $q \geq 0$. Then, for  $\lambda,\gamma > 0$, $x,b \in [0,a]$ and $u \in [0,\infty)$,
			\begin{align}
				&\quad \mathbb{E}_x \Bigl(\int_0^{\infty} e^{-q t} 1_{\left\{U_t \in \dd u, t < T_{b,U}^{+,\gamma} \wedge T_{a,U}^{-,\lambda} \right\}} \mathrm{d} t\Bigr) \big/ \dd u\notag \\
				&=  \frac{\overline{z}_b^{(q+\lambda,\gamma-\lambda)}(x)  + \lambda \int^x_b \W^{(q+\gamma+\lambda)}(x-y)  \overline{z}_b^{(q+\lambda,\gamma-\lambda)}(y) \dd y }{\ee^{\varphi_{q+\gamma} a} \alpha_2^{(q+\gamma,q+\lambda)}(b) +  \lambda \int^a_b \Z^{(q+\gamma+\lambda)}(a-y,\varphi_{q+\gamma})\overline{z}_b^{(q+\lambda,\gamma-\lambda)}(y) \dd y}  \notag \\
				&\quad \times \Bigl( \Bigl[ \ee^{\varphi_{q+\gamma} a} \alpha_1^{(q+\gamma,q+\lambda)}(b;u) +  \lambda \int^a_b \Z^{(q+\gamma+\lambda)}(a-y,\varphi_{q+\gamma})\overline{w}_b^{(q+\lambda,\gamma-\lambda)}(y;u) \dd y  \Bigr] \1_{\{u \in [0,b)\}} \notag \\
				& \quad \quad + \Z^{(q+\gamma+\lambda)}(a-u,\varphi_{q+\gamma})\1_{\{u \in [b,\infty)\}} \Bigr) -  \Bigl(
				\overline{w}_b^{(q+\lambda,\gamma-\lambda)}(x;u) + \lambda \int^x_b \W^{(q+\gamma+\lambda)}(x-y)  \overline{w}_b^{(q+\lambda,\gamma-\lambda)}(y;u) \dd y \Bigr), \label{eq:ReflectedRefracted-DoubleKilledPotential3}
			\end{align}
			where $\overline{w}_b^{(\cdot,\cdot)}$, $\overline{z}_b^{(\cdot,\cdot)}$, $\alpha_1^{(\cdot,\cdot)}$ and $\alpha_2^{(\cdot,\cdot)}$ are defined in Eqs.~\eqref{eq:SecondGenFunc-RefractedWTilde1},  \eqref{eq:SecondGenFunc-RefractedZTilde1}, \eqref{eq:LimitOfWBar} and \eqref{eq:LimitOfZBar}, respectively. 
		\end{theorem}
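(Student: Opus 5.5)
We follow the same renewal/strong Markov scheme as in the proof of Theorem \ref{Thm:Main-ReflectedRefractedDoubleKilledPoissonPotential}. We first treat the bounded variation case and then pass to unbounded variation by strong approximation, exactly as before. Write $R(x,\dd u) := \E_x\bigl(\int_0^\infty e^{-qt}\1_{\{U_t\in\dd u,\,t<T_{b,U}^{+,\gamma}\wedge T_{a,U}^{-,\lambda}\}}\dd t\bigr)$.

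\textbf{Case $x\in[0,b)$.} Below $b$ the process $U$ coincides in law with $\wt X$ up to $\tau_{b,U}^+$. Before $\tau_{b,U}^+$, the $\gamma$-observer never kills, since $U<b$ there. The $\lambda$-observer kills at any observation epoch, since $U<b\le a$. Hence on $[0,\tau^+_{b,U})$ the killing is by an independent $e_\lambda$. Conditioning on $\tau_{b,U}^+$ and using Eq.~\eqref{eq:ReflectedBelowExitfromAbove} with rate $q+\lambda$ gives
\[
R(x,\dd u)=\frac{Z^{(q+\lambda)}(x)}{Z^{(q+\lambda)}(b)}\bigl(W^{(q+\lambda)}(b-u)\1_{\{u\in[0,b)\}}\dd u+R(b,\dd u)\bigr)-W^{(q+\lambda)}(x-u)\1_{\{u\in[0,b)\}}\dd u .
\]

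\textbf{Case $x\in[b,a]$.} Here $U$ agrees with $Y$ until $\tau_{b,Y}^-$. On $[b,\infty)$ the $\gamma$-observer kills at rate $\gamma$ everywhere. The $\lambda$-observer kills only while $Y<a$, i.e.\ through $T^{-,\lambda}_{a,Y}$. Thus $Y$ is killed at the extra uniform rate $\gamma$ and by $T^{-,\lambda}_{a,Y}$. We apply Lemma \ref{lem:PoissonPotentialsAndFluctuations} (ii) with $q$ replaced by $q+\gamma$ to get the potential term. This produces $\W^{(q+\gamma+\lambda)}$ and $\Z^{(q+\gamma+\lambda)}(\cdot,\varphi_{q+\gamma})$.

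At $\tau^-_{b,Y}$ we substitute the $x<b$ formula. This yields
\[
\E_x\bigl(e^{-(q+\gamma)\tau^-_{b,Y}}\1_{\{\tau^-_{b,Y}<T^{-,\lambda}_{a,Y}\}}f(Y_{\tau^-_{b,Y}})\bigr),\qquad f=Z^{(q+\lambda)}\ \text{or}\ W^{(q+\lambda)}(\cdot-u).
\]
These expectations are evaluated by Lemma \ref{lem:SimilartoRonnieLemma} (ii) with $(p,q)\mapsto(q+\lambda,q+\gamma)$. The relevant functions are $f_{2,\jj}^{(q+\lambda,q+\gamma)}$, i.e.\ $\ovl w_b^{(q+\lambda,\gamma-\lambda)}(\cdot;u)$ and $\ovl z_b^{(q+\lambda,\gamma-\lambda)}$, and the constants are $c_\jj=e^{\varphi_{q+\gamma}b}\alpha_\jj^{(q+\gamma,q+\lambda)}$. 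This is exactly the structure of Eq.~\eqref{eq:ReflectedRefracted-DoubleKilledPotential3}.

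\textbf{Recombination.} We then merge the $u\in[b,\infty)$ potential term with the $u\in[0,b)$ terms. The analogue of Eq.~\eqref{eq:WBarUsefulIdentity1} is used to absorb $\W^{(q+\gamma+\lambda)}(x-u)\1_{\{u\ge b\}}$. Specifically, $\ovl w_b^{(q+\lambda,\gamma-\lambda)}(x;u)+\lambda\int_b^x\W^{(q+\gamma+\lambda)}(x-y)\ovl w_b^{(q+\lambda,\gamma-\lambda)}(y;u)\dd y$ reduces to $\W^{(q+\gamma+\lambda)}(x-u)$ for $u\ge b$. This follows from $w(y;u)=\W^{(q+\lambda)}(y-u)$ for $u\ge b$ together with Eq.~\eqref{eq:LoeffenLTIdentity1} for $Y$. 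We also need to check that the prefactor $\W^{(q+\gamma+\lambda)}(x-b)/\Z^{(q+\gamma+\lambda)}(a-b,\varphi_{q+\gamma})$ is the same in both pieces.

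\textbf{Determining $R(b,\dd u)$.} We set $x=b$, using $\W(0)\neq0$ in bounded variation, together with $\ovl z_b(b)=Z^{(q+\lambda)}(b)$ and $\ovl w_b(b;u)\dd u=W^{(q+\lambda)}(b-u)\1_{\{u<b\}}\dd u$. We then solve the resulting linear equation for $R(b,\dd u)$. The common factor $e^{\varphi_{q+\gamma}b}$ turns $c_\jj e^{\varphi_{q+\gamma}(a-b)}$ into $e^{\varphi_{q+\gamma}a}\alpha_\jj$. Substituting $R(b,\dd u)$ back into both regimes should give the claimed single formula for all $x\in[0,a]$. For $x<b$ this uses that the $\lambda\int_b^x$ terms vanish and that $\ovl z_b=Z^{(q+\lambda)}$ there.

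\textbf{Main obstacle.} The main obstacle is the bookkeeping in the recombination step. One must verify that the $u\ge b$ part of the potential from Lemma \ref{lem:PoissonPotentialsAndFluctuations} (ii), namely $\Z^{(q+\gamma+\lambda)}(a-u,\varphi_{q+\gamma})$, aligns with the $u<b$ bracket involving $\alpha_1$. This amounts to checking that the sub-$b$ contributions indeed reduce to the stated form. The unbounded variation case is handled as in Theorem \ref{Thm:Main-ReflectedRefractedDoubleKilledPoissonPotential}, using strong approximation and the convergence of scale functions from \cite{F2014}.
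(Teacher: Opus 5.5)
Your proposal is correct and follows the paper's proof essentially step for step. The steps match in order: the renewal at $\tau_{b,U}^{+}$ with effective killing $e_\lambda$ below $b$; the renewal at $\tau_{b,Y}^{-}$ with killing $e_\gamma\wedge T_{a,Y}^{-,\lambda}$ above $b$, via Lemma~\ref{lem:PoissonPotentialsAndFluctuations}~(ii) with $q\mapsto q+\gamma$ and Lemma~\ref{lem:SimilartoRonnieLemma}~(ii) with $(p,q)\mapsto(q+\lambda,q+\gamma)$; the reduction of the $u\ge b$ part to $\W^{(q+\gamma+\lambda)}(x-u)$; solving at $x=b$ using $\W^{(q+\gamma+\lambda)}(0)\neq 0$; and strong approximation for unbounded variation. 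At $x=b$ you use $\ovl{z}_b^{(q+\lambda,\gamma-\lambda)}(b)=Z^{(q+\lambda)}(b)$ and $\ovl{w}_b^{(q+\lambda,\gamma-\lambda)}(b;u)\,\dd u=W^{(q+\lambda)}(b-u)\1_{\{u\in[0,b)\}}\,\dd u$; this is the correct form of that step, which the paper's text states with the misprinted parameters $(q+\gamma,-\gamma)$.
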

		\begin{remark}\upshape{
			The identity $\mathbb{E}_x \bigl(\int_0^{\infty} e^{-q t} 1_{\left\{U_t \in \dd u, t < T_{b,U}^{-,\gamma} \wedge T_{a,U}^{-,\lambda} \right\}} \mathrm{d} t\bigr) \big/ \dd u$ can be proven by using similar arguments as in the proof below  with the necessary adjustments. The form of this identity can be obtained by replacing all instances of $q$ and $\gamma$ in the r.h.s.~of Eq.~\eqref{eq:ReflectedRefracted-DoubleKilledPotential3} with $q+\gamma$ and $-\gamma$, respectively (see Remark \ref{Rem:AboutHowFormsAreTheSame} for further details).}
		\end{remark}	
		
		\begin{proof}
			We consider first the bounded variation case. Let  $e_\eta \sim \text{Exp}(\eta)$, for $\eta = \lambda, \gamma$, be independent of all other random variables and define  $G^{(q)}_{\lambda,\gamma}(x,\dd u) :=\; \E_x\bigl( \int^\infty_0 e^{-q t} 1_{\{U_t \in \dd u, t < T_{b,U}^{+,\gamma} \wedge T_{a,U}^{-,\lambda} \}} \dd t \bigr)$. Recall that $\{U_t, t < \tau_{b,U}^+\} \overset{\dd}{=} \{\wt{X}_t, t < 
			\tau_{b,\wt{X}}^+\}$ w.r.t. $\PP_x$ for $x \in [0,b)$ and, moreover, observe that $T_{b,U}^{+,\gamma} \wedge T_{a,U}^{-,\lambda} \wedge \tau_{b,U}^+ =_\dd e_\lambda \wedge \tau_{b,U}^+$. Hence, by conditioning on $\tau_{b,U}^+$ and using the strong Markov property, it follows that 
			\begin{align}
				G^{(q)}_{\lambda,\gamma}(x,\dd u)
				=& \; \E_x\left( \int^\infty_0 \ee^{-q t} \1_{\{U_t \in \dd u, \; t < e_{\lambda} \wedge \tau_{b,U}^{+} \}} \dd t \right) + \E_x\left( \ee^{-q \tau_{b,U}^{+}} \1_{\{\tau_{b,U}^{+} < e_\lambda \}} \right) G^{(q)}_{\lambda,\gamma}(b, \dd u) \notag \\
				=& \; \E_x\left( \int^\infty_0 \ee^{-(q+\lambda) t} \1_{\{\wt{X}_t \in \dd u, \; t < \tau_{b,\wt{X}}^{+} \}} \dd t \right) + \E_x\left( \ee^{-(q+\lambda) \tau_{b,\wt{X}}^{+}} \1_{\{\tau_{b,\wt{X}}^{+} < \infty \}} \right) G^{(q)}_{\lambda,\gamma}(b, \dd u)  \notag \\
				=& \; \frac{Z^{(q+\lambda)}(x)}{Z^{(q+\lambda)}(b)} \Bigl( {W}^{(q+\lambda)}(b-u) \1_{\{u \in [0,b)\}} \dd u + G^{(q)}_{\lambda,\gamma}(b, \dd u) \Bigr) - {W}^{(q+\lambda)}(x-u) \1_{\{u \in [0,b)\}} \dd u, \label{eq:Thm2-SMPx<bIdentity1}
			\end{align}
			where the last equality follows by using the identities in Eq.~\eqref{eq:ReflectedBelowExitfromAbove}.
			
			Considering $x \in [b,a]$, we notice that $T_{b,U}^{+,\gamma} \wedge T_{a,U}^{-,\lambda} \wedge \tau_{b,U}^- \overset{\dd}{=} e_\gamma \wedge T_{a,U}^{-,\lambda} \wedge \tau_{b,U}^-$ and also that $\{U_t, t < \tau_{b,U}^-\} \overset{\dd}{=} \{Y_t, t < \tau_{b,Y}^-\}$ w.r.t.~$\PP_x$ for these $x$-values. Hence, conditioning on $\tau_{b,Y}^-$ and using the strong Markov property, 
			\begin{align}
				G^{(q)}_{\lambda,\gamma}(x,\dd u) 
				=& \; \E_x\Bigl( \int^\infty_0 e^{-q t} \1_{\{U_t \in \dd u, \; t < T_{a,U}^{-,\lambda} \wedge \tau_{b,u}^- \wedge e_\gamma  \}} \dd t \Bigr) + \E_x\Bigl( e^{-q \tau_{b,U}^-} \1_{ \{ \tau_{b,U}^- < T_{a,U}^{-,\lambda} \wedge e_\gamma \}} G^{(q)}_{\lambda,\gamma}(U_{\tau_{b,U}^-},\dd u) \Bigr) \notag \\
				=& \; \E_x\left( \int^\infty_0 e^{-(q+\gamma) t} \1_{\{Y_t \in \dd u, \; t <  T_{a,Y}^{-,\lambda} \wedge \tau_{b,Y}^- \}} \dd t \right) + \E_x\Bigl( e^{-(q+\gamma) \tau_{b,Y}^-} \1_{ \{ \tau_{b,Y}^- < T_{a,Y}^{-,\lambda} \}} G^{(q)}_{\lambda,\gamma}(Y_{\tau_{b,Y}^-},\dd u) \Bigr) \notag \\
				=& \; \Bigl( \frac{\W^{(q+\gamma+\lambda)}(x-b)}{\Z^{(q+\gamma+\lambda)}(a-b, \varphi_{q+\gamma})} \Z^{(q+\gamma+\lambda)}(a-u, \varphi_{q+\gamma}) - \W^{(q+\gamma+\lambda)}(x-u)\Bigr) \1_{\{u \in [b,\infty)\}}  \dd u \notag \\
				&+ \frac{1}{Z^{(q+\lambda)}(b)} \Bigl( {W}^{(q+\lambda)}(b-u) \1_{\{u \in [0,b)\}} \dd u + G^{(q)}_{\lambda,\gamma}(b, \dd u) \Bigr) \; \E_x\Bigl( e^{-(q+\gamma) \tau_{b,Y}^-} \1_{ \{ \tau_{b,Y}^- < T_{a,Y}^{-,\lambda} \}} Z^{(q+\lambda)}(Y_{\tau_{b,Y}^-}) \Bigr) \notag \\
				&- \E_x\Bigl( e^{-(q+\gamma) \tau_{b,Y}^-} \1_{ \{ \tau_{b,Y}^- < T_{a,Y}^{-,\lambda} \}} W^{(q+\lambda)}(Y_{\tau_{b,Y}^-} - u) \Bigr) \1_{\{u \in [0,b)\}} \dd u , \notag
			\end{align}
			where, for the last equality, the first term follows by using Eq.~\eqref{eq:Landriault-Identity3} and the remaining terms follow by a substitution of Eq.~\eqref{eq:Thm2-SMPx<bIdentity1}.
			
			We observe for the equation above that the first and second expectation terms can be evaluated by using Lemma \ref{lem:SimilartoRonnieLemma} (ii) which yields
			\begin{align}
				&G^{(q)}_{\lambda,\gamma}(x,\dd u) = \;  \Bigl( \frac{\W^{(q+\gamma+\lambda)}(x-b)}{\Z^{(q+\gamma+\lambda)}(a-b, \varphi_{q+\gamma})} \Z^{(q+\gamma+\lambda)}(a-u, \varphi_{q+\gamma}) - \W^{(q+\gamma+\lambda)}(x-u)\Bigr) \1_{\{u \in [b,\infty)\}}  \dd u \notag \\
				& \eqsp +  \frac{1}{Z^{(q+\lambda)}(b)} \Bigl( {W}^{(q+\lambda)}(b-u) \1_{\{u \in [0,b)\}} \dd u + G^{(q)}_{\lambda,\gamma}(b, \dd u) \Bigr) \notag \\
				& \eqsp \times \Bigl[ \overline{z}_b^{(q+\lambda,\gamma-\lambda)}(x) + \lambda \int^x_b \W^{(q+\gamma+\lambda)}(x-y)\overline{z}_b^{(q+\lambda,\gamma-\lambda)}(y) \dd y \notag \\
				&\quad \quad  - \frac{\W^{(q+\gamma+\lambda)}(x-b)}{\Z^{(q+\gamma+\lambda)}(a-b,\varphi_{q+\gamma})}\Bigl( \ee^{\varphi_{q+\gamma} a} \alpha_2^{(q+\gamma,q+\lambda)}(b)  + \lambda \int^a_b \Z^{(q+\gamma+\lambda)}(a-y,\varphi_{q+\gamma}) \overline{z}_b^{(q+\lambda,\gamma-\lambda)}(y) \dd y \Bigr) \Bigr] \notag \\
				& \eqsp - \Bigl[ \overline{w}_b^{(q+\lambda,\gamma-\lambda)}(x;u) + \lambda \int^x_b \W^{(q+\gamma+\lambda)}(x-y)\overline{w}_b^{(q+\lambda,\gamma-\lambda)}(y;u) \dd y \notag \\
				&\quad \quad  - \frac{\W^{(q+\gamma+\lambda)}(x-b)}{\Z^{(q+\gamma+\lambda)}(a-b,\varphi_{q+\gamma})}\Bigl( \ee^{\varphi_{q+\gamma} a} \alpha_1^{(q+\gamma,q+\lambda)}(b;u)  + \lambda \int^a_b \Z^{(q+\gamma+\lambda)}(a-y,\varphi_{q+\gamma}) \overline{w}_b^{(q+\lambda,\gamma-\lambda)}(y;u) \dd y \Bigr) \Bigr] \1_{\{u \in [0,b)\}} \dd u. 
			\end{align}
			Now, we simplify the above equation by noticing for $u \geq b$ that
			\begin{align*}
				\overline{w}_b^{(q+\lambda,\gamma-\lambda)}(x;u) + &\lambda \int^x_b \W^{(q+\gamma+\lambda)}(x-y)\overline{w}_b^{(q+\lambda,\gamma-\lambda)}(y;u) \dd y \notag \\
				&= 	\W^{(q+\gamma)}(x-u) + \lambda \int^x_b \W^{(q+\gamma+\lambda)}(x-y)\W^{(q+\gamma)}(y-u) \dd y \\
				&= \W^{(q+\gamma+\lambda)}(x-u),
			\end{align*}
			which follows by using Eqs.~\eqref{eq:LoeffenLTIdentity1} and \eqref{eq:SecondGenFunc-RefractedWTilde2}, and hence coincides with the $\W^{(q+\gamma+\lambda)}(x-u) \1_{\{u \in [b,\infty)\}}$ term in the above. 
			Thus, using this and grouping the remaining quantities, we finally arrive at the equation
			\begin{align}
				&G^{(q)}_{\lambda,\gamma}(x,\dd u) = \frac{1}{Z^{(q+\lambda)}(b)} \Bigl( {W}^{(q+\lambda)}(b-u) \1_{\{u \in [0,b)\}} \dd u + G^{(q)}_{\lambda,\gamma}(b, \dd u) \Bigr) \notag \\
				& \eqsp \times \Bigl[ \overline{z}_b^{(q+\lambda,\gamma-\lambda)}(x) + \lambda \int^x_b \W^{(q+\gamma+\lambda)}(x-y)\overline{z}_b^{(q+\lambda,\gamma-\lambda)}(y) \dd y \notag \\
				&\quad \quad  - \frac{\W^{(q+\gamma+\lambda)}(x-b)}{\Z^{(q+\gamma+\lambda)}(a-b,\varphi_{q+\gamma})}\Bigl( \ee^{\varphi_{q+\gamma} a} \alpha_2^{(q+\gamma,q+\lambda)}(b)  + \lambda \int^a_b \Z^{(q+\gamma+\lambda)}(a-y,\varphi_{q+\gamma}) \overline{z}_b^{(q+\lambda,\gamma-\lambda)}(y) \dd y \Bigr) \Bigr] \notag \\
				& \eqsp - \Bigl( \overline{w}_b^{(q+\lambda,\gamma-\lambda)}(x;u) + \lambda \int^x_b \W^{(q+\gamma+\lambda)}(x-y)\overline{w}_b^{(q+\lambda,\gamma-\lambda)}(y;u) \dd y \Bigr)  \notag \\
				&\quad + \frac{\W^{(q+\gamma+\lambda)}(x-b)}{\Z^{(q+\gamma+\lambda)}(a-b,\varphi_{q+\gamma})}\Bigl(\Bigl[ \ee^{\varphi_{q+\gamma} a} \alpha_1^{(q+\gamma,q+\lambda)}(b;u)  + \lambda \int^a_b \Z^{(q+\gamma+\lambda)}(a-y,\varphi_{q+\gamma}) \overline{w}_b^{(q+\lambda,\gamma-\lambda)}(y;u) \dd y \Bigr]\1_{\{u \in [0,b)\}} \notag \\
				&\quad \quad+ \Z^{(q+\gamma+\lambda)}(a-u, \varphi_{q+\gamma}) \1_{\{u \in [b,\infty) \}}\Bigr)  \dd u, \label{Thm2-eq:AlmostLastRenewalEq} 
			\end{align}
			and hence what remains is to determine $G^{(q)}_{\lambda,\gamma}(b,\dd u)$. To do this, we notice as in the proof of Theorem \ref{Thm:Main-ReflectedRefractedDoubleKilledPoissonPotential} that  $\overline{w}_b^{(q+\gamma,-\gamma)}(b;u) \dd u = W^{(q+\gamma)}(b-u)\1_{\{u \in [0,b)\}} \dd u$ and $\overline{z}_b^{(q+\gamma,-\gamma)}(b) = Z^{(q+\gamma)}(b)$. Since $\W^{(q)}(0) \neq 0$ in the bounded variation case, we let $x=b$ in Eq.~\eqref{Thm-eq:AlmostLastRenewalEq2}, use the two previously mentioned expressions for $\overline{w}_b^{(q+\gamma,-\gamma)}(b;u)$ and $\overline{z}_b^{(q+\gamma,-\gamma)}(b)$, and solve w.r.t.~$G^{(q)}_{\lambda,\gamma}(b,\dd u)$ to obtain
			\begin{equation}
				\begin{aligned}
					&G^{(q)}_{\lambda,\gamma}(b,\dd u) \big/ \dd u = \frac{Z_b^{(q+\lambda)}(b)}{\ee^{\varphi_{q+\gamma} a} \alpha_2^{(q+\gamma,q+\lambda)}(b) +  \lambda \int^a_b \Z^{(q+\gamma+\lambda)}(a-y,\varphi_{q+\gamma})\overline{z}_b^{(q+\lambda,\gamma-\lambda)}(y) \dd y}  \notag \\
					&\quad \quad \times \Bigl( \Bigl[ \ee^{\varphi_{q+\gamma} a} \alpha_1^{(q+\gamma,q+\lambda)}(b;u) +  \lambda \int^a_b \Z^{(q+\gamma+\lambda)}(a-y,\varphi_{q+\gamma})\overline{w}_b^{(q+\lambda,\gamma-\lambda)}(y;u) \dd y  \Bigr] \1_{\{u \in [0,b)\}} \notag \\
					& \quad \quad + \Z^{(q+\gamma+\lambda)}(a-u,\varphi_{q+\gamma})\1_{\{u \in [b,\infty)\}} \Bigr) -  
					W^{(q+\lambda)}(x-u) \1_{\{u \in [0,b)\}}.
				\end{aligned} \notag
			\end{equation}
			The results for $x \in [0,b)$ and $x \in [b,a]$ then follow by substituting the above equation into Eqs.~\eqref{eq:Thm2-SMPx<bIdentity1} and \eqref{Thm2-eq:AlmostLastRenewalEq}, respectively.
			
			To prove the unbounded variation case, we use strong approximation in a similar way as in the proof of Theorem \ref{Thm:Main-ReflectedRefractedDoubleKilledPoissonPotential}. 
		\end{proof}

		\section{Poissonian potential measures with one-sided killing} \label{Sec:LimitsofMainTheorems}
		In this section, we use Theorem \ref{Thm:Main-ReflectedRefractedDoubleKilledPoissonPotential} and \ref{Thm:SecondMain-ReflectedRefractedDoubleKilledPoissonPotential} along with further limiting identities of the scale functions to derive one-sided Poissonian potential measures for the refracted-reflected  process $U$. 	
		To derive these identities, we shall need to prove that $T_{a,X}^{+,\lambda} \overset{\mathrm{p}}{\rightarrow} \tau_{a,X}^{+}$, where $\overset{\mathrm{p}}{\rightarrow}$ means convergence in probability (and similarly $T_{b,X}^{-,\lambda}\overset{\mathrm{p}}{\rightarrow} \tau_{b,X}^{-}$) as the observation rate $\lambda \rightarrow \infty$. This is the purpose of the next lemma. 
		
		\begin{lemma}\label{lem:ConvergenceInProbofPoisTimes}
			Let $a,b \in \R$ and let $T_{a,X}^{+,\lambda}$ and $T_{b,X}^{-,\lambda}$ be defined as previously. Then, $T_{a,X}^{+,\lambda} \overset{\mathrm{p}}{\rightarrow} \tau_{a,X}^{+}$ and  $T_{b,X}^{-,\lambda} \overset{\mathrm{p}}{\rightarrow} \tau_{b,X}^{-}$ as $\lambda \rightarrow \infty$. 
		\end{lemma}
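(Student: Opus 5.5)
We prove only the statement for $T_{a,X}^{+,\lambda}$; the one for $T_{b,X}^{-,\lambda}$ follows by the same argument with inequalities reversed. We aim at the stronger claim that $T_{a,X}^{+,\lambda}\to\tau_{a,X}^+$ $\PP_x$-almost surely on $\{\tau_{a,X}^+<\infty\}$. Since $\tau_{a,X}^+\le T_{a,X}^{+,\lambda}$ always, it suffices to show that for every $\epsilon>0$
\[
\PP_x\bigl(T_{a,X}^{+,\lambda}>\tau_{a,X}^{+}+\epsilon,\ \tau_{a,X}^+<\infty\bigr)\to 0 .
\]
On $\{\tau_{a,X}^+=\infty\}$ both times equal $\infty$ as soon as the Poisson epochs see no exceedance of $a$. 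Indeed, $\tau^+_{a,X}=\infty$ means $X_t\le a$ for all $t$, so no observation exceeds $a$ and $T^{+,\lambda}_{a,X}=\infty$ too. That set therefore contributes nothing.

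\emph{Step 1: a positive-length excursion above $a$.} We will show that, a.s. on $\{\tau_{a,X}^+<\infty\}$, for each $\epsilon>0$ the set $A_\epsilon:=\{t\in(\tau_{a,X}^+,\tau_{a,X}^++\epsilon): X_t>a\}$ has positive Lebesgue measure. Because $X$ is spectrally negative, it creeps upward, so $X_{\tau_{a,X}^+}=a$. By the strong Markov property, $\{X_{\tau_{a,X}^++s}-a\}_{s\ge0}$ is a copy of $X$ started at $0$, independent of $\mathcal F_{\tau_{a,X}^+}$. So we need that $X$ started at $0$ spends positive time in $(0,\infty)$ during $(0,\epsilon)$. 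In the unbounded variation case, $0$ is regular for $(0,\infty)$; and in either case $\tau_{0,X}^+=0$ a.s. for a spectrally negative process that is not the negative of a subordinator. Right-continuity of paths then does the rest: if $X_s>0$ for some $s<\epsilon$, then $X>0$ on a small interval $[s,s+\eta)$. Bounded variation with positive drift is immediate as well, since then $X_t>0$ for all small $t$.

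\emph{Step 2: Poisson epochs hit $A_\epsilon$.} Conditionally on the path of $X$ (independent of $N^\lambda$), the number of epochs of $N^\lambda$ falling in $A_\epsilon$ is Poisson with mean $\lambda\,\mathrm{Leb}(A_\epsilon)$. Hence
\[
\PP_x\bigl(T_{a,X}^{+,\lambda}>\tau_{a,X}^++\epsilon,\ \tau^+_{a,X}<\infty\bigr)\le \E_x\bigl(\ee^{-\lambda\,\mathrm{Leb}(A_\epsilon)}\1_{\{\tau^+_{a,X}<\infty\}}\bigr).
\]
By Step 1 and dominated convergence, the right-hand side tends to $0$ as $\lambda\to\infty$. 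This gives convergence in probability. One subtlety is the observation at time $0$ when $x>a$. There $\tau^+_{a,X}=0$ and $A_\epsilon$ trivially has positive measure by right-continuity, so the case is covered.

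\emph{Expected obstacle.} The delicate point is Step 1: one must justify that the creeping level $a$ is left immediately upward, and also handle measurability of $\mathrm{Leb}(A_\epsilon)$. I would resolve the first via the regularity of $(0,\infty)$ for spectrally negative processes (Chapter 8 of \cite{K2014}) together with the strong Markov property. For the downward case the analogue is less trivial, because $X$ may jump strictly below $b$. In that situation $X_{\tau_{b,X}^-}<b$, and right-continuity gives positive time below $b$ directly. When $X$ instead creeps down to $b$ (unbounded variation with $\sigma>0$), we use that $0$ is regular for $(-\infty,0)$.
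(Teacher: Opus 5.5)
Your proof is correct and is essentially the paper's argument. Both find a set of times of positive length in $(\tau_{a,X}^{+},\tau_{a,X}^{+}+\epsilon)$ on which $X>a$, and both bound the probability that the independent clock misses it by $\E_x\bigl(\ee^{-\lambda\,\mathrm{Leb}(A_\epsilon)}\bigr)\to 0$. Using $\mathrm{Leb}(A_\epsilon)$ is slightly cleaner than the paper's random interval $[s,s+\delta)$, since it needs no measurable choice of $s,\delta$. Your Step 1 and the downward case analysis can be skipped, because the definition of the infimum plus right-continuity already gives such an interval pathwise, without creeping or regularity.

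One correction: the condition $\PP_x(T_{a,X}^{+,\lambda}>\tau_{a,X}^{+}+\epsilon,\ \tau_{a,X}^{+}<\infty)\to 0$ is exactly convergence in probability, so it does not give the almost sure ``stronger claim'' you announce. That claim would need a coupling of the clocks across $\lambda$, e.g.\ nested ones making $T_{a,X}^{+,\lambda}$ monotone. Drop it; the lemma only asks for convergence in probability.
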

		
		\begin{proof}
			We shall condition on the event $\{\tau_{a,X}^+ < \infty\}$ throughout the proof since we have that $\tau_{a,X}^+ \leq T_{a,X}^{+,\lambda}$. Now, for all $\varepsilon > 0$, we have that there exists at least one pair $s,\delta > 0$ s.t.~$X_t > a$ for all $t \in [s, s+\delta) \subset (\tau_{a,X}^+, \tau_{a,X}^+ + \varepsilon)$, $\PP_x$-a.s.~Indeed, this follows directly from the definition of $\tau_{a,X}^+$ and the right-continuity of $X$. Clearly  $s$ and $\delta$ are random, determined by the behaviour of $X$ and independent of the Poisson process $N$. 
			
			Now, define $\mathcal{E}_a := \{t > 0 : X_t > a\}$. Then $[s,s+\delta) \subset \mathcal{E}_a$, and so $\mathcal{E}_a$ is a non-empty countable union of disjoint intervals (this follows by the right-continuity of $X$ and the density of the rational numbers in the real line). Hence, we have for all $\varepsilon > 0$ that
			\begin{align*}
				\PP_x(T_{a,X}^{+,\lambda} - \tau_{a,X}^+ > \varepsilon) &= \PP_x(\,\{\text{There are no Poisson arrivals in } \mathcal{E}_a \cap ( \tau_{a,X}^+,  \tau_{a,X}^+ + \varepsilon)\}\,) \\
				&\leq \PP_x( \, \{\text{There are no Poisson arrivals in } [s,s+\delta) \} \, ) \\
				&= \PP_x( N(s+\delta) - N(s) = 0 ) = \E_{x} \bigl( \PP_x( N(s+\delta) - N(s) = 0 \;\vert \; s,\delta \; ) \bigr) \\
				&= \E_{x} \bigl( \PP_x( N(\delta) = 0 \;\vert \; \delta \; ) \bigr) 
				= \int^\infty_0 \ee^{-\lambda t} \, \PP_x(\delta \in \dd t),
			\end{align*}	
			and thus taking $\lambda \rightarrow \infty$ on both sides of the inequality and using dominated convergence yields the assertion. The proof for $T_{b,X}^{-,\lambda} \overset{\mathrm{p}}{\rightarrow} \tau_{b,X}^{-}$ as $\lambda \rightarrow \infty$ follows similarly. 
		\end{proof}
		\noindent We may now derive the limit identities of the previously derived Poissonian potential measures. 
		
		\color{black}
		\begin{proposition}\label{Thm:MainPoissonPotentialAbove} The following results hold.
			\begin{itemize}
				\item[\upshape{(i)}] 	Let $q \geq 0$. Then, for $\lambda > 0$, $x,b \in [0,a]$ and $u \in [0,\infty)$,
				\begin{equation}
					\begin{aligned}
						&\mathbb{E}_x\Bigl(\int_0^{\infty} \ee^{-q t} 1_{\left\{U_t \in \dd u, \; t < T_{a,U}^{+,\lambda} \right\}} \mathrm{d} t\Bigr) \big/ \dd u \\
						&=  \frac{\lambda \int^\infty_0 \ee^{-\varphi_{q+\lambda}y} w^{(q)}(y+a;u) \dd y \1_{\{u \in [0,a]\}} + \ee^{-\varphi_{q+\lambda}(u-a)}\1_{\{u \in (a,\infty)\}}}{\lambda \int^\infty_0 \ee^{-\varphi_{q+\lambda}y} z^{(q)}(y+a) \dd y} z^{(q)}(x) -  w^{(q)}(x;u) .
					\end{aligned}  \label{eq:ReflectedRefracted-KilledPotential}
				\end{equation}
				\item[\upshape{(ii)}] Let $q \geq 0$. Then, for $\lambda > 0$, $x,b \in [0,a]$ and $u \in [0,\infty)$,
				\begin{align}
					&\quad \mathbb{E}_x \Bigl(\int_0^{\infty} e^{-q t} 1_{\left\{U_t \in \dd u, t <  T_{a,U}^{-,\lambda} \right\}} \mathrm{d} t\Bigr) \big/ \dd u\notag \\
					&=  \frac{\overline{z}_b^{(q+\lambda,-\lambda)}(x)  + \lambda \int^x_b \W^{(q+\lambda)}(x-y)  \overline{z}_b^{(q+\lambda,-\lambda)}(y) \dd y }{\ee^{\varphi_{q} a} \alpha_2^{(q,q+\lambda)}(b) +  \lambda \int^a_b \Z^{(q+\lambda)}(a-y,\varphi_{q})\overline{z}_b^{(q+\lambda,-\lambda)}(y) \dd y}  \notag \\
					&\quad \times \Bigl( \Bigl[ \ee^{\varphi_{q} a} \alpha_1^{(q,q+\lambda)}(b;u) +  \lambda \int^a_b \Z^{(q+\lambda)}(a-y,\varphi_{q})\overline{w}_b^{(q+\lambda,-\lambda)}(y;u) \dd y  \Bigr] \1_{\{u \in [0,b)\}} \notag \\
					& \quad \quad + \Z^{(q+\lambda)}(a-u,\varphi_{q})\1_{\{u \in [b,\infty)\}} \Bigr) -  \Bigl(
					\overline{w}_b^{(q+\lambda,-\lambda)}(x;u) + \lambda \int^x_b \W^{(q+\lambda)}(x-y)  \overline{w}_b^{(q+\lambda,-\lambda)}(y;u) \dd y \Bigr). \label{eq:ReflectedRefracted-KilledPotentialforTa^-}
				\end{align}
				\item[\upshape{(iii)}] For $q, \gamma \geq 0$ and $x,b,u \in [0,a]$,
				\begin{equation}
					\mathbb{E}_x\Bigl(\int_0^{\infty} \ee^{-q t} 1_{\left\{U_t \in \dd u, t < T_{b,U}^{-,\gamma} \wedge \tau_{a,U}^+ \right\}} \mathrm{d} t\Bigr) \big/ \dd u =   \frac{\ovl{z}^{(q+\gamma,-\gamma)}_b(x)}{\ovl{z}^{(q+\gamma,-\gamma)}_b(a)} \ovl{w}_b^{(q+\gamma,-\gamma)}(a;u) -  \ovl{w}_b^{(q+\gamma,-\gamma)}(x;u). \label{eq:ReflectedRefracted-KilledPotential1}
				\end{equation}
				
				\item[\upshape{(iv)}] For $q, \gamma \geq 0$ and $x,b,u \in [0,a]$,	\begin{equation}
					\mathbb{E}_x\Bigl(\int_0^{\infty} \ee^{-q t} 1_{\left\{U_t \in \dd u, t < T_{b,U}^{+,\gamma} \wedge \tau_{a,U}^+ \right\}} \mathrm{d} t\Bigr) \big/ \dd u =   \frac{\overline{z}^{(q,\gamma)}_b(x)}{\overline{z}^{(q,\gamma)}_b(a)} \overline{w}_b^{(q,\gamma)}(a;u) -  \overline{w}_b^{(q,\gamma)}(x;u). \label{eq:ReflectedRefracted-KilledPotential2}
				\end{equation}
				
				\item[\upshape{(v)}] For  $q, \gamma \geq 0$, $x,b \in [0,a]$, and $u \geq 0$,
				\begin{equation}
					\mathbb{E}_x\Bigl(\int_0^{\infty} \ee^{-q t} 1_{\left\{U_t \in \dd u, t < T_{b,U}^{-,\gamma} \right\}} \mathrm{d} t\Bigr) \big/ \dd u = {\ovl{z}^{(q+\gamma,-\gamma)}_b(x)} \frac{\alpha_1^{(q,q+\gamma)}(b;u)}{\alpha_1^{(q,q+\gamma)}(b)} - \ovl{w}_b^{(q+\gamma,-\gamma)}(x;u), \; \notag 
				\end{equation}
				where $\alpha_1^{(\cdot,\cdot)}$ and $\alpha_2^{(\cdot,\cdot)}$ are defined in Eqs.~\eqref{eq:LimitOfWBar} and \eqref{eq:LimitOfZBar}, respectively. 
				\item[\upshape{(vi)}] For $q,\gamma \geq 0$, $x,b \in [0,a]$, and $u \geq 0$,
				\begin{equation}
					\mathbb{E}_x\Bigl(\int_0^{\infty} \ee^{-q t} 1_{\left\{U_t \in \dd u, t < T_{b,U}^{+,\gamma} \right\}} \mathrm{d} t\Bigr) \big/ \dd u =  {\ovl{z}^{(q,\gamma)}_b(x)}
					\frac{\gamma \int^\infty_0 \ee^{- \varphi_{q+\gamma} y}  w^{(q)}(y+b;u) \dd y}{\gamma \int^\infty_0 \ee^{- \varphi_{q+\gamma} y}  z^{(q)}(y+b) \dd y} - \ovl{w}_b^{(q,\gamma)}(x;u). \; \notag 
				\end{equation}
			\end{itemize}
		\end{proposition}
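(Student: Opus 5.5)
All six identities will be obtained as limits of Theorems \ref{Thm:Main-ReflectedRefractedDoubleKilledPoissonPotential} and \ref{Thm:SecondMain-ReflectedRefractedDoubleKilledPoissonPotential} (and of the variants in the two remarks following them), letting one of the observation rates tend to $0$ or to $\infty$. On the probabilistic side, the passage to the limit is handled as follows.

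\emph{Rate to $0$.} When $\gamma\downarrow 0$ (or $\lambda\downarrow 0$), the first Poissonian observation of the corresponding clock tends to $\infty$ almost surely. Hence the killing indicator increases to the one-sided one, and monotone convergence transfers the limit to the potential measure.

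\emph{Rate to $\infty$.} When $\lambda\uparrow\infty$, Lemma \ref{lem:ConvergenceInProbofPoisTimes}, applied to $U$, shows that $T^{+,\lambda}_{a,U}\to\tau^+_{a,U}$ in probability. Since $T^{+,\lambda}_{a,U}\ge \tau^+_{a,U}$ and the integrand is bounded by $\ee^{-qt}$, dominated convergence gives convergence of the potential measures.

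The argument for Lemma \ref{lem:ConvergenceInProbofPoisTimes} only uses right-continuity and independence of the clock, so it carries over from $X$ to $U$. The identification with $\tau^+$ also needs the fact that $U$ does not creep above $a$ without entering $(a,\infty)$; this holds by right-continuity.

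On the analytic side, the limits are computed part by part.

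\textbf{Part (i).} Let $\gamma\downarrow0$ in \eqref{eq:ReflectedRefracted-DoubleKilledPotential}. From \eqref{eq:SecondGenFunc-RefractedWTilde1} and \eqref{eq:SecondGenFunc-RefractedZTilde1} with second parameter $-\gamma\to0$, we get $\ovl w_b^{(q+\gamma,-\gamma)}(x;u)\to w^{(q)}(x;u)$ and $\ovl z_b^{(q+\gamma,-\gamma)}\to z^{(q)}$. This uses continuity of $W^{(p)}$ in $p$ and dominated convergence inside the Laplace integrals, which is justified by exponential order with $\varphi_{q+\lambda}$ large enough.

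\textbf{Part (ii).} Let $\gamma\downarrow0$ in \eqref{eq:ReflectedRefracted-DoubleKilledPotential3}. This directly replaces $\gamma$ by $0$.

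\textbf{Parts (iii) and (iv).} Let $\lambda\to\infty$ in Theorem \ref{Thm:Main-ReflectedRefractedDoubleKilledPoissonPotential}, and in its $T^{+,\gamma}_b$ variant from Remark \ref{Rem:AboutHowFormsAreTheSame}. Multiply numerator and denominator by $\varphi_{q+\lambda}/\lambda$ and use the initial value theorem exactly as in Remark \ref{rem:InitialValueTheorem}: $\varphi_{q+\lambda}\int_0^\infty \ee^{-\varphi_{q+\lambda}y}f(y+a)\dd y\to f(a)$. For $u\le a$ the term $\ee^{-\varphi_{q+\lambda}(u-a)}\1_{\{u>a\}}$ is absent, and in any case it is $o(\lambda/\varphi_{q+\lambda})$ only on $u>a$, which is irrelevant since $u\in[0,a]$.

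\textbf{Parts (v) and (vi).} Let $a\to\infty$ in (iii) and (iv), respectively.

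\emph{Part (vi).} Take (i)-type reasoning in the other order: let $\lambda\downarrow 0$ in the $T^{+,\gamma}_b$ variant of Theorem \ref{Thm:Main-ReflectedRefractedDoubleKilledPoissonPotential}. Equivalently, apply the strong Markov property at $T^{+,\gamma}_b$ combined with (i) with $a=b$. I would simply take $\lambda\downarrow0$ in the variant, so that the ratio becomes $\gamma\int\ee^{-\varphi_{q+\gamma}y}(\cdot)(y+b)$. Checking that this matches may instead require rerunning the renewal argument of Theorem \ref{Thm:Main-ReflectedRefractedDoubleKilledPoissonPotential} with $a$ removed: for $x\ge b$ the process is killed at rate $\gamma$, which is the identity in Lemma \ref{lem:PoissonPotentialsAndFluctuations}(i) with the roles of the parameters exchanged. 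I would follow that route if the limit is messy.

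\emph{Part (v).} Divide numerator and denominator of (iii) by $\W^{(q)}(a)\ee^{\varphi_q\cdot}$-type growth. Using \eqref{eq:SecondGenFunc-RefractedWTilde2}, \eqref{eq:SecondGenFunc-RefractedZTilde2} and \eqref{eq:LimitofRatioofScaleFuncs} for $Y$ (that is, $\W^{(q)}(a-y)/\W^{(q)}(a)\to \ee^{-\varphi_q y}$ and $\Z^{(q)}(a)/\W^{(q)}(a)\to q/\varphi_q$), one obtains
$\ovl w_b^{(q+\gamma,-\gamma)}(a;u)/\W^{(q)}(a)\to\alpha_1^{(q,q+\gamma)}(b;u)$ and $\ovl z_b^{(q+\gamma,-\gamma)}(a)/\W^{(q)}(a)\to\alpha_2^{(q,q+\gamma)}(b)$. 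The denominator in the statement should then be $\alpha_2$, and I would verify this.

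The main obstacle is the justification of exchanging limits inside the Laplace-type integrals and ratios. Two points need care: uniform exponential bounds when $\lambda\to\infty$, and the $a\to\infty$ limits where $\varphi_q=0$ (when $q=0$). For the latter I would treat the case $q=0$ separately, by continuity in $q$.
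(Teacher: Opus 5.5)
For (i)–(v) your plan is the paper's. You let $\gamma\downarrow0$ in Theorems~\ref{Thm:Main-ReflectedRefractedDoubleKilledPoissonPotential} and \ref{Thm:SecondMain-ReflectedRefractedDoubleKilledPoissonPotential}, let $\lambda\to\infty$ with the initial value theorem in Theorem~\ref{Thm:Main-ReflectedRefractedDoubleKilledPoissonPotential} and its $T^{+,\gamma}_{b,U}$ variant, and let $a\to\infty$ in (iii) after normalising by $\W^{(q)}(a)$. Your suspicion about (v) is also right. The paper's own limit \eqref{eq:LimitOfWandZBar} gives $\ovl z_b^{(q+\gamma,-\gamma)}(a)/\W^{(q)}(a)\to\alpha_2^{(q,q+\gamma)}(b)$, so the denominator in (v) should be $\alpha_2^{(q,q+\gamma)}(b)$, not $\alpha_1^{(q,q+\gamma)}(b)$.

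The gap is in (vi). Your headline plan ($a\to\infty$ in (iv)) is the paper's, but you never compute the limit. The route you then commit to, $\lambda\downarrow0$ in the variant, is not a substitution. In the variant, $\ovl w_b^{(q,\gamma)}(\cdot;u)$ and $\ovl z_b^{(q,\gamma)}$ grow like $\W^{(q+\gamma)}$, i.e. like $\ee^{\varphi_{q+\gamma}y}$. So as $\varphi_{q+\gamma+\lambda}\downarrow\varphi_{q+\gamma}$, both integrals $\int_0^\infty \ee^{-\varphi_{q+\gamma+\lambda}y}(\cdot)(y+a)\,\dd y$ blow up like $(\varphi_{q+\gamma+\lambda}-\varphi_{q+\gamma})^{-1}$. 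The ratio then tends to the ratio of the growth constants $\lim_{y\to\infty}\ovl w_b^{(q,\gamma)}(y;u)/\W^{(q+\gamma)}(y)$ and $\lim_{y\to\infty}\ovl z_b^{(q,\gamma)}(y)/\W^{(q+\gamma)}(y)$. These are exactly the quantities the $a\to\infty$ route needs. Your third option is a renewal at $\tau^-_{b,Y}$ combined with (i) at $a=b$, $\lambda=\gamma$; this correctly gives (vi) for $x\le b$, but for $x>b$ it needs the same constants through \eqref{lem-eq:InitialLimitQuantity}. So none of your alternatives avoids this computation. Note also that the $\W^{(q)}(a)$ normalisation from (v) cannot be reused here: $\ovl w_b^{(q,\gamma)}(a;u)/\W^{(q)}(a)\to\infty$, and the correct normaliser is $\W^{(q+\gamma)}(a)$.

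The missing step can be filled in two ways.

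\emph{The paper's way.} Since $\varphi_{q+\gamma}>\varphi_q\ge\Phi_q$, one has $w^{(q)}(a;u)/\W^{(q+\gamma)}(a)\to0$ and $z^{(q)}(a)/\W^{(q+\gamma)}(a)\to0$. Then from \eqref{eq:SecondGenFunc-RefractedWTilde1}, \eqref{eq:SecondGenFunc-RefractedZTilde1} and dominated convergence,
\[
\frac{\ovl w_b^{(q,\gamma)}(a;u)}{\W^{(q+\gamma)}(a)}\to\gamma\int_b^\infty \ee^{-\varphi_{q+\gamma}y}w^{(q)}(y;u)\,\dd y,\qquad \frac{\ovl z_b^{(q,\gamma)}(a)}{\W^{(q+\gamma)}(a)}\to\gamma\int_b^\infty \ee^{-\varphi_{q+\gamma}y}z^{(q)}(y)\,\dd y .
\]
This gives (vi) after cancelling $\ee^{-\varphi_{q+\gamma}b}$.

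\emph{A route closer to your (v) argument.} Use the finite-range forms \eqref{eq:SecondGenFunc-RefractedWTilde2} and \eqref{eq:SecondGenFunc-RefractedZTilde2}. These give the limits $\alpha_1^{(q+\gamma,q)}(b;u)$ and $\alpha_2^{(q+\gamma,q)}(b)$ directly, with no negligibility lemma. You then check, by Laplace transforms evaluated at $\varphi_{q+\gamma}$ using $\psi^*_0(\varphi_{q+\gamma})=q+\gamma$, that these two constants equal the two integrals above.
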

		\begin{proof}
			In all parts of the proof below dominated convergence theorem is used implicitly.
			
			\noindent  	(i)	Observe that  $\gamma \downarrow 0$ implies $T_{b,U}^{-,\gamma} \uparrow \infty$,  and also that (from Eqs.~\eqref{eq:SecondGenFunc-RefractedWTilde1} and \eqref{eq:SecondGenFunc-RefractedZTilde1})  
			\begin{equation}
				\lim\limits_{\gamma \downarrow 0} \overline{w}_b^{(q+\gamma,-\gamma)}(x;u) = w^{(q)}(x;u) \quad \text{ and } \quad  \lim\limits_{\gamma \downarrow 0} \overline{z}_b^{(q+\gamma,-\gamma)}(x) = z^{(q)}(x). \notag
			\end{equation}
			Then, taking $\gamma \downarrow 0$ in Eq.~\eqref{eq:ReflectedRefracted-DoubleKilledPotential}, the result follows. 
			\\[3pt]
			\noindent \upshape{(ii)} Using a similar reasoning as for the proof of (i), we take $\gamma \downarrow 0$ in Eq.~\eqref{eq:ReflectedRefracted-DoubleKilledPotential3} and the result follows. 
			\\[3pt]
			\noindent \upshape{(iii)} First observe that $\varphi_{q+\lambda}\ee^{-\varphi_{q+\lambda}(u-a)}\1_{\{u \in (a,\infty)\}} \rightarrow 0$ as $\lambda \rightarrow \infty$. Then, dividing the existing $\lambda$ terms, 
			multiplying and dividing by $\varphi_{q+\lambda}$ in both the numerator and denominator in the fraction terms of Eqs.~\eqref{eq:ReflectedRefracted-DoubleKilledPotential} and then using the initial value theorem of Laplace transforms (see Remark \ref{rem:InitialValueTheorem}) yields the result. 
			\\[3pt]
			\noindent \upshape{(iv)} By using the identity that is obtained by changing the parameters $q$ and $\gamma$ appropriately (see Remark \ref{Rem:AboutHowFormsAreTheSame} for further details), a similar method as in \upshape{(iii)} can be used to derive the result. 
			\\[3pt]
			\noindent \upshape{(v)} Taking $a\rightarrow \infty$ in part  \upshape{(iii)} and using Eq.~\eqref{eq:LimitOfWandZBar} we obtain the result. 
			\\[3pt]
			\noindent \upshape{(vi)} 
			By a slight adaptation of the result and proof of Lemma 9 of \cite{BBR2009}, it can easily be shown, since $\varphi_{q+\gamma} > \varphi_{q} \geq \Phi_q$, that 
			\[
			\lim\limits_{a \rightarrow \infty} \frac{W^{(q)}(a - u)}{\W^{(q+\gamma)}(a)} = \lim\limits_{a \rightarrow \infty} \frac{\W^{(q)}(a - u)}{\W^{(q+\gamma)}(a)} = 0, \quad \quad u \geq 0,  
			\]
			and thus, from Eqs.~\eqref{eq:RefractedWScaleFunc} and \eqref{eq:RefractedZScaleFunc}, that 
			\begin{equation}
				\lim\limits_{a \rightarrow \infty } \frac{w^{(q)}(a;u)}{\W^{(q+\gamma)}(a)}= \lim\limits_{a \rightarrow \infty } \frac{z^{(q)}(a)}{\W^{(q+\gamma)}(a)} = 0, \quad \quad u \geq 0.   \label{eq:TempLimitforProp(v)}
			\end{equation}
			\color{black}
			Then, by using  Eqs.~\eqref{eq:SecondGenFunc-RefractedWTilde1} and \eqref{eq:SecondGenFunc-RefractedZTilde1} along with Eqs.~\eqref{eq:LimitofRatioofScaleFuncs} and \eqref{eq:TempLimitforProp(v)}, we get that
			\begin{align}
				\lim\limits_{a \rightarrow \infty} \frac{\ovl{w}^{(q,\gamma)}_b(a;u)}{\W^{(q+\gamma)}(a)} 
				&= \ee^{- \varphi_{q+\gamma} b} \times \gamma \int^\infty_0 \ee^{- \varphi_{q+\gamma} y}  w^{(q)}(y+b;u) \dd y,  \notag
			\end{align}
			and 
			\begin{align}
				\lim\limits_{a \rightarrow \infty} \frac{\ovl{z}^{(q,\gamma)}_b(a)}{\W^{(q+\gamma)}(a)} 
				&= \ee^{- \varphi_{q+\gamma} b} \times \gamma \int^\infty_0 \ee^{- \varphi_{q+\gamma} y}  z^{(q)}(y+b) \dd y,  \notag
			\end{align}
			respectively. We complete the proof by substituting the above limits in part   \upshape{(iv)} as $a\rightarrow \infty$. 
		\end{proof}
		
		\begin{remark}\label{cor:ReflectedRefractedPotentialAbove}\upshape{
				
				For $x,u \in [0,a]$, observe that
				\begin{equation}
					\varphi_{q+\lambda} \int^\infty_0 \ee^{-\varphi_{q+\lambda}y} w^{(q)}(y+x;u) \dd y \rightarrow w^{(q)}(x;u), \quad \text{ and } \quad \varphi_{q+\lambda} \int^\infty_0 \ee^{-\varphi_{q+\lambda}y} z^{(q)}(y+x) \dd y \rightarrow z^{(q)}(x), \notag
				\end{equation}
				as $\lambda \rightarrow \infty$, a consequence of the initial value Theorem of Laplace transforms (see Remark \ref{rem:InitialValueTheorem}), and also that $\varphi_{q+\lambda}\ee^{-\varphi_{q+\lambda}(u-a)}\1_{\{u \in (a,\infty)\}} \rightarrow 0$ as $\lambda \rightarrow \infty$. Then, by multiplying and dividing by $\varphi_{q+\lambda}$ and taking $\lambda \rightarrow \infty $ in Proposition \ref{Thm:MainPoissonPotentialAbove} \textnormal{(i)}, we obtain Theorem 4.1 in \cite{PY2018} which can also be obtained by letting $\gamma \rightarrow 0$ and using dominated convergence in Eqs.~\eqref{eq:ReflectedRefracted-KilledPotential1} and \eqref{eq:ReflectedRefracted-KilledPotential2}.
			}
		\end{remark}
		
		\section{Occupation Times}
		\label{sec:occup}
		
		In this section, we are interested in computing the occupation times of the process $U$ above and below the level of refraction $b$ but w.r.t.~the minimum of the Poissonian exit times $T_{a,U}^{+,\lambda} \wedge T_{b,U}^{-,\gamma}$ and $T_{a,U}^{-,\lambda} \wedge T_{b,U}^{+,\gamma}$. Namely, for $a>b>0$, we compute the joint Laplace transforms of 
		\[
		\Bigl(T_{a,U}^{+,\lambda} \wedge T_{b,U}^{-,\gamma},\int_0^{T_{a,U}^{+,\lambda} \wedge T_{b,U}^{-,\gamma}} 1_{\left\{U_s<b\right\}} \mathrm{d} s\Bigr) \quad \text { and } \quad \Bigl(T_{a,U}^{-,\lambda} \wedge T_{b,U}^{+,\gamma},\int_0^{T_{a,U}^{-,\lambda} \wedge T_{b,U}^{+,\gamma}} 1_{\left\{U_s < b\right\}} \mathrm{d} s\Bigr),
		\]
		conditionally on the event $T_{a,U}^{+,\lambda} \wedge T_{b,U}^{-,\gamma}<\infty$.  We begin by first considering the cases for which either $T_{a,U}^{+,\lambda} < T_{b,U}^{-,\gamma}$ or $ T_{b,U}^{-,\gamma} < T_{a,U}^{+,\lambda}$ and use them to derive the required joint Laplace transforms, as  given above. It is also worthwhile noting that the joint Laplace transforms corresponding to the minimums of $T_{a,U}^{+,\lambda} \wedge T_{b,U}^{+,\gamma}$ and $T_{a,U}^{-,\lambda} \wedge T_{b,U}^{-,\gamma}$ can be derived by using similar methods as in the upcoming, and we thus exclude these results. 
		
		For the proceeding results, we shall denote, for $p \geq 0$, $\lambda_p = \lambda + p$ and $\gamma_p = \gamma + p$. Furthermore, in both Propositions \ref{Prop:OccupationTimes1} and \ref{Prop:OccupationTimes2} below,  we shall use strong approximation in the same way as in Theorem \ref{Thm:Main-ReflectedRefractedDoubleKilledPoissonPotential} (see also the proof of Theorem 1 in \cite{LRZ2014}). It is hence sufficient to prove Propositions \ref{Prop:OccupationTimes1} and \ref{Prop:OccupationTimes2} for the cases of bounded variation only. 
		\begin{proposition}\label{Prop:OccupationTimes1}
			Let $p,q \geq 0$ and $\lambda,\gamma> 0$. Then, for $x,b \in [0,a]$, we have that 
			\begin{align}
				\E_x \Bigl( \ee^{-q T_{a,U}^{+,\lambda} - p \int_0^{T_{a,U}^{+,\lambda}} 1_{\left\{U_s<b\right\}} \mathrm{d} s} \; \1_{\{T_{a,U}^{+,\lambda} < T_{b,U}^{-,\gamma}\}}\Bigr) &=  \frac{\overline{z}_b^{(q+\gamma_p, -\gamma_p)}(x)}{\varphi_{q+\lambda} \int^\infty_0 \ee^{-\varphi_{q+\lambda}y}\overline{z}_b^{(q+\gamma_p, -\gamma_p)}(y+a) \dd y}, \label{Eq:OccupationProp1-Identity1} 
			\end{align} 
			where $\overline{z}_b^{(\cdot, \cdot)}$ is defined in Eq.~\eqref{eq:SecondGenFunc-RefractedZTilde1}. 
		\end{proposition}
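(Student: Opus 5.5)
We would mimic the renewal argument of Theorem \ref{Thm:Main-ReflectedRefractedDoubleKilledPoissonPotential}, now with the functional $A(x):=\E_x\bigl( \ee^{-q T_{a,U}^{+,\lambda} - p \int_0^{T_{a,U}^{+,\lambda}} 1_{\{U_s<b\}} \dd s} \1_{\{T_{a,U}^{+,\lambda} < T_{b,U}^{-,\gamma}\}}\bigr)$ in place of $R^{(q)}_{\lambda,\gamma}$. By the strong approximation argument already described, it suffices to treat the bounded variation case, where $\W^{(q)}(0)>0$.

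\textbf{Case $x\in[0,b)$.} Up to $\tau_{b,U}^+$ the process $U$ agrees in law with $\wt X$ and stays below $a$, so no $\lambda$-observation can stop it. Every $\gamma$-observation before $\tau_{b,U}^+$ occurs while $U<b$, so it kills the functional; this is equivalent to an independent $e_\gamma$ clock. The occupation term contributes the discount $\ee^{-p t}$ on this stretch. Hence the total discount rate is $q+p+\gamma=q+\gamma_p$, and the strong Markov property at $\tau_{b,U}^+$ together with Eq.~\eqref{eq:ReflectedBelowExitfromAbove} gives
\[
A(x)=\frac{Z^{(q+\gamma_p)}(x)}{Z^{(q+\gamma_p)}(b)}A(b).
\]
The key observation is that the parameter $\gamma_p$ absorbs both the $\gamma$-killing and the occupation penalty below $b$.

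\textbf{Case $x\in[b,a]$.} Up to $\tau_{b,Y}^-$ the process $U$ agrees with $Y$ and accrues no occupation time, and no $\gamma$-kill can occur while it stays above $b$. Splitting at $T_{a,Y}^{+,\lambda}\wedge\tau_{b,Y}^-$ gives
\[
A(x)=\E_x\bigl(\ee^{-qT_{a,Y}^{+,\lambda}}\1_{\{T_{a,Y}^{+,\lambda}<\tau_{b,Y}^-\}}\bigr)+\frac{A(b)}{Z^{(q+\gamma_p)}(b)}\E_x\bigl(\ee^{-q\tau_{b,Y}^-}\1_{\{\tau_{b,Y}^-<T_{a,Y}^{+,\lambda}\}}Z^{(q+\gamma_p)}(Y_{\tau_{b,Y}^-})\bigr).
\]
The first term is obtained by integrating Eq.~\eqref{eq:Landriault-Identity2} over $y>a$, which yields $\W^{(q)}(x-b)\lambda/(\varphi_{q+\lambda}\Z^{(q)}(a-b,\varphi_{q+\lambda}))$. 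The second expectation comes from Lemma \ref{lem:SimilartoRonnieLemma}(i) with $j=2$, applied with $p$ replaced by $q+\gamma_p$. This gives
\[
\overline z_b^{(q+\gamma_p,-\gamma_p)}(x)-\frac{\W^{(q)}(x-b)}{\Z^{(q)}(a-b,\varphi_{q+\lambda})}\,\lambda I,\qquad I:=\int_0^\infty \ee^{-\varphi_{q+\lambda}y}\,\overline z_b^{(q+\gamma_p,-\gamma_p)}(y+a)\,\dd y.
\]

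\textbf{Solving for $A(b)$.} Set $x=b$. From Eqs.~\eqref{eq:SecondGenFunc-RefractedZTilde1} and \eqref{eq:RefractedZScaleFunc} we have $\overline z_b^{(q+\gamma_p,-\gamma_p)}(b)=Z^{(q+\gamma_p)}(b)$, so the renewal equation at $x=b$ reads
\[
A(b)=\frac{\W^{(q)}(0)}{\Z^{(q)}(a-b,\varphi_{q+\lambda})}\Bigl(\frac{\lambda}{\varphi_{q+\lambda}}-\frac{A(b)\,\lambda I}{Z^{(q+\gamma_p)}(b)}\Bigr)+A(b).
\]
Cancelling $A(b)$ and dividing by $\W^{(q)}(0)\neq 0$ gives
\[
A(b)=\frac{Z^{(q+\gamma_p)}(b)}{\varphi_{q+\lambda}I}.
\]

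\textbf{Back-substitution and the main obstacle.} Substituting $A(b)$ back, in both cases the terms carrying $\W^{(q)}(x-b)$ cancel exactly, and we obtain $A(x)=\overline z_b^{(q+\gamma_p,-\gamma_p)}(x)/(\varphi_{q+\lambda}I)$. This is the claimed formula; for $x<b$ it uses $\overline z_b(x)=Z(x)$. The main obstacle is justifying the reduction to the $e_\gamma$ clock and the combined rate $q+\gamma_p$ in the first case, since $\gamma$-observations only kill while $U<b$, and confirming that no hidden occupation contribution arises above $b$.
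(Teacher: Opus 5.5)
Your proposal is correct and matches the paper's proof step for step. You use the same strong Markov decomposition at $\tau_{b,U}^+$ for $x\in[0,b)$, merging the $\gamma$-clock and the occupation penalty into the rate $q+\gamma_p$ via Eq.~\eqref{eq:ReflectedBelowExitfromAbove}, and at $\tau_{b,Y}^-$ for $x\in[b,a]$, evaluated with Eq.~\eqref{eq:Landriault-Identity2} and Lemma~\ref{lem:SimilartoRonnieLemma}(i); you then solve at $x=b$ using $\overline{z}_b^{(q+\gamma_p,-\gamma_p)}(b)=Z^{(q+\gamma_p)}(b)$ and $\W^{(q)}(0)\neq 0$, and back-substitute. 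The step you flag as the main obstacle is not a gap: the paper handles it by the memoryless property of the independent Poisson clock, exactly as you argue.
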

		
		\begin{remark}\upshape{
			A straightforward argument can now be used to get   $\E_x \bigl( \ee^{-q T_{a,U}^{+,\lambda} - p \int_0^{T_{a,U}^{+,\lambda}} 1_{\left\{U_s>b\right\}} \mathrm{d} s} $ $ \1_{\{T_{a,U}^{+,\lambda} < T_{b,U}^{-,\gamma} \}}\bigr)$ from Eq.~\eqref{Eq:OccupationProp1-Identity1}. Indeed, from Lemma 4.1 of \cite{PY2018}, we have that $\int_0^{T_{a,U}^{+,\lambda}} 1_{\left\{U_s=b\right\}} \mathrm{d} s = 0$ a.s.~and therefore that 
			\begin{align}
				\E_x \Bigl( \ee^{-q T_{a,U}^{+,\lambda} - p \int_0^{T_{a,U}^{+,\lambda}} 1_{\left\{U_s>b\right\}} \mathrm{d} s} \; \1_{\{T_{a,U}^{+,\lambda} < T_{b,U}^{-,\gamma}\}}\Bigr) &= \E_x \Bigl( \ee^{-(q+p) T_{a,U}^{+,\lambda} + p \int_0^{T_{a,U}^{+,\lambda}} 1_{\left\{U_s < b\right\}} \mathrm{d} s} \; \1_{\{T_{a,U}^{+,\lambda} < T_{b,U}^{-,\gamma}\}}\Bigr), \notag
			\end{align}
			showing that the above  l.h.s.~expectation  is obtained from Eq.~\eqref{Eq:OccupationProp1-Identity1} by replacing all instances of $q$ and $p$ therein by $q+p$ and $-p$, respectively\color{black}.  To be more precise, we have that instances of $\varphi_{q+\lambda}$ and  $\overline{z}_b^{(q+\gamma_p,-\gamma_p)}$ are to be replaced by $\varphi_{q+\lambda_p}$ and  $\overline{z}_b^{(q+\gamma,p-\gamma)}$, respectively, in the r.h.s.~of Eq.~\eqref{Eq:OccupationProp1-Identity1} to obtain the desired occupation time identity. This reasoning hence implies that the occupation times above the level $b$ (corresponding to indicators of the form $\1_{\{U_s > b\}}$)
			can essentially be obtained from the identities which shall be derived by a parameter change. We hence omit these quantities for brevity.}
		\end{remark}
		\begin{proof}
			Let $h^{(q,p)}_{\lambda,\gamma}(x) :=\; \E_x \bigl( \ee^{-q T_{a,U}^{+,\lambda} - p \int_0^{T_{a,U}^{+,\lambda}} 1_{\left\{U_s<b\right\}} \mathrm{d} s} \; \1_{\{T_{a,U}^{+,\lambda} < T_{b,U}^{-,\gamma}\}}\bigr)$, and recall both that $T_{b,U}^{-,\gamma} \wedge \tau_{b,\wt{X}}^+  \overset{\dd}{=} e_{\gamma} \wedge \tau_{b,\wt{X}}^+$, for $e_\gamma \sim \text{Exp}(\gamma)$ that is independent of all other random variables, and that $ \{U_t, t < \tau_{b,U}^+\} \overset{\dd}{=} \{\wt{X}_t, t < 
			\tau_{b,\wt{X}}^+\}$ w.r.t.~$\PP_x$ for $x \in [0,b)$. Hence, for $x \in [0,b)$, by conditioning on $\tau_{b,U}^+$ and the strong Markov property,  
			\begin{align}
				h^{(q,p)}_{\lambda,\gamma}(x)
				=& \; \E_x \Bigl( \ee^{-(q+p) {\tau}_{b,\wt{X}}^{+} } \; \1_{\{\tau_{b,\wt{X}}^+ < e_{\gamma} \}}\Bigr) 	h^{(q,p)}_{\lambda,\gamma}(b) 
				= \frac{Z^{(q+\gamma_p)}(x)}{Z^{(q+\gamma_p)}(b)}h^{(q,p)}_{\lambda,\gamma}(b) , \label{eq:OccupationProp-SMPx<bIdentity1}
			\end{align}
			where the last equality follows by using Eq.~\eqref{eq:ReflectedBelowExitfromAbove}.
			
			Now, considering $x \in [b,a]$ and noticing that $\{U_t, t < \tau_{b,U}^-\} \overset{\dd}{=} \{Y_t, t < \tau_{b,Y}^-\}$ w.r.t.~$\PP_x$ for these $x$-values, we condition on $\tau_{b,U}^-$ and use the strong Markov property to get
			\begin{align}
				h^{(q,p)}_{\lambda,\gamma}(x)
				=& \; \E_x \Bigl( \ee^{-q T_{a,U}^{+,\lambda} } \1_{\{ T_{a,U}^{+,\lambda} < \tau_{b,U}^- \}}\Bigr) + \E_x \Bigl( \ee^{-q \tau_{b,U}^-}  \1_{\{\tau_{b,U}^- < T_{a,U}^{+,\lambda} \}} h^{(q,p)}_{\lambda,\gamma}(U_{\tau_{b,U}^-}) \Bigr) \notag \\
				=&\; \E_x \Bigl( \ee^{-q T_{a,Y}^{+,\lambda} } \1_{\{ T_{a,Y}^{+,\lambda} < \tau_{b,Y}^- \}}\Bigr) +  \frac{h^{(q,p)}_{\lambda,\gamma}(b)}{Z^{(q+\gamma_p)}(b)}\E_x \Bigl( \ee^{-q \tau_{b,Y}^-}  \1_{\{\tau_{b,Y}^- < T_{a,Y}^{+,\lambda} \}} Z^{(q+\gamma_p)}(Y_{\tau_{b,Y}^-}) \Bigr),  
				\label{eq:OccupationProp-NewIntermediate}
			\end{align}
			where the last equality follows by a substitution of Eq.~\eqref{eq:OccupationProp-SMPx<bIdentity1}.
			
			To determine the first expectation term in the above equation, we use Lemma \ref{lem:PoissonPotentialsAndFluctuations} (ii) to deduce, by a conditioning argument and Fubini's theorem, that
			\begin{align}
				\E_x\bigl( \ee^{-q T_{a,Y}^{+,\lambda}} \1_{\{T_{a,Y}^{+,\lambda} < \tau_{b,Y}^-\}} \bigr) &= \int^\infty_a
				\E_x\bigl( \ee^{-q T_{a,Y}^{+,\lambda}} \1_{\{Y_{T_{a,Y}^{+,\lambda}} \in \dd y, \; T_{a,Y}^{+,\lambda} < \tau_{b,Y}^-\}} \bigr) 
				= \frac{\lambda}{\varphi_{q+\lambda}} \frac{\W^{(q)}(x-b)}{\Z^{(q)}(a-b,\varphi_{q+\lambda})}. \notag
			\end{align}
			Then, by using the above equation and Lemma \ref{lem:SimilartoRonnieLemma}, Eq.~\eqref{eq:OccupationProp-NewIntermediate} becomes
			\begin{align}
				h^{(q,p)}_{\lambda,\gamma}(x)	=&\; \frac{\lambda}{\varphi_{q+\lambda}} \frac{\W^{(q)}(x-b)}{\Z^{(q)}(a-b,\varphi_{q+\lambda})} \notag\\
				&+  \frac{h^{(q,p)}_{\lambda,\gamma}(b)}{Z^{(q+\gamma_p)}(b)} \Bigl( \overline{z}_b^{(q+\gamma_p,-\gamma_p)}(x) - \frac{\W^{(q)}(x-b)}{\Z^{(q)}(a-b,\varphi_{q+\lambda})}\lambda \int^\infty_0 \ee^{-\varphi_{q+\lambda}y}\overline{z}_b^{(q+\gamma_p,-\gamma_p)}(y+a)\dd y \Bigr). \label{eq:OccupationProp-FinalSMP}
			\end{align}
			Now,  let $x=b$ in the above and solve w.r.t.~$h^{(q,p)}_{\lambda,\gamma}(b)$ to find that
			\begin{equation}
				h^{(q,p)}_{\lambda,\gamma}(b) =  \frac{Z^{(q+\gamma_p)}(b)}{\varphi_{q+\lambda} \int^\infty_0 \ee^{-\varphi_{q+\lambda}y}\overline{z}_b^{(q+\gamma_p, -\gamma_p)}(y+a) \dd y}. \notag
			\end{equation}
			Lastly, by noticing that $\overline{z}_b^{(q+\gamma_p,-\gamma_p)}(x) = Z^{(q+\gamma_p)}(x)$ for $x \in [0,b)$, we substitute the above quantity into  Eqs.~\eqref{eq:OccupationProp-SMPx<bIdentity1} and \eqref{eq:OccupationProp-FinalSMP} to obtain the form of Eq.~\eqref{Eq:OccupationProp1-Identity1} for $x \in [0,a]$ which concludes the proof.
		\end{proof}
		
		\begin{remark}\label{Cor:OccupationTimes1}\upshape{
			\upshape{By letting $\gamma \rightarrow 0$, noticing that $\gamma_p \rightarrow p$ and $T_{b,U}^{-,\gamma} \rightarrow \infty$} and using dominated convergence, Eq.~\eqref{Eq:OccupationProp1-Identity1} reduces to
			\begin{align}
				\E_x \Bigl( \ee^{-q T_{a,U}^{+,\lambda} - p \int_0^{T_{a,U}^{+,\lambda}} 1_{\left\{U_s<b\right\}} \mathrm{d} s} \; \1_{\{T_{a,U}^{+,\lambda} < \infty\}}\Bigr) &=  \frac{\overline{z}_b^{(q+p, -p)}(x)}{\varphi_{q+\lambda} \int^\infty_0 \ee^{-\varphi_{q+\lambda}y}\overline{z}_b^{(q+p, -p)}(y+a) \dd y}. \notag 
			\end{align} }
		\end{remark}
		
		\begin{proposition}\label{Prop:OccupationTimes2}
			Let $p,q \geq 0$ and $\lambda, \gamma> 0$. Then, for $x,b \in [0,a]$, we have that
			\begin{align}
				\E_x \Bigl( \ee^{-q T_{b,U}^{-,\gamma} - p \int_0^{T_{b,U}^{-,\gamma}} 1_{\left\{U_s<b\right\}} \mathrm{d} s} \; \1_{\{T_{b,U}^{-,\gamma} < T_{a,U}^{+,\lambda}\}}\Bigr) &=   \frac{\int^\infty_0 \ee^{-\varphi_{q+\lambda}y}\, \Bigl( \gamma \int^b_0 \overline{w}_b^{(q+\gamma_p,-\gamma_p)}(y+a;u) \dd u\Bigr) \dd y}{\int^\infty_0 \ee^{-\varphi_{q+\lambda}y} \,  \overline{z}_b^{(q+\gamma_p,-\gamma_p)}(y+a) \dd y} \overline{z}_b^{(q+\gamma_p,-\gamma_p)}(x) \notag \\
				&\quad - \gamma \int^b_0 \overline{w}_b^{(q+\gamma_p,-\gamma_p)}(x;u) \dd u \label{Eq:OccupationProp2-Identity1}, 
			\end{align} 
			where $\overline{w}_b^{(\cdot, \cdot)}$ and $\overline{z}_b^{(\cdot, \cdot)}$ are defined in Eqs.~\eqref{eq:SecondGenFunc-RefractedWTilde1} and \eqref{eq:SecondGenFunc-RefractedZTilde1}, respectively. 
		\end{proposition}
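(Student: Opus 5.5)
We will follow the scheme of the proof of Proposition \ref{Prop:OccupationTimes1}, restricting to bounded variation (strong approximation covers the unbounded variation case). Set
\[
k^{(q,p)}_{\lambda,\gamma}(x) := \E_x \bigl( \ee^{-q T_{b,U}^{-,\gamma} - p \int_0^{T_{b,U}^{-,\gamma}} 1_{\{U_s<b\}} \dd s} \1_{\{T_{b,U}^{-,\gamma} < T_{a,U}^{+,\lambda}\}}\bigr).
\]
The key observation is that the event $\{T_{b,U}^{-,\gamma}<T_{a,U}^{+,\lambda}\}$ can only occur during an excursion below $b$, at the first $\gamma$-epoch there. Hence the functional can be written as a potential:
\[
k^{(q,p)}_{\lambda,\gamma}(x)=\gamma\,\E_x\Bigl(\int_0^\infty \ee^{-qt-p\int_0^t 1_{\{U_s<b\}}\dd s}\1_{\{U_t<b,\; t<T_{b,U}^{-,\gamma}\wedge T_{a,U}^{+,\lambda}\}}\dd t\Bigr).
\]
This uses the compensator of $N^\gamma$ and the fact that a $\gamma$-arrival while $U<b$ kills the process by $T^{-,\gamma}_b$ first.

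\emph{Approach 1: reuse Theorem \ref{Thm:Main-ReflectedRefractedDoubleKilledPoissonPotential}.} With $p=0$, the display above is exactly $\gamma\int_0^b R^{(q)}_{\lambda,\gamma}(x,\dd u)$. Integrating Eq.~\eqref{eq:ReflectedRefracted-DoubleKilledPotential} over $u\in[0,b)$ then yields Eq.~\eqref{Eq:OccupationProp2-Identity1} with $\gamma_p=\gamma$. The $\ee^{-\varphi_{q+\lambda}(u-a)}$ term drops out, and the remaining factor $\lambda$ cancels between numerator and denominator.

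For general $p$, the extra discount $p$ acts only while $U<b$. In the proof of Theorem \ref{Thm:Main-ReflectedRefractedDoubleKilledPoissonPotential} this amounts to replacing $q+\gamma$ by $q+\gamma_p$ in the step for $x\in[0,b)$, i.e.\ in Eq.~\eqref{eq:Thm-SMPx<bIdentity1}. The step for $x\in[b,a]$ is unchanged, since on $\{t<\tau^-_{b,Y}\}$ no $p$-discount accrues.

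\emph{Approach 2: direct renewal argument, which I will actually write out.}

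For $x\in[0,b)$, I use that $U$ behaves like $\wt X$ before $\tau^+_{b}$ and that $T^{-,\gamma}_b$ acts as an $e_\gamma$-killing there. Conditioning on $\tau^+_{b,U}$ gives
\[
k(x)=\gamma\int_0^b\Bigl(\tfrac{Z^{(q+\gamma_p)}(x)}{Z^{(q+\gamma_p)}(b)}W^{(q+\gamma_p)}(b-u)-W^{(q+\gamma_p)}(x-u)\Bigr)\dd u+\tfrac{Z^{(q+\gamma_p)}(x)}{Z^{(q+\gamma_p)}(b)}k(b).
\]
This follows from Eq.~\eqref{eq:ReflectedBelowExitfromAbove} with rate $q+p+\gamma$.

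For $x\in[b,a]$, I condition on $\tau^-_{b,Y}$. There is no contribution from $T^{+,\lambda}_a$ occurring first. Substituting the formula for $x<b$ produces expectations of $\ee^{-q\tau^-_{b,Y}}\1_{\{\tau^-_{b,Y}<T^{+,\lambda}_{a,Y}\}}f(Y_{\tau^-_{b,Y}})$ with $f=Z^{(q+\gamma_p)}$ and $f=W^{(q+\gamma_p)}(\cdot-u)$. I evaluate these by Lemma \ref{lem:SimilartoRonnieLemma}(i) with $p$ replaced by $q+\gamma_p$; this produces $\ovl z_b^{(q+\gamma_p,-\gamma_p)}$ and $\ovl w_b^{(q+\gamma_p,-\gamma_p)}(\cdot;u)$, together with the Laplace-transform correction terms at $a$. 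Fubini allows the $u$-integral to be taken inside.

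Finally I set $x=b$. Here I use $\ovl z_b^{(q+\gamma_p,-\gamma_p)}(b)=Z^{(q+\gamma_p)}(b)$ and $\ovl w_b^{(q+\gamma_p,-\gamma_p)}(b;u)=W^{(q+\gamma_p)}(b-u)$, and $\W^{(q)}(0)\neq0$ in the bounded variation case, to solve for $k(b)$. Substituting back gives the result. For $x<b$, I use that $\ovl w$ and $\ovl z$ reduce to $W^{(q+\gamma_p)}$ and $Z^{(q+\gamma_p)}$, so the formula holds on all of $[0,a]$.

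The main obstacle is the algebra at $x=b$, where the terms with $\W^{(q)}(x-b)/\Z^{(q)}(a-b,\varphi_{q+\lambda})$ must be collected. The unknown $k(b)$ appears multiplied by $Z^{(q+\gamma_p)}(b)^{-1}$ times the bracket from Lemma \ref{lem:SimilartoRonnieLemma}(i), and the constant terms $Z(b)^{-1}\gamma\int W(b-u)\,\ovl z_b(x)$ and $\gamma\int\ovl w_b(x;u)$ must combine correctly. I will mirror the computation leading to Eq.~\eqref{Thm-eq:AlmostLastRenewalEq2}: the $\ovl z_b(x)$ coefficients cancel at $x=b$, leaving a linear equation whose solution gives the stated ratio.
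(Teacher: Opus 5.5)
Your Approach 2 is essentially the paper's proof: the same renewal equation for $x\in[0,b)$ from Eq.~\eqref{eq:ReflectedBelowExitfromAbove} at rate $q+\gamma_p$, then conditioning on $\tau^-_{b,Y}$ for $x\in[b,a]$ and evaluating the resulting expectations by Lemma~\ref{lem:SimilartoRonnieLemma}(i) with $p\mapsto q+\gamma_p$, and finally solving for the value at $x=b$ using $\W^{(q)}(0)\neq 0$. Your Approach 1, which integrates Theorem~\ref{Thm:Main-ReflectedRefractedDoubleKilledPoissonPotential} over $u\in[0,b)$ via the compensation formula, is a valid shortcut when $p=0$, but for $p>0$ it is only sketched, so Approach 2 is the argument that actually carries the proof.
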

		
		\begin{proof}
			Let $g^{(q,p)}_{\lambda,\gamma}(x) :=\; \E_x \bigl( \ee^{-q T_{b,U}^{-,\gamma} - p \int_0^{T_{b,U}^{-,\gamma}} 1_{\left\{U_s<b\right\}} \mathrm{d} s} \; \1_{\{T_{b,U}^{-,\gamma} < T_{a,U}^{+,\lambda}\}}\bigr)$, and recall both that $T_{b,U}^{-,\gamma} \wedge \tau_{b,\wt{X}}^+  \overset{\dd}{=} e_{\gamma} \wedge \tau_{b,\wt{X}}^+$, for $e_\gamma \sim \text{Exp}(\gamma)$ that is independent of all other random variables, and that $ \{U_t, t < \tau_{b,U}^+\} \overset{\dd}{=} \{\wt{X}_t, t < 
			\tau_{b,\wt{X}}^+\}$ w.r.t.~$\PP_x$ for $x \in [0,b)$. Then, for $x \in [0,b)$, by conditioning on $\tau_{b,U}^+$, using the  strong Markov property along with Eq.~\eqref{eq:ReflectedBelowExitfromAbove}, we have that 
			\begin{align}
				g^{(q,p)}_{\lambda,\gamma}(x)
				=& \; \E_x \Bigl( \ee^{-(q+p) e_{\gamma}} \; \1_{\{e_{\gamma} < \tau_{b,\wt{X}}^+ \}}\Bigr) + \E_x \Bigl( \ee^{-(q+p) \tau_{b,\wt{X}}^+}\; \1_{\{\tau_{b,\wt{X}}^+ < e_{\gamma}  \}}\Bigr) g^{(q,p)}_{\lambda,\gamma}(b) \notag \\
				&= \gamma \int^b_0 \E_x \Bigl( \int^\infty_0 \ee^{(q+\gamma_p)t} \; \1_{\{\widetilde{X}_t \in \dd u, \, t <  \tau_{b,\wt{X}}^+\}} \dd t\Bigr) +  \frac{Z^{(q+\gamma_p)}(x)}{Z^{(q+\gamma_p)}(b)} g^{(q,p)}_{\lambda,\gamma}(b) \notag \\
				&= \frac{Z^{(q+\gamma_p)}(x)}{Z^{(q+\gamma_p)}(b)} \Bigl( g^{(q,p)}_{\lambda,\gamma}(b)+\gamma \int^b_0 W^{(q+\gamma_p)}(b-u) \dd u \Bigr) - \gamma \int^b_0 W^{(q+\gamma_p)}(x-u) \dd u.  \label{eq:OccupationProp2-SMPx<bIdentity1}
			\end{align}

			For $x \in [b,a]$, notice that $ \{U_t, t < \tau_{b,U}^-\} \overset{\dd}{=} \{Y_t, t < \tau_{b,Y}^-\}$ w.r.t.~$\PP_x$ for these $x$-values, and so, by conditioning on $\tau_{b,U}^-$ and using the strong Markov property, 
			\begin{align}
				g^{(q,p)}_{\lambda,\gamma}(x)
				=& \; \E_x \Bigl( \ee^{-q \tau_{b,U}^-}  \1_{\{\tau_{b,U}^- < T_{a,U}^{+,\lambda} \}} g^{(q,p)}_{\lambda,\gamma}(U_{\tau_{b,U}^-}) \Bigr) \notag \\
				=& \; \frac{1}{Z^{(q+\gamma_p)}(b)} \Bigl( g^{(q,p)}_{\lambda,\gamma}(b)+\gamma \int^b_0 W^{(q+\gamma_p)}(b-u) \dd u \Bigr) \,  \E_x \Bigl( \ee^{-q \tau_{b,Y}^-}  \1_{\{\tau_{b,Y}^- < T_{a,U}^{+,\lambda} \}} Z^{(q+\gamma_p)}(Y_{\tau_{b,Y}^-}) \Bigr) \notag \\
				&\quad - \gamma \int^b_0 \E_x \Bigl( \ee^{-q \tau_{b,Y}^-}  \1_{\{\tau_{b,Y}^- < T_{a,Y}^{+,\lambda} \}} W^{(q+\gamma_p)}(Y_{\tau_{b,Y}^-} - u) \Bigr) \dd u \notag \\
				&= \frac{1}{Z^{(q+\gamma_p)}(b)} \Bigl( g^{(q,p)}_{\lambda,\gamma}(b)+\gamma \int^b_0 W^{(q+\gamma_p)}(b-u) \dd u \Bigr) \,  \Bigl( \overline{z}_b^{(q+\gamma_p,-\gamma_p)}(x) \notag \\
				& \quad - \frac{\W^{(q)}(x-b)}{\Z^{(q)}(a-b,\varphi_{q+\lambda})} \lambda \int^\infty_0 \ee^{-\varphi_{q+\lambda}y}\,\overline{z}_b^{(q+\gamma_p,-\gamma_p)}(y+a) \dd y \Bigr) \notag \\
				&\quad - \gamma \int^b_0 \Bigl( \overline{w}_b^{(q+\gamma_p,-\gamma_p)}(x;u) - \frac{\W^{(q)}(x-b)}{\Z^{(q)}(a-b,\varphi_{q+\lambda})} \lambda \int^\infty_0 \ee^{-\varphi_{q+\lambda}y}\,\overline{w}_b^{(q+\gamma_p,-\gamma_p)}(y+a;u) \dd y \Bigr) \dd u,
				\label{eq:OccupationProp2-NewIntermediate}
			\end{align}
			where the second last equality follows by a substitution of Eq.~\eqref{eq:OccupationProp2-SMPx<bIdentity1} and Fubini's theorem, and the last equality follows by using Lemma \ref{lem:SimilartoRonnieLemma}.
			
			Now,  letting  $x=b$ in the above and solving w.r.t.~$g^{(q,p)}_{\lambda,\gamma}(b)$, we find that
			\begin{equation}
				g^{(q,p)}_{\lambda,\gamma}(b) =  \frac{\int^\infty_0 \ee^{-\varphi_{q+\lambda}y} \, \Bigl( \gamma \int^b_0 \overline{w}_b^{(q+\gamma_p,-\gamma_p)}(y+a;u) \dd u\Bigr) \dd y}{\int^\infty_0 \ee^{-\varphi_{q+\lambda}y} \,  \overline{z}_b^{(q+\gamma_p,-\gamma_p)}(y+a) \dd y} Z^{(q+\gamma_p)}(b) - \gamma \int^b_0 W^{(q+\gamma_p)}(b-u) \dd u. \notag
			\end{equation}
			Lastly, by noticing that $\overline{w}_b^{(q+\gamma_p,-\gamma_p)}(x;u) = W^{(q+\gamma_p)}(x-u)$ and $\overline{z}_b^{(q+\gamma_p,-\gamma_p)}(x) = Z^{(q+\gamma_p)}(x)$ for $x,u \in [0,b)$, we substitute the above quantity into  Eqs.~\eqref{eq:OccupationProp2-SMPx<bIdentity1} and \eqref{eq:OccupationProp2-NewIntermediate} to obtain the form of Eq.~\eqref{Eq:OccupationProp2-Identity1} for $x \in [0,a]$ which concludes the proof.
		\end{proof}
		\begin{remark}\label{Cor:OccupationTimes2}\upshape{
			By noticing that 
			\begin{equation}
				\lim\limits_{a \rightarrow \infty} \frac{\overline{w}_b^{(q+\gamma_p,-\gamma_p)}(y+a;u)}{\W^{(q)}(a)} = \ee^{\varphi_q (y+b)} \alpha_1^{(q,q+\gamma_p)}(b;u), \quad \lim\limits_{a \rightarrow \infty} \frac{\overline{z}_b^{(q+\gamma_p,-\gamma_p)}(y+a;u)}{\W^{(q)}(a)} = \ee^{\varphi_q (y+b)} \alpha_2^{(q,q+\gamma_p)}(b), \notag
			\end{equation}
			and that 
			\begin{align}
				\lim\limits_{a \rightarrow \infty} \frac{\int^\infty_0 \ee^{-\varphi_{q+\lambda}y}\, \Bigl( \gamma \int^b_0 \overline{w}_b^{(q+\gamma_p,-\gamma_p)}(y+a;u) \dd u\Bigr) \dd y}{\int^\infty_0 \ee^{-\varphi_{q+\lambda}y} \,  \overline{z}_b^{(q+\gamma_p,-\gamma_p)}(y+a) \dd y} &= \frac{\int^\infty_0 \ee^{-\varphi_{q+\lambda}y}\, \Bigl( \gamma \int^b_0 \ee^{\varphi_q (y+b)} \alpha_1^{(q,q+\gamma_p)}(b;u) \dd u\Bigr) \dd y}{\int^\infty_0 \ee^{-\varphi_{q+\lambda}y} \,  \ee^{\varphi_q (y+b)} \alpha_2^{(q,q+\gamma_p)}(b) \dd y} \notag \\
				&= \gamma \int^b_0 \frac{  \alpha_1^{(q,q+\gamma_p)}(b;u) }{ \alpha_2^{(q,q+\gamma_p)}(b)} \dd u, \notag
			\end{align}
			we may take $a \rightarrow \infty$ in Eq.~\eqref{Eq:OccupationProp2-Identity1} to obtain 
			\begin{align}
				\E_x \Bigl( \ee^{-q T_{b,U}^{-,\gamma} - p \int_0^{T_{b,U}^{-,\gamma}} 1_{\left\{U_s<b\right\}} \mathrm{d} s} \; \1_{\{T_{b,U}^{-,\gamma} < \infty \}}\Bigr) &=   \overline{z}_b^{(q+\gamma_p,-\gamma_p)}(x) \cdot \gamma \int^b_0 \frac{  \alpha_1^{(q,q+\gamma_p)}(b;u) }{ \alpha_2^{(q,q+\gamma_p)}(b)} \dd u  - \gamma \int^b_0 \overline{w}_b^{(q+\gamma_p,-\gamma_p)}(x;u) \dd u. \notag  
			\end{align} 
				}
			\end{remark}
			\noindent Combining the above results of Propositions \ref{Prop:OccupationTimes1} and \ref{Prop:OccupationTimes2}, we have the following two-sided occupation times.
			\begin{proposition}\label{Prop:JointOccupationTimes}
				Denote $T^{\lambda,\gamma} := T_{a,U}^{+,\lambda} \wedge T_{b,U}^{-,\gamma}$. Let $p,q \geq 0$ and $\lambda, \gamma> 0$. Then, for $x,b \in [0,a]$, we have that
				\begin{align}
					\E_x \Bigl(  \ee^{-q T^{\lambda,\gamma} - p \int_0^{T^{\lambda,\gamma}} 1_{\left\{U_s<b\right\}} \mathrm{d} s} \; \1_{\{T^{\lambda,\gamma} < \infty\}}\Bigr) &= \frac{\int^\infty_0 \ee^{-\varphi_{q+\lambda}y}\, \Bigl(1 +  \gamma \int^b_0 \overline{w}_b^{(q+\gamma_p,-\gamma_p)}(y+a;u) \dd u\Bigr) \dd y}{\int^\infty_0 \ee^{-\varphi_{q+\lambda}y} \,  \overline{z}_b^{(q+\gamma_p,-\gamma_p)}(y+a) \dd y} \overline{z}_b^{(q+\gamma_p,-\gamma_p)}(x) \notag \\
					&\quad - \gamma \int^b_0 \overline{w}_b^{(q+\gamma_p,-\gamma_p)}(x;u) \dd u. \label{Eq:JointOccupationProp-Identity1}
				\end{align} 
			\end{proposition}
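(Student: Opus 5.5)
The plan is to split the event $\{T^{\lambda,\gamma}<\infty\}$ according to which Poissonian time occurs first, and then add the two earlier results. Since $N_1^\lambda$ and $N_2^\gamma$ are independent Poisson processes, a.s.\ no arrival times coincide. Hence $\PP_x$-a.s.
\[
\1_{\{T^{\lambda,\gamma}<\infty\}} = \1_{\{T_{a,U}^{+,\lambda}<T_{b,U}^{-,\gamma}\}} + \1_{\{T_{b,U}^{-,\gamma}<T_{a,U}^{+,\lambda}\}}.
\]
On the first event $T^{\lambda,\gamma}=T_{a,U}^{+,\lambda}$, and on the second $T^{\lambda,\gamma}=T_{b,U}^{-,\gamma}$. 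So the left-hand side of Eq.~\eqref{Eq:JointOccupationProp-Identity1} is the sum of the left-hand sides of Eq.~\eqref{Eq:OccupationProp1-Identity1} and Eq.~\eqref{Eq:OccupationProp2-Identity1}.

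Next I would substitute the two closed forms. Proposition \ref{Prop:OccupationTimes1} gives
\[
\frac{\overline{z}_b^{(q+\gamma_p,-\gamma_p)}(x)}{\varphi_{q+\lambda}\int_0^\infty \ee^{-\varphi_{q+\lambda}y}\overline{z}_b^{(q+\gamma_p,-\gamma_p)}(y+a)\dd y}.
\]
Writing $1/\varphi_{q+\lambda}=\int_0^\infty \ee^{-\varphi_{q+\lambda}y}\,\dd y$, which is valid since $\varphi_{q+\lambda}>0$ for $\lambda>0$, turns the numerator factor into $\int_0^\infty \ee^{-\varphi_{q+\lambda}y}\cdot 1\,\dd y$. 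This has the same denominator as the first term of Proposition \ref{Prop:OccupationTimes2}.

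Adding the two fractions, the numerators combine by linearity into
\[
\int_0^\infty \ee^{-\varphi_{q+\lambda}y}\Bigl(1+\gamma\int_0^b \overline{w}_b^{(q+\gamma_p,-\gamma_p)}(y+a;u)\,\dd u\Bigr)\dd y,
\]
multiplied by $\overline{z}_b^{(q+\gamma_p,-\gamma_p)}(x)$. The remaining term $-\gamma\int_0^b\overline{w}_b^{(q+\gamma_p,-\gamma_p)}(x;u)\,\dd u$ carries over unchanged, which gives Eq.~\eqref{Eq:JointOccupationProp-Identity1}.

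Two justifications need care, and the second is the only real obstacle. First, the tie event $\{T_{a,U}^{+,\lambda}=T_{b,U}^{-,\gamma}<\infty\}$ must be null; this follows because independent Poisson processes a.s.\ share no jump times. Second, both Propositions must hold for all $x\in[0,a]$ with the same $\lambda,\gamma>0$ and $p,q\ge0$, which they do by their statements. I would also check that all integrals are finite, so that splitting the numerator by linearity is legitimate. Both the integrand $\overline{w}_b$ and $\overline{z}_b$ are scale functions of exponential order $\varphi_{q}<\varphi_{q+\lambda}$, so the Laplace integrals converge.
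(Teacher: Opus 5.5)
Your proposal is correct and follows the paper's own proof. The paper likewise splits the expectation over $\{T_{a,U}^{+,\lambda}<T_{b,U}^{-,\gamma}\}$ and $\{T_{b,U}^{-,\gamma}<T_{a,U}^{+,\lambda}\}$, adds Propositions~\ref{Prop:OccupationTimes1} and \ref{Prop:OccupationTimes2}, and uses $1/\varphi_{q+\lambda}=\int_0^\infty \ee^{-\varphi_{q+\lambda}y}\,\dd y$ to merge the two fractions; your remarks on the null tie event and on convergence of the Laplace integrals are extra care the paper leaves implicit.
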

			
			\begin{proof}
				Noticing that 
				\begin{align}
					\E_x \Bigl( \ee^{-q T^{\lambda,\gamma} - p \int_0^{T^{\lambda,\gamma}} 1_{\left\{U_s<b\right\}} \mathrm{d} s} \; \1_{\{T^{\lambda,\gamma} < \infty\}}\Bigr) &= 	\E_x \Bigl( \ee^{-q T_{a,U}^{+,\lambda} - p \int_0^{T_{a,U}^{+,\lambda}} 1_{\left\{U_s<b\right\}} \mathrm{d} s} \; \1_{\{T_{a,U}^{+,\lambda} < T_{b,U}^{-,\gamma}\}}\Bigr) \notag \\
					&\quad + \E_x \Bigl( \ee^{-q T_{b,U}^{-,\gamma} - p \int_0^{T_{b,U}^{-,\gamma}} 1_{\left\{U_s<b\right\}} \mathrm{d} s} \; \1_{\{T_{b,U}^{-,\gamma} < T_{a,U}^{+,\lambda}\}}\Bigr), \notag
				\end{align}
				and using Eqs.~\eqref{Eq:OccupationProp1-Identity1} and \eqref{Eq:OccupationProp2-Identity1} as well as that $1/\varphi_{q+\lambda} = \int^\infty_0 \ee^{-\varphi_{q+\lambda} y} \dd y$, Eq.~\eqref{Eq:JointOccupationProp-Identity1} immediately follows. 
			\end{proof}
			\noindent Finally, with the purpose of completing the full spectrum of occupation time results, we provide the following occupation times identities without proof, since these proofs are similar to that of  Propositions \ref{Prop:OccupationTimes1} and \ref{Prop:OccupationTimes2}.  
			
			\begin{proposition}
				Let $p,q \geq 0$ and $\lambda,\gamma > 0$. Then, for $x,b \in [0,a]$, we have that
				\begin{align}
					&\mathbb{E}_x \Bigl(\ee^{-q T_{a,U}^{-,\lambda} - p\int^{T_{a,U}^{-,\lambda}}_0 \1_{\{U_s < b\}} \dd s} 1_{\left\{  T_{a,U}^{-,\lambda} < T_{b,U}^{+,\gamma} \right\}} \Bigr) =  \frac{\overline{z}_b^{(q+\lambda_p,\gamma-\lambda_p)}(x)  + \lambda \int^x_b \W^{(q+\gamma+\lambda)}(x-y)  \overline{z}_b^{(q+\lambda_p,\gamma-\lambda_p)}(y) \dd y }{\ee^{\varphi_{q+\gamma} a} \alpha_2^{(q+\gamma,q+\lambda_p)}(b) +  \lambda \int^a_b \Z^{(q+\gamma+\lambda)}(a-y,\varphi_{q+\gamma})\overline{z}_b^{(q+\lambda_p,\gamma-\lambda_p)}(y) \dd y}  \notag \\
					&\quad \times \Bigl( \lambda \int^b_0 \Bigl[ \ee^{\varphi_{q+\gamma} a} \alpha_1^{(q+\gamma,q+\lambda_p)}(b;u) +  \lambda \int^a_b \Z^{(q+\gamma+\lambda)}(a-y,\varphi_{q+\gamma})\overline{w}_b^{(q+\lambda_p,\gamma-\lambda_p)}(y;u) \dd y  \Bigr] \dd u \notag \\
					& \quad + \lambda \int^a_b \Z^{(q+\gamma+\lambda)}(a-u,\varphi_{q+\gamma}) \dd u \Bigr) -  \lambda \int^a_0 \Bigl(
					\overline{w}_b^{(q+\lambda_p,\gamma-\lambda_p)}(x;u) + \lambda \int^x_b \W^{(q+\gamma+\lambda)}(x-y)  \overline{w}_b^{(q+\lambda_p,\gamma-\lambda_p)}(y;u) \dd y \Bigr) \dd u , \notag
				\end{align}
				and 	
				\begin{align}
					\mathbb{E}_x \Bigl(\ee^{-q T_{b,U}^{+,\gamma} - p\int^{T_{b,U}^{+,\gamma}}_0 \1_{\{U_s < b\}} \dd s} 1_{\left\{  T_{b,U}^{+,\gamma} < T_{a,U}^{-,\lambda} \right\}} \Bigr) &=  \frac{\overline{z}_b^{(q+\lambda_p,\gamma-\lambda_p)}(x)  + \lambda \int^x_b \W^{(q+\gamma+\lambda)}(x-y)  \overline{z}_b^{(q+\lambda_p,\gamma-\lambda_p)}(y) \dd y }{\ee^{\varphi_{q+\gamma} a} \alpha_2^{(q+\gamma,q+\lambda_p)}(b) +  \lambda \int^a_b \Z^{(q+\gamma+\lambda)}(a-y,\varphi_{q+\gamma})\overline{z}_b^{(q+\lambda_p,\gamma-\lambda_p)}(y) \dd y}  \notag \\
					&\quad \times \gamma \int^\infty_b  \Z^{(q+\gamma+\lambda)}(a-u,\varphi_{q+\gamma}) - \gamma \int^x_b \W^{(q+\gamma+\lambda)}(x-u)\dd u. \notag
				\end{align}
			\end{proposition}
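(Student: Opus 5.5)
We prove the two identities the way Propositions \ref{Prop:OccupationTimes1} and \ref{Prop:OccupationTimes2} were proved, but on the ``$T_{a,U}^{-,\lambda}$ / $T_{b,U}^{+,\gamma}$'' side, using part (ii) of Lemmas \ref{lem:PoissonPotentialsAndFluctuations} and \ref{lem:SimilartoRonnieLemma} in place of part (i). By strong approximation it suffices to treat the bounded variation case. Let $k^{(q,p)}_{\lambda,\gamma}(x)$ denote the first expectation. For $x\in[0,b)$ we have $\{U_t,t<\tau_{b,U}^+\}\overset{\dd}{=}\{\wt X_t,t<\tau_{b,\wt X}^+\}$. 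The $\gamma$-observer cannot act below $b$, while the $\lambda$-observer kills (stops with success) at rate $\lambda$ because $U<b\le a$ there. Hence $T_{a,U}^{-,\lambda}\wedge\tau^+_{b,U}\overset{\dd}{=}e_\lambda\wedge\tau^+_{b,U}$, and the occupation term adds $p$ to the discount rate. Conditioning on $\tau_{b,U}^+$ and using \eqref{eq:ReflectedBelowExitfromAbove} gives
\[
k(x)=\lambda\int_0^b\Bigl(\tfrac{Z^{(q+\lambda_p)}(x)}{Z^{(q+\lambda_p)}(b)}W^{(q+\lambda_p)}(b-u)-W^{(q+\lambda_p)}(x-u)\Bigr)\dd u+\tfrac{Z^{(q+\lambda_p)}(x)}{Z^{(q+\lambda_p)}(b)}k(b).
\]
This is exactly the $u$-integrated form of \eqref{eq:Thm2-SMPx<bIdentity1} with $q$ replaced by $q+p$ on $[0,b)$.

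For $x\in[b,a]$ the process behaves like $Y$ until $\tau_{b,Y}^-$, with independent $e_\gamma$ killing (failure) and $\lambda$-observations below $a$ (success). We condition on $\tau_{b,Y}^-$. The success-before-$\tau_{b,Y}^-$ term is $\lambda\int_b^a$ of the potential density \eqref{eq:Landriault-Identity3}, with $q$ replaced by $q+\gamma$. The remaining term involves $\E_x\bigl(\ee^{-(q+\gamma)\tau_{b,Y}^-}\1_{\{\tau_{b,Y}^-<T_{a,Y}^{-,\lambda}\}}f(Y_{\tau_{b,Y}^-})\bigr)$ with $f=Z^{(q+\lambda_p)}$ and $f=W^{(q+\lambda_p)}(\cdot-u)$. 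These we evaluate by Lemma \ref{lem:SimilartoRonnieLemma}(ii), with the parameter pair $(p,q)$ there replaced by $(q+\lambda_p,\,q+\gamma)$. This produces the functions $\overline{w}_b^{(q+\lambda_p,\gamma-\lambda_p)}$ and $\overline{z}_b^{(q+\lambda_p,\gamma-\lambda_p)}$, their $\W^{(q+\gamma+\lambda)}$-convolutions, and the constants $\ee^{\varphi_{q+\gamma}a}\alpha_j^{(q+\gamma,q+\lambda_p)}$. The result is an equation of the same shape as \eqref{Thm2-eq:AlmostLastRenewalEq}, integrated in $u$ against $\lambda\,\dd u$ over $[0,a]$.

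Next we merge the $u\in[b,a]$ pieces. As in the proof of Theorem \ref{Thm:SecondMain-ReflectedRefractedDoubleKilledPoissonPotential}, we use Eq.~\eqref{eq:SecondGenFunc-RefractedWTilde2} together with \eqref{eq:LoeffenLTIdentity1}. This gives $\overline{w}_b+\lambda\W^{(q+\gamma+\lambda)}*\overline{w}_b=\W^{(q+\gamma+\lambda)}(x-u)$ for $u\ge b$, which lets us write the free term as $\lambda\int_0^a(\cdots)\dd u$. We then set $x=b$, use $\overline{z}_b(b)=Z^{(q+\lambda_p)}(b)$ and $\overline{w}_b(b;u)=W^{(q+\lambda_p)}(b-u)\1_{\{u<b\}}$, and solve the resulting linear equation for $k(b)$. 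This is possible since $\W(0)\neq0$ in bounded variation. Substituting back into both regimes yields the stated formula. For the second identity, $j(x)$ say, the only changes are these. Below $b$ there is no gain term, since $\gamma$ acts only above $b$; the equation is therefore homogeneous, $j(x)=\frac{Z^{(q+\lambda_p)}(x)}{Z^{(q+\lambda_p)}(b)}j(b)$. Above $b$, the success term becomes $\gamma\int_b^\infty$ of the density \eqref{eq:Landriault-Identity3}. The same solve at $x=b$ then gives the second display.

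The main obstacle is bookkeeping rather than probability. Lemma \ref{lem:SimilartoRonnieLemma}(ii) is stated with discount $q$ and $f^{(p,q)}_{2,\jj}$ built from $\overline{w}_b^{(p,q-p)}$. To apply it here we must check that the constants $c_\jj$ match $\ee^{\varphi_{q+\gamma}a}\alpha_\jj^{(q+\gamma,q+\lambda_p)}$ after the shift; the factor $\ee^{\varphi_{q+\gamma}(a-b)}c_\jj$ should reduce to exactly this. We must also check that the $u\in[b,\infty)$ contributions, in particular $\Z^{(q+\gamma+\lambda)}(a-u,\varphi_{q+\gamma})$ integrated over $[b,a]$ versus $[b,\infty)$, combine consistently with the indicator structure. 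I would verify the constant matching first on the case $\gamma\to0$ against Proposition \ref{Thm:MainPoissonPotentialAbove}(ii).
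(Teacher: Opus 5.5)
Your proposal is correct and is essentially the argument the paper intends; the paper omits this proof, saying only that it mirrors Propositions \ref{Prop:OccupationTimes1}--\ref{Prop:OccupationTimes2}. The ingredients match: renewal at $\tau_{b,U}^{\pm}$, Lemma \ref{lem:PoissonPotentialsAndFluctuations}(ii) and Lemma \ref{lem:SimilartoRonnieLemma}(ii) with $(p,q)\mapsto(q+\lambda_p,\,q+\gamma)$, the merge $\overline{w}_b+\lambda\W^{(q+\gamma+\lambda)}*\overline{w}_b=\W^{(q+\gamma+\lambda)}(x-u)$ for $u\ge b$, and the solve at $x=b$ using $\W^{(q+\gamma+\lambda)}(0)\neq 0$. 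The bookkeeping you flag is immediate, since $c_\jj^{(q+\gamma,q+\lambda_p)}\ee^{\varphi_{q+\gamma}(a-b)}=\ee^{\varphi_{q+\gamma}a}\alpha_\jj^{(q+\gamma,q+\lambda_p)}$ by definition, and carrying your computation through reproduces both displays exactly; in the second, the statement is missing the $\dd u$ in $\gamma\int_b^\infty\Z^{(q+\gamma+\lambda)}(a-u,\varphi_{q+\gamma})\,\dd u$.
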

			It is then clear that the occupation time w.r.t.~$T_{b,U}^{+,\gamma} \wedge T_{a,U}^{-,\lambda}$ follows by using the above two quantities and a similar proof to that of Proposition \ref{Prop:JointOccupationTimes}.

			\section*{Appendix}\label{appendix}

			\subsection*{Proof of Lemma \ref{lem:RefractedConvolutionIdentities}.}

			To prove (i) -- (iv), we first require some auxiliary identities. In particular, for $p, p+q, x \geq 0$ and $x \in [0,a]$, we have the following identity  (see p.~1176 from \cite{F2014}):
			\begin{equation}
				\begin{aligned}
					& \delta \int_0^x \mathbb{W}^{(p+q)}(x-y) W^{(p)}(y) \mathrm{d} y+q \int_0^x \int_0^y \mathbb{W}^{(p+q)}(y-z) W^{(p)}(z) \mathrm{d} z \mathrm{~d} y \newline \\
					&=\int_0^x \mathbb{W}^{(p+q)}(y) \mathrm{d} y-\int_0^x W^{(p)}(y) \mathrm{d} y.
				\end{aligned} \label{eq:RenaudConvolutionIdentity}
			\end{equation}
			Then, by differentiating Eq.~\eqref{eq:RenaudConvolutionIdentity} with respect to $x$, we obtain 
			\begin{equation}
				\begin{aligned}
					& q \int_0^x \mathbb{W}^{(p+q)}(x-y) W^{(p)}(y) \mathrm{d} y =\mathbb{W}^{(p+q)}(x) -W^{(p)}(x) - \delta \int_{[0, x)} \mathbb{W}^{(p+q)}(x-y) W^{(p)}(\mathrm{d} y),
				\end{aligned} \label{eq:WangMainConvolutionIdentity}
			\end{equation}
			and by using the definition of $Z^{(p)}(\cdot)$, Eq.~\eqref{eq:RenaudConvolutionIdentity} can be rearranged to yield 
			\begin{equation}
				q \int_0^x \mathbb{W}^{(p+q)}(x-y) Z^{(q)}(y) \mathrm{d} y= \mathbb{Z}^{(p+q)}(x) - Z^{(p)}(x) - \delta p \int_0^x \mathbb{W}^{(p+q)}(x-y) W^{(p)}(y) \mathrm{d} y. \label{eq:WangZConvolutionIdentity}
			\end{equation}
			(i) Since $w^{(p)}(x-u) = W^{(p)}(x-u)$ for $x,u \in [0,b)$, and since for $p\geq 0$ and $y <0$ we have that $W^{(p)} (y) = 0$, it is easily seen that Eq.~\eqref{eq:RefractedWConvolutionIdentity} holds since it is reduces to that in Eq.~\eqref{eq:WangMainConvolutionIdentity}. 
			
			Now, for the case of $x \geq b$, observe that
			\begin{align}
				&q \int_0^x \mathbb{W}^{(p+q)}(x-y) w^{(p)}(y;u) \mathrm{d} y \notag \\
				&= q \int_0^x \mathbb{W}^{(p+q)}(x-y) W^{(p)}(y-u) \dd y +  q \int_0^x \mathbb{W}^{(p+q)}(x-y) \Bigl(\delta \int^{y-u}_{b-u} \W^{(p)}(y-u-v) W^{(p)\prime}(v) \dd v  \Bigr)\dd y  \notag \\
				&= q \int_{0}^{x-u} \mathbb{W}^{(p+q)}(x-u-y) W^{(p)}(y) \dd y +  \delta \int_{b-u}^{x-u}  \Bigl( q \int^{x-u}_{0} \mathbb{W}^{(p+q)}(x-u-y) \W^{(p)}(y-v) \dd y  \Bigr) W^{(p)\prime}(v) \dd v \notag \\
				&= q \int_{0}^{x-u} \mathbb{W}^{(p+q)}(x-u-y) W^{(p)}(y) \dd y +  \delta \int_{b-u}^{x-u}  \Bigl( \W^{(p+q)}(x-u-v) -  \W^{(p)}(x-u-v) \Bigr) W^{(p)\prime}(v) \dd v \notag \\
				&= \W^{(p+q)}(x-u) - W^{(p)}(x-u) - \delta \int_{[0,x-u)}\W^{(p+q)}(x-u-y) W^{(p)}(\dd y) \notag \\
				& \eqsp +  \delta \int_{b-u}^{x-u}  \W^{(p+q)}(x-u-v) W^{(p)\prime}(v) \dd v 
				- \delta \int_{b-u}^{x-u}   \W^{(p)}(x-u-v) W^{(p)\prime}(v) \dd v \notag \\
				&= 	\W^{(p+q)}(x-u) - w^{(p)}(x;u) - \delta \int_{[0,b-u)} \W^{(p+q)}(x-u-y) W^{(p)} (\dd y), \label{eq:Lemma-IdentityforConclusion}
			\end{align}
			for which the second equality uses that $W^{(p)}(y) = 0$ for $y<0$, the third equality follows by using Eq.~\eqref{eq:LoeffenLTIdentity1} and the fourth follows by using Eq.~\eqref{eq:WangMainConvolutionIdentity}. 
			
			\medskip
			
			\noindent (ii) Since $z^{(p)}(x) = Z^{(p)}(x)$ for $x \in [0,b)$, and since for $p\geq 0$ and $y <0$ we have that $W^{(p)} (y) = 0$, it easily seen that Eq.~\eqref{eq:RefractedZConvolutionIdentity} holds since it is reduces to that in Eq.~\eqref{eq:WangZConvolutionIdentity}.  For the case of $x \geq b$, a similar argument as that used to derive Eq.~\eqref{eq:Lemma-IdentityforConclusion} by substituting Eqs.~\eqref{eq:LoeffenLTIdentity2} and \eqref{eq:WangZConvolutionIdentity} can be done to complete the proof.
			
			\medskip
			
			\noindent (iii) By using  Eq.~\eqref{eq:RefractedWScaleFunc},
			\begin{align}
				\ovl{w}_b^{(p,q)}(x;u) 
				&= \Bigl( w^{(p)}(x;u) +q \int_b^x \mathbb{W}^{(p+q)}(x-y) w^{(p)}(y;u) \mathrm{d} y \Bigr) \1_{\{ u \in [0,b)\}} \notag \\
				& \eqsp + \Bigl( \W^{(p)}(x-u) +q \int_b^x \mathbb{W}^{(p+q)}(x-y) \W^{(p)}(y-u) \mathrm{d} y \Bigr) \1_{\{u \in [b,\infty)\}} \notag \\
				&= \Bigl( \W^{(p+q)}(x-u) -q \int_0^b \mathbb{W}^{(p+q)}(x-y) w^{(p)}(y;u) \mathrm{d} y  \notag \\
				& \eqsp - \delta \int_{[0,b-u)} \W^{(p+q)}(x-u-y) W^{(p)} (\dd y) \Bigr) \1_{\{ u \in [0,b)\}} \notag \\
				& \eqsp + \Bigl( \W^{(p+q)}(x-u) -q \int_{-u}^{b-u} \mathbb{W}^{(p+q)}(x-u-y) \W^{(p)}(y) \mathrm{d} y \Bigr) \1_{\{u \in [b,\infty)\}} \notag \\
				&= \W^{(p+q)}(x-u) -q \int_0^b \mathbb{W}^{(p+q)}(x-y) W^{(p)}(y-u) \mathrm{d} y \1_{\{ u \in [0,b)\}}  \notag \\
				& \eqsp - \delta \int_{[0,b-u)} \W^{(p+q)}(x-u-y) W^{(p)} (\dd y) \notag \\
				& = \W^{(p+q)}(x-u) -q \int_0^{b-u} \mathbb{W}^{(p+q)}(x-u-y) W^{(p)}(y) \mathrm{d} y \notag \\
				&\eqsp - \delta \int_{[0,b-u)} \W^{(p+q)}(x-u-y) W^{(p)} (\dd y), \notag
			\end{align}
			where the second equality uses  Eq.~\eqref{eq:RefractedWConvolutionIdentity}, and the second last equality follows by using that $w^{(p)}(y;u) = W^{(p)}(y-u)$ for $u \in [0,b)$. 
			
			\medskip
			
			\noindent 
			(iv) The identity in Eq.~\eqref{eq:SecondGenFunc-RefractedZTilde2} can be proved by using a similar method to identity (iii) and Eq.~\eqref{eq:RefractedZConvolutionIdentity}.
			
			\medskip
			
			\noindent 
			(v) The identities follow by using Lemma 1 in \cite{F2014}.
			\subsection*{Proof of Lemma \ref{lem:PoissonPotentialsAndFluctuations}.}
			\textit{\upshape{(i)}} Observe by Eq.~(3.3) and Theorem 3.1, Eq.~(3.18) in \cite{LLWX2018} that, for $x \in [0,a]$ and $y \geq 0$, 
			$$
			\E_x\Bigl( \int^\infty_0 e^{-q t} \1_{\{Y_t \in \dd y, \; t < T_{a,Y}^{+,\lambda} \wedge \tau_{0,Y}^- \}} \dd t \Bigr) = \Bigl( \frac{\W^{(q)}(x)}{\Z^{(q)}(a, \varphi_{q+\lambda})} \Z^{(q)}(a-y, \varphi_{q+\lambda}) - \W^{(q)}(x-y)\Bigr) \dd y.
			$$
			Then, by using spatial homogeneity, the first identity follows. 
			
			For the second identity, observe from Eq.~(3.3) and Corollary 3.1 in \cite{LLWX2018} that, for $x \in [0,a]$ and $y \geq a$, 
			$$
			\E_x\bigl( \ee^{-q T_{a,Y}^{+,\lambda}} \1_{\{Y_{T_{a,Y}^{+,\lambda}} \in \dd y, \; T_{a,Y}^{+,\lambda} < \tau_{0,Y}^-\}} \bigr) = \lambda \, \Bigl( \frac{\W^{(q)}(x)}{\Z^{(q)}(a, \varphi_{q+\lambda})} \Z^{(q)}(a-y, \varphi_{q+\lambda}) - \W^{(q)}(x-y)\Bigr) \, \dd y. 
			$$
			Now, the second identity follows since by using spatial homogeneity, that $\Z^{(q)}(a-y, \varphi_{q+\lambda}) = \ee^{\varphi_{q+\lambda}(a-y)}$ and that $\W^{(q)}(x-y) = 0$ for $x \in [b,a]$ and $y > a$.
			
			\medskip
			
			\noindent \textit{\upshape{(ii)}} The proof follows similarly as for that in part \upshape{(i)}. Indeed, the first quantity follows from Eq.~(3.10) and Theorem 3.1, Eq~(3.14) in \cite{LLWX2018}, whilst the second quantity follows from Corollary 3.1 in \cite{LLWX2018}.
			\color{black}
			
			\subsection*{Proof of Lemma \ref{lem:SimilartoRonnieLemma}.}
			(i) For  $\jj = 1,2$,  using the strong Markov property, we observe that 
			\begin{align}
				\E_x\bigl( \ee^{-q\tau_{b,Y}^-} \1_{\{\tau_{b,Y}^- < T_{a,Y}^{+,\lambda}\}} f^{(p)}_{1,\jj}(Y_{\tau_{b,Y}^- }) \bigr)
				=& \;  \E_x\bigl( \ee^{-q\tau_{b,Y}^-} \1_{\{\tau_{b,Y}^- < \infty\}} f^{(p)}_{1,\jj}(Y_{\tau_{b,Y}^- }) \bigr)  \notag \\
				&\quad  - \E_x\bigl( \ee^{-q T_{a,Y}^{+,\lambda}} \1_{\{T_{a,Y}^{+,\lambda} < \tau_{b,Y}^-\}} \E_{Y_{T_{a,Y}^{+,\lambda}}}\bigl( \ee^{-q \tau_{b,Y}^-} \1_{\{\tau_{b,Y}^- < \infty \}} f^{(p)}_{1,\jj}(Y_{\tau_{b,Y}^- }) \bigr) \bigr). \label{lem2-eq:MainSMPIdentity}
			\end{align}
			In order to evaluate the expectations in the above equation, we recall Eqs.~\eqref{eq:W(q)JumpDownwardsinYDynamics}--\eqref{eq:Z(q)JumpDownwardsinYDynamics}, from which it yields that 
			\begin{align}
				\E_x\bigl( \ee^{-q\tau_{b,Y}^-} \1_{\{\tau_{b,Y}^- < \infty \}} f^{(p)}_{1,\jj}(Y_{\tau_{b,Y}^- }) \bigr) 
				&= \lim\limits_{a \rightarrow \infty} \E_x\bigl( \ee^{-q\tau_{b,Y}^-} \1_{\{\tau_{b,Y}^- < \tau_{a,Y}^+\}} f^{(p)}_{1,\jj}(Y_{\tau_{b,Y}^-}) \bigr), \notag \\
				&= f^{(p,q)}_{2,\jj}(x) - \W^{(q)}(x-b) \times \lim\limits_{a \rightarrow \infty} \frac{f^{(p,q)}_{2,\jj}(a)}{\W^{(q)}(a-b)}, \label{eq:NecessaryFormforLemma}
			\end{align}
			for which it is easy to verify, from Eqs.~\eqref{eq:SecondGenFunc-RefractedWTilde2} and \eqref{eq:SecondGenFunc-RefractedZTilde2} along with Eq.~\eqref{eq:LimitofRatioofScaleFuncs} and dominated convergence, that 
			\begin{equation}
				\lim\limits_{a\uparrow \infty} \frac{f^{(p,q)}_{2,1}(a)}{\W^{(q)}(a-b)} = \ee^{\varphi_q b} \alpha_1^{(q,p)}(b;u), \quad \text{ and } \quad  \lim\limits_{a\uparrow \infty} \frac{f^{(p,q)}_{2,2}(a)}{\W^{(q)}(a-b)} = \ee^{\varphi_q b} \alpha_2^{(q,p)}(b). \label{eq:LimitOfWandZBar}
			\end{equation} 
			Hence, denoting $c_1^{(q,p)} := \ee^{\varphi_q b} \alpha_1^{(q,p)}(b;u)$ and $c_2^{(q,p)} := \ee^{\varphi_q b} \alpha_2^{(q,p)}(b)$,
			\begin{equation}
				\E_x\bigl( \ee^{-q\tau_{b,Y}^-} \1_{\{\tau_{b,Y}^- < \infty\}} f^{(p)}_{1,\jj}(Y_{\tau_{b,Y}^- }) \bigr) =  f^{(p,q)}_{2,\jj}(x) - c_\jj^{(q,p)} \W^{(q)}(x-b),  \label{lem-eq:InitialLimitQuantity}
			\end{equation}
			and thus Eq.~\eqref{lem2-eq:MainSMPIdentity} becomes
			\begin{align}
				\E_x\bigl( \ee^{-q\tau_{b,Y}^-} \1_{\{\tau_{b,Y}^- < T_{a,Y}^{+,\lambda}\}} f^{(p)}_{1,\jj}(Y_{\tau_{b,Y}^- }) \bigr)
				&=  f^{(p,q)}_{2,\jj}(x) - c_\jj^{(q,p)} \W^{(q)}(x-b) - \E_x\bigl( \ee^{-q T_{a,Y}^{+,\lambda}} \1_{\{T_{a,Y}^{+,\lambda} < \tau_{b,Y}^-\}} f^{(p)}_{2,\jj}(Y_{T_{a,Y}^{+,\lambda}}) \bigr) \notag \\
				&\eqsp + c_\jj^{(q,p)} \E_x\bigl( \ee^{-q T_{a,Y}^{+,\lambda}} \1_{\{T_{a,Y}^{+,\lambda} < \tau_{b,Y}^-\}} \W^{(q)}(Y_{T_{a,Y}^{+,\lambda}}-b) \bigr). \label{eq:AuxiliaryEqforSecondLemma} \\
				&= f^{(p,q)}_{2,\jj}(x) - c_\jj^{(q,p)} \W^{(q)}(x-b) - \int^\infty_a \E_x\bigl( \ee^{-q T_{a,Y}^{+,\lambda}} \1_{\{Y_{T_{a,Y}^{+,\lambda}} \in \dd y, \;T_{a,Y}^{+,\lambda} < \tau_{b,Y}^-\}} \bigr)  f^{(p,q)}_{2,\jj}(y) \notag \\
				&\quad  + c_\jj^{(q,p)} \int^\infty_a \E_x\bigl( \ee^{-q T_{a,Y}^{+,\lambda}} \1_{\{Y_{T_{a,Y}^{+,\lambda}} \in \dd y, \;T_{a,Y}^{+,\lambda} < \tau_{b,Y}^-\}}\bigr) \W^{(q)}(y-b) . 
				\label{lem2-eq:SMPIdentity2}
			\end{align} 
			Finally,  using Lemma \ref{lem:PoissonPotentialsAndFluctuations} (ii) and Eq.~\eqref{eq:GeneralisedZFunc}, we obtain that
			\begin{align}
				\int^\infty_a \E_x\bigl( \ee^{-q T_{a,Y}^{+,\lambda}} \1_{\{Y_{T_{a,Y}^{+,\lambda}} \in \dd y, \;T_{a,Y}^{+,\lambda} < \tau_{b,Y}^-\}} \bigr) \W^{(q)}(y-b) =& \; \frac{\W^{(q)}(x-b)}{\Z^{(q)}(a-b,\varphi_{q+\lambda})} \lambda \int^\infty_a  \ee^{\varphi_{q+\lambda}(a-y)} \W^{(q)}(y-b)\dd y \notag \\
				=& \; \W^{(q)}(x-b), \notag
			\end{align}
			and
			\begin{equation}
				\int^\infty_a \E_x\bigl( \ee^{-q T_{a,Y}^{+,\lambda}} \1_{\{Y_{T_{a,Y}^{+,\lambda}} \in \dd y, \;T_{a,Y}^{+,\lambda} < \tau_{b,Y}^-\}} \bigr) f^{(p,q)}_{2,\jj}(y) =  \frac{\W^{(q)}(x-b)}{\Z^{(q)}(a-b,\varphi_{q+\lambda})} \lambda \int^\infty_0  \ee^{-\varphi_{q+\lambda} y} f^{(p,q)}_{2,\jj}(y+a) \dd y. \notag
			\end{equation}
			The proof is completed by substituting the above two quantities into Eq.~\eqref{lem2-eq:SMPIdentity2}. 
			
			\medspace
			
			\noindent (ii) By following similar steps as in the proof of (i), we obtain the identities in Eqs.~\eqref{lem2-eq:MainSMPIdentity} and \eqref{eq:AuxiliaryEqforSecondLemma} with $T_{a,Y}^{+,\lambda}$ replaced by $T_{a,Y}^{-,\lambda}$. Hence,
			%
			\begin{align}
				\E_x\bigl( \ee^{-q\tau_{b,Y}^-} \1_{\{\tau_{b,Y}^- < T_{a,Y}^{-,\lambda}\}} f^{(p)}_{1,\jj}(Y_{\tau_{b,Y}^- }) \bigr)
				&=  f^{(p,q)}_{2,\jj}(x) - c_\jj^{(q,p)} \W^{(q)}(x-b) - \E_x\bigl( \ee^{-q T_{a,Y}^{-,\lambda}} \1_{\{T_{a,Y}^{-,\lambda} < \tau_{b,Y}^-\}} f^{(p)}_{2,\jj}(Y_{T_{a,Y}^{-,\lambda}}) \bigr) \notag \\
				&\eqsp + c_\jj^{(q,p)} \E_x\bigl( \ee^{-q T_{a,Y}^{-,\lambda}} \1_{\{T_{a,Y}^{-,\lambda} < \tau_{b,Y}^-\}} \W^{(q)}(Y_{T_{a,Y}^{-,\lambda}}-b) \bigr) \notag \\
				&= f^{(p,q)}_{2,\jj}(x) - c_\jj^{(q,p)} \W^{(q)}(x-b) - \int^a_b \E_x\bigl( \ee^{-q T_{a,Y}^{-,\lambda}} \1_{\{Y_{T_{a,Y}^{-,\lambda}} \in \dd y, \;T_{a,Y}^{-,\lambda} < \tau_{b,Y}^-\}} \bigr)  f^{(p,q)}_{2,\jj}(y) \notag \\
				&\quad  + c_\jj^{(q,p)} \int^\infty_a \E_x\bigl( \ee^{-q T_{a,Y}^{-,\lambda}} \1_{\{Y_{T_{a,Y}^{-,\lambda}} \in \dd y, \;T_{a,Y}^{-,\lambda} < \tau_{b,Y}^-\}}\bigr) \W^{(q)}(y-b) \notag \\
				&= f^{(p,q)}_{2,\jj}(x) + \lambda \int^x_b \W^{(q+\lambda)}(x-y)f^{(p,q)}_{2,\jj}(y)\dd y \notag \\ 
				&\quad-\frac{\W^{(q+\lambda)}(x-b)}{\Z^{(q+\lambda)}(a-b,\varphi_{q})} \lambda \int^a_b \Z^{(q+\lambda)}(a-y,\varphi_q) f_{2,\jj}^{(p,q)}(y) \dd y \notag \\
				&\quad - c_\jj^{(q,p)} \Bigl( \W^{(q)}(x-b) + \lambda \int^a_b \W^{(q+\lambda)}(x-y)\W^{(q)}(y-b) \dd y \notag \\
				&\quad\quad  - \frac{\W^{(q+\lambda)}(x-b)}{\Z^{(q+\lambda)}(a-b,\varphi_{q})} \lambda \int^a_b \Z^{(q+\lambda)}(a-y,\varphi_q) \W^{(q)}(y-b) \dd y \Bigr) \notag \\
				&= f^{(p,q)}_{2,\jj}(x) + \lambda \int^x_b \W^{(q+\lambda)}(x-y)f^{(p,q)}_{2,\jj}(y)\dd y \notag \\
				&\quad - \frac{\W^{(q+\lambda)}(x-b)}{\Z^{(q+\lambda)}(a-b,\varphi_{q})}\Bigl( c_\jj^{(q,p)} e^{\varphi_q (a-b)} + \lambda \int^a_b \Z^{(q+\lambda)}(a-y,\varphi_q) f_{2,\jj}^{(p,q)}(y) \dd y \Bigr), \notag
			\end{align} 
			where the second last equality follows by using Lemma \ref{lem:PoissonPotentialsAndFluctuations} (ii) Eq.~\eqref{eq:Landriault-Identity4}, and the last equality follows by using Eqs.~\eqref{eq:GeneralisedZFunc}, \eqref{eq:LoeffenLTIdentity1} and \eqref{eq:LoeffenLTIdentity2}.
			
			\section*{Acknowledgement}
			The authors are grateful to the anonymous referees for their constructive comments and suggestions that have improved the content and presentation of this paper.

	\bibliography{ApostolosV2.bib}
	\bibliographystyle{abbrv}
\end{document}